\newtheorem{theorem}{Theorem}
\newtheorem{corollary}[theorem]{Corollary}
\newtheorem{definition}{Definition}
\newtheorem{lemma}[theorem]{Lemma}
\newtheorem{proposition}[theorem]{Proposition}
\numberwithin{equation}{section}
\numberwithin{theorem}{section}
\let\oldsqrt\sqrt
\def\sqrt{\mathpalette\DHLhksqrt}
\def\DHLhksqrt#1#2{%
\setbox0=\hbox{$#1\oldsqrt{#2\,}$}\dimen0=\ht0
\advance\dimen0-0.2\ht0
\setbox2=\hbox{\vrule height\ht0 depth -\dimen0}%
{\box0\lower0.4pt\box2}}
\renewcommand{\tilde}{\widetilde}
\newcommand{\wt}[1]{\widetilde{#1}}
\newcommand\pa{\partial}
\newcommand\eps\varepsilon
\renewcommand\epsilon\varepsilon
\newcommand\Ch{\operatorname{Ch}}
\newcommand\dvol{\operatorname{dvol}}
\newcommand\Id{\operatorname{Id}}
\renewcommand\Im{\operatorname{Im}}
\newcommand\Ind{\operatorname{Ind}}
\newcommand\Ker{\operatorname{Ker}}
\newcommand\reg{\operatorname{reg}}
\newcommand\sing{\operatorname{sing}}
\newcommand\paperintro%
\newcommand\paperbody%
\newcommand\bbC{\mathbb{C}}
\DeclareMathAlphabet{\mathpzc}{OT1}{pzc}{m}{it}
\DeclareMathOperator{\im}{im}      
\begin{document}
\pagestyle{myheadings}


\title{On analytic Todd classes of singular varieties}

\author{Francesco Bei and Paolo Piazza}
\address{Dipartimento di Matematica, Sapienza Universit\`a di Roma}
\email{bei@mat.uniroma1.it}
\address{Dipartimento di Matematica, Sapienza Universit\`a di Roma}
\email{piazza@mat.uniroma1.it}
\maketitle

\begin{abstract}
Let $(X,h)$ be a compact and irreducible Hermitian complex space. This paper is devoted to various questions concerning the analytic K-homology of $(X,h)$. In the fist part, assuming either $\dim(\sing(X))=0$ or $\dim(X)=2$, we show that the rolled-up operator of the minimal $L^2$-$\overline{\partial}$ complex, denoted here $\overline{\eth}_{\mathrm{rel}}$, induces a class in $K_0 (X)\equiv KK_0(C(X),\mathbb{C})$. A similar result, assuming $\dim(\sing(X))=0$,  is proved also for $\overline{\eth}_{\mathrm{abs}}$, the rolled-up operator of the maximal $L^2$-$\overline{\partial}$ complex. We then show that when $\dim(\sing(X))=0$  we have $[\overline{\eth}_{\mathrm{rel}}]=\pi_*[\overline{\eth}_M]$ with
 $\pi:M\rightarrow X$ an arbitrary resolution and with $[\overline{\eth}_M]\in K_0 (M)$ 
the analytic K-homology class induced by $\overline{\partial}+\overline{\partial}^t$ on $M$. In the second part of the paper
we focus on complex projective varieties $(V,h)$ endowed with the Fubini-Study metric. First, assuming $\dim(V)\leq 2$, we compare the Baum-Fulton-MacPherson K-homology class of $V$ with the class defined analytically through the rolled-up operator of any $L^2$-$\overline{\partial}$ complex. We show that there is no $L^2$-$\overline{\partial}$ complex on $(\reg(V),h)$ whose rolled-up operator induces a K-homology class that equals the Baum-Fulton-MacPherson class. Finally in the last part of the paper we prove that under suitable assumptions on $V$ the push-forward of  $[\overline{\eth}_{\mathrm{rel}}]$ 
in the K-homology of the classifying space of the fundamental group of $V$ is a  birational invariant.
\end{abstract}

\vspace{0.7 cm}

\noindent\textbf{Keywords}: Complex spaces, projective varieties, resolution of singularities, analytic K-homology, Baum-Fulton-MacPherson class, birational invariants.
\vspace{0.4 cm}

\noindent\textbf{Mathematics subject classification (2010)}:   32W05, 32C15, 32C18, 19L10, 14C40.

\tableofcontents

\section{Introduction}
Let $X$ be a smooth compact riemannian manifold without boundary and let $\eth$ be a Dirac-type operator acting on the sections
of a bundle of Clifford modules $E\to X$. We assume that $\eth$ is formally self-adjoint: $\eth=\eth^t$.
We let 
$\Gamma:= \pi_1 (X)$ and consider the associated universal covering $\widetilde{X}$. Finally, we consider the classifying space $B\Gamma$ 
and denote by  $r:X\to B\Gamma$ a classifying map for $\widetilde{X}\to X$. It is well known that 
$\eth: C^\infty (X,E)\subset L^2 (X,E)\to L^2 (X,E)$ is an essentially self-adjoint operator. Its unique self-adjoint extension,
still denoted $\eth$,
is Fredholm on its domain endowed with the graph norm. More generally, to $\eth$ we can associate an unbounded Kasparov $C(X)$-module and thus a class $[\eth]\in K_* (X):=KK_* (C(X),\mathbb{C})$, $*=\dim_{\mathbb{R}} X\;{\rm mod}\;2$. Notice that the existence of this  K-theory class  involves analytic properties 
of $\eth$ that are finer than the Fredholm property alone (for example, the compactness of the resolvant associated to $\eth$).\\
Three examples must be singled out because  of their deep connections with geometric and topological 
properties of $X$:

\begin{itemize}
\item the signature operator $\eth^{{\rm sign}}:\Omega^{\bullet} (X) \to \Omega^{\bullet} (X) $, when $X$ is oriented
\item the spin-Dirac operator $\eth^{{\rm spin}}: C^\infty (X,S)\to C^\infty(X,S)$, when $X$ is spin 
\item the operator $\overline{\partial}+\overline{\partial}^t: \Omega^{0,\bullet}(X) \to \Omega^{0,\bullet} (X)$, when $X\subset \mathbb{C}\mathbb{P}^n$ is a smooth projective variety
endowed with the restriction of a Hermitian metric on $\mathbb{C}\mathbb{P}^n$.
\end{itemize}
 Notice that because of the homotopy invariance of K-homology the classes 
 $[\eth^{{\rm sign}}]$, $[\eth^{{\rm spin}}]$, $[\overline{\partial}+\overline{\partial}^t]$ 
 associated to these three 
 operators in $K_* (X)$ are independent of the metric that we have used in order to define them.
 
 \medskip
 \noindent
 We call  $[\overline{\partial}+\overline{\partial}^t]\in K_0 (X)$  the {\it analytic Todd class} of the projective variety $X$.
 
 \medskip
 \noindent
 The Atiyah-Singer index theorem for the twisted versions of these operators can be used in order to
 establish the following  fundamental equalities in $H_* (X,\mathbb{Q})$:
 \begin{equation}\label{chern-ch}
 \Ch_* [\eth^{{\rm sign}} ]=L_* (X)\qquad \Ch_* [\eth^{{\rm spin}} ]=\widehat{A}_* (X)\qquad \Ch_* [ \overline{\partial}+\overline{\partial}^t]={\rm Td}_* (X)\,,
 \end{equation}
 with $L_* (X)$, $\widehat{A}_* (X)$ and ${\rm Td}_* (X)$ the homology classes obtained as the Poincar\'e duals
 of their well-known cohomology counterparts $L (X)$, $\widehat{A} (X)$ and ${\rm Td} (X)$ in $H^* (X,\mathbb{Q})$.\\
These three equalities can be complemented by three stability properties for the homology classes $r_* [\eth^{{\rm sign}} ]$,
$r_* [\eth^{{\rm spin}} ]$ and  $ r_* [ \overline{\partial}+\overline{\partial}^t]$ obtained by pushing forward to $K_* (B\Gamma)$:\\
assume that the fundamental group
$\Gamma$ satisfies the Strong Novikov Conjecture \footnote{this means that the assembly map $K_* (B\Gamma)\to K_* (C^*_r\Gamma)$ is rationally injective; the Strong Novikov Conjecture is satisfied by large classes of groups and 
no counterexamples to it are known}, then \cite{Kasparov-inventiones}, \cite{RosenbergPSC}, \cite{RosenbergPSCIII}
\cite{rosenberg-tams}
\begin{itemize}
\item $r_* [\eth^{{\rm sign}} ]\in H_* (B\Gamma,\mathbb{Q})$ is an oriented homotopy invariant of $X$
\item
 $r_* [\eth^{{\rm spin}} ]\in H_* (B\Gamma, \mathbb{Q})$ vanishes if the riemannian metric defining $\eth^{{\rm spin}}$
is of positive scalar curvature
\item  $r_* [ \overline{\partial}+\overline{\partial}^t]\in H_* (B\Gamma, \mathbb{Q})$ is a birational invariant of the smooth 
projective variety $X$.
\end{itemize}
In fact, for the third example, the one stating the birational invariance of  $r_* [ \overline{\partial}+\overline{\partial}^t]$
in $H_* (B\Gamma,\mathbb{Q})$, we do not need any assumption on $\Gamma$, see \cite{WBlock}, \cite{BraSchYou} and  \cite{Hilsumproj}.\\
The equalities \eqref{chern-ch} together with these stability results then imply the following fascinating statements:

\begin{itemize}
\item the numbers $\{\langle \alpha, r_* (L_* (X))\rangle,\,\alpha\in H^* (B\Gamma,\mathbb{Q})\}$ are oriented homotopy invariants
\item the numbers $\{\langle \alpha, r_* (\widehat{A}_* (X))\rangle,\,\alpha\in H^* (B\Gamma,\mathbb{Q})\}$ are topological
obstructions to the existence of a metric of positive scalar curvature on the spin manifold $X$
\item  the numbers $\{\langle \alpha, r_* ({\rm Td}_* (X))\rangle,\,\alpha\in H^* (B\Gamma,\mathbb{Q})\}$ are birational invariants
of the smooth projective variety $X$.
\end{itemize}
These numbers constitute  respectively the {\it higher signatures}, the {\it higher $\widehat{A}$-genera} and the {\it higher 
Todd genera} of $X$; using Poincar\'e duality they can also be expressed as
$$\int_X L(X)\wedge r^* \alpha\,,\qquad \int_X \widehat{A}(X)\wedge r^* \alpha\,,\qquad \int_X {\rm Td}(X)\wedge r^* \alpha
.$$

In the past  fourty  years 
 a great effort has been devoted to the study of the analytic, geometric and 
topological properties of Dirac operators on the regular part of a stratified pseudomanifold. References are too numerous 
to be recorder here.
One might therefore wonder which of the above properties can be extended to the stratified category.

For the signature operator associated to a wedge metric \footnote{also called an {\it iterated incomplete edge metric} or
an {\it iterated conic metric}}
these particular questions were tackled in \cite{ALMP11} under the assumption that $X$ is either a Witt space or, more generally, 
a Cheeger space \cite{ALMP13.1}: in these two cases topological L-classes had been previously defined by Goresky-MacPherson on Witt spaces  and by Banagl
on Cheeger spaces \footnote{Cheeger also proposed a definition of the homology
L-class of a Witt space, see \cite{Cheeger:Spec}}, we denote them $L_*^{{\rm GM}}$ and $L_*^{{\rm B}}$,  and one of the main results of \cite{ALMP11} 
 \cite{ALMP13.2} \cite{ALMP13.1} was the definition of a K-homology analytic signature class satisfying the analogue of \eqref{chern-ch} and with  stability properties similar to the one stated above for its
push-forward to $K_* (B\Gamma)$.
This established, in particular, the following result:\\ the  higher signatures of a Witt space
or of a Cheeger space,
$$\{\langle \alpha, r_* (L_*^{{\rm GM}} (X))\rangle,\,\alpha\in H^* (B\Gamma,\mathbb{Q})\}\qquad 
 \{\langle \alpha, r_* (L_*^{{\rm B}} (X))\rangle,\,\alpha\in H^* (B\Gamma,\mathbb{Q})\},$$
are {\it stratified} homotopy invariants.

 For the other two examples the situation is less satisfactory. For a spin stratified pseudomanifold, i.e. a pseudomanifold
 with all strata spin, there are interesting recent results by Albin and Gell-Redmann \cite{AlbinGell}: if the wedge metric induced along the
 links is of positive scalar curvature, then there is a well defined K-homology class and it is true that its push-forward
 in $K_* (B\Gamma)$ is an obstruction to the existence of a wedge metric of positive scalar curvature if $\Gamma$ satisfies the Strong Novikov Conjecture \footnote{these
 results are not explicitly stated in \cite{AlbinGell} but they follow from the analysis developed  there and from the arguments
 given in \cite{ALMP11}  for the signature operator}. A different approach to these results, using groupoids and iterated 
 $\Phi$-metrics, can be found in \cite{PiZe}. However,
 what is missing in the spin case is a {\it topological} definition of the homology $\widehat{A}$-class of a stratified 
 pseudomanifold.
 
 We finally come to the last
 example: a {\it singular} projective variety $X\subset \mathbb{C}\mathbb{P}^n$ endowed in its regular part with the Hermitian metric induced by a Hermitian
 metric $h$ in $\mathbb{C}\mathbb{P}^n$, for example the Fubini-Study metric, and the associated operator $ \overline{\partial}+\overline{\partial}^t$ on it. The analysis for this operator is notoriously more difficult than in the two preceeding
 examples: this is due to the non-product nature of the metric near the singular locus and to the fact that already in simple examples, such as singular algebraic curves, the operator $ \overline{\partial}+\overline{\partial}^t$ fails to be
 essentially self-adjoint. Still, many interesting papers have been 
 devoted to the analysis of $ \overline{\partial}+\overline{\partial}^t$ on singular projective varieties, albeit never
 in the generality one would like to consider. See for example \cite{Bei2017}, \cite{FraBei}, \cite{BPScurves}, \cite{FoxHaskellHodge}, \cite{GrieserLesch}, \cite{LiTian}, \cite{Nagase},  \cite{OvRu}, \cite{OvreVaAJM}, \cite{OvreVaIn}, \cite{PSCrelle}, \cite{Pardon},  \cite{PardonSternJAMS},   \cite{RUJFA}, \cite{JRupp}, \cite{RUIMRN} and many others. Among  the papers devoted  to  $ \overline{\partial}+\overline{\partial}^t$ on singular projective varieties few are centered around  the problem of defining  K-homology
 classes in $K_* (X)$ 
 and studying their properties (the plural is employed here  because, as we have already pointed out, there are a priori
 different self-adjoint extensions of $ \overline{\partial}+\overline{\partial}^t$). We refer the reader to the work of 
  Haskell \cite{Haskell-K},\cite{HaskellIndex} 
   and Fox-Haskell  \cite{FoxHaskellTodd}.
It is important to notice that 
 for a singular projective variety $X$ we {\it do have} a topologically defined
 homology Todd class: this is the Baum-Fulton-MacPherson class, denoted in this paper  by 
 ${\rm Td}_*^{{\rm BFM}} (X)\in H_* (X,\mathbb{Q})$, see \cite{BFMI}. This homology Todd class is in fact equal to
 the Chern character of a topological K-homology Todd class, denoted here 
 ${\rm Td}_K^{{\rm BFM}}(X)\in K_* (X)$. See \cite{BFMII}.
 We can finally state the purposes of the present work:
 
 \bigskip
 \noindent
 {\it the main goal of this article is to define analytic K-homology classes associated to  
 $ \overline{\partial}+\overline{\partial}^t$ 
 on a singular projective variety and to study their properties; in particular
 \begin{itemize}
\item   their relationship with the K-homology class of a Hironaka resolution; 
\item the birational invariance of their push-forward to $K_* (B\Gamma)$;
\item the relationship
 of their Chern character with the Baum-Fulton-MacPherson homology Todd class ${\rm Td}_*^{{\rm BFM}} (X)\in H_* (X,\mathbb{Q})$.
 \end{itemize}}
 
 \bigskip
 \noindent
 We shall now very briefly illustrate the main results of this work. We consider a singular projective variety
 endowed with the Hermitian metric induced by a Hermitian metric on $\mathbb{C}\mathbb{P}^n$  \footnote{in fact, in this first part of the article, we work more generally with complex hermitian spaces.}. We begin by defining two self-adjoint extensions of
 $ \overline{\partial}+\overline{\partial}^t$; these are obtained by considering respectively the rolled-up operator of the minimal and the maximal extension
 of the complex $\overline{\partial}_{0,q}$. We prove that if the singular locus $\sing (X)$ is zero dimensional then
 these extensions  define K-homology classes in $K_* (X)$ and we denote these classes respectively by
 $[\overline{\eth}_{{\rm rel}}]$ and $[\overline{\eth}_{{\rm abs}}]$. For complex surfaces we show the existence of  $[\overline{\eth}_{{\rm rel}}]$ 
 without any assumption on the singular locus. 
 All these results employ previous work of Bei and of {\O}vrelid-Ruppenthal \cite{Bei2017}, \cite{FraBei}, \cite{OvRu}. 
 Next, requiring again $\dim(\sing(X))=0$, we show that $[\overline{\eth}_{{\rm rel}}]$ is equal to $\pi_* [\overline{\partial}_M + 
 \overline{\partial}^t_M]$ with $\pi: M\to X$ any Hironaka resolution of $X$. In this part of the article we make use of results
of Peter Haskell \cite{Haskell-K} \footnote{For this part of the article we point out that similar (in fact stronger) results have been announced
by Hilsum in a seminar in Shanghai in 2017}. Using previous work of ours \cite{BeiPiazza} and of  Timmerscheidt \cite{Esnault}, we also give other descriptions
 of $\pi_* [\overline{\partial}_M + 
 \overline{\partial}^t_M]$, using Saper-type metrics and Poincar\'e-type metrics on $\reg(X)$.
 \\ We finally come to
 the relationship of our K-homology classes with the one defined by Baum-Fulton-MacPherson, ${\rm Td}_K^{{\rm BFM}}(X)\in K_* (X)$. We show that if $X$ is a singular  algebraic curve and if $\overline{D}$ is any closed
extension  of $\overline{\partial}:C^{\infty}_c(\reg(X))\rightarrow \Omega^{0,1}_c(\reg(X))$ then  
\begin{equation}\label{intro:not=}\Ind(\overline{D})\not=\chi(X,\mathcal{O}_X).
\end{equation} We prove that such extension $\overline{D}$
always defines a K-homology class $[\overline{D}+\overline{D}^*]$  in $K_* (X)$;
since $ \chi(X,\mathcal{O}_X)=p_*  ( {\rm Td}_K^{{\rm BFM}}(X) )$ and since
 $\Ind(\overline{D})=p_*[\overline{D}+\overline{D}^*]$
 with $p: X\to {\rm point}$, we see from \eqref{intro:not=} that 
  \begin{equation}\label{intro:not=K}
  [\overline{D}+\overline{D}^*]\not=  {\rm Td}_K^{{\rm BFM}}(X)\quad\text{in}\quad K_* (X).
  \end{equation}
Similarly, let $X$ be a normal complex surface with isolated singularities and with Hironaka resolution
$M$; assume that   $R^1 \pi_* \mathcal{O}_M$ is non-trivial. Then   we prove that
for any closed extension $(L^2\Omega^{0,q}(\reg(X),h),\overline{D}_{0,q})$ of the complex $(\Omega_c^{0,q}(\reg(X)),\overline{\partial}_{0,q})$ we have 
 \begin{equation}\label{intro:not=surface}
\chi_{2,\overline{D}_{0,q}}(\reg(X),h)\not= \chi(X,\mathcal{O}_X).
\end{equation}
Also in this case we show that the rolled-up operator associated to $\overline{D}_{0,q}$ defines
a K-homology class $[\overline{D}_0 + \overline{D}_0^*]\in K_* (X)$.
Using  \eqref{intro:not=surface} we see that  if $X$ is normal and $R^1\pi_*\mathcal{O}_M$ is non-trivial then
for any closed extension 
$\overline{D}_{0,q}$ 
of $(\Omega_c^{0,q}(\reg(X)),\overline{\partial}_{0,q})$ with associated K-homology class  $[\overline{D}_0 + \overline{D}_0^*]\in K_* (X)$ we have that 
\begin{equation}\label{intro:not=Ksurfaces}
  [\overline{D}_0+\overline{D}_0^*]\not=  {\rm Td}_K^{{\rm BFM}}(X)\quad\text{in}\quad K_* (X).
  \end{equation}
  One might then ask  whether there is a Hilbert complex, from the above results necessarily different from 
   $(L^2\Omega^{0,q}(\reg(X),h),\overline{D}_{0,q})$, with the property that its rolled up operator defines a K-homology class
  which realizes analytically the Baum-Fulton-MacPherson class. John Lott has recently constructed 
  such a complex.  See \cite{Lott}.

%
%
In the last part of the paper we specialize to singular projective varieties with only isolated {\em rational}
 singularities and we show that in this case 
 $$[\overline{\eth}_{{\rm rel}}]=  {\rm Td}_K^{{\rm BFM}}(X)\quad\text{in}\quad K_* (X).$$
Finally  the last section is devoted to singular projective varieties with only isolated singularities admitting a resolution that induces an isomorphism of fundamental groups. Interesting examples are provided by complex projective surfaces with rational singularities and  projective varieties with  log-terminal singularities, see \cite{Briesk} and \cite{Takayama}, respectively. We will show that in this setting the class  $r_* [\overline{\eth}_{{\rm rel}}]\in K_* (B\Gamma)$ is a birational invariant.

 
 \medskip
 \noindent
 {\bf Acknowledgements.} We are happy to thank Peter Haskell for interesting email correspondence. 
 We thank Marco Manetti, Kieran O'Grady, Luca Migliorini and Shoji Yokura for useful information related to the
 Baum-Fulton-MacPherson class. The first author wishes to thank Stefano Urbinati and Ernesto Mistretta for helpful discussions.   Part of this work was done during visits of the first author to 
 Sapienza Universit\`a di Roma; the  financial support of Sapienza is  gratefully  acknowledged. The first author wishes also to thank the university of Padova for financial support. Finally this work was partially performed within the framework of the LABEX MILYON (ANR-10-LABX-0070) of Universit\'e de Lyon, within the program ''Investissements d'Avenir'' (ANR-11-IDEX-0007) operated by the French National Research Agency (ANR).

\section{Background material}
This section provides a very concise summary of  the basic properties of the $L^2$-$\overline{\partial}$-cohomology and
of  the $L^2$-closures of the operators $\overline{\pa}_{p,q}$, $\overline{\pa}_p+\overline{\pa}_p^t$ and $\Delta_{\overline{\partial},p,q}$ over a possible incomplete Hermitian manifold. The proofs of the statements we are going to recall can be found in \cite{BruLe}. Let $(M,g)$ be a complex manifold of real dimension $2m$.  With $L^2\Omega^{p,q}(M,g)$ we denote the Hilbert space of $L^2$-$(p,q)$-forms. The Dolbeault operator acting on $(p,q)$-forms is labeled by $\overline{\partial}_{p,q}:\Omega^{p,q}(M)\rightarrow \Omega^{p,q+1}(M)$. When we look at $\overline{\partial}_{p,q}:L^2\Omega^{p,q}(M,g)\rightarrow L^2\Omega^{p,q+1}(M,g)$ as an unbounded and densely defined operator with domain $\Omega_c^{p,q}(M)$ we label by $\overline{\partial}_{p,q,\max/\min}:L^2\Omega^{p,q}(M,g)\rightarrow L^2\Omega^{p,q+1}(M,g)$ respectively its maximal and minimal extension. With $\overline{\partial}_{p,q}^t:\Omega^{p,q+1}_c(M)\rightarrow \Omega^{p,q}_c(M)$ we denote the formal adjoint of $\overline{\partial}_{p,q}$. For each bidegree $(p,q)$ we have  the Hodge-Kodaira Laplacian  defined as $$\Delta_{\overline{\partial},p,q}:\Omega^{p,q}_c(M)\rightarrow \Omega^{p,q}_c(M),\ \Delta_{\overline{\partial},p,q}:=\overline{\partial}_{p,q-1}\circ\overline{\partial}^t_{p,q-1}+\overline{\partial}_{p,q}^t\circ \overline{\partial}_{p,q}.$$
In the case of functions, that is $(p,q)=(0,0)$, we will simply write $\Delta_{\overline{\partial}}:C^{\infty}_c(M)\rightarrow C^{\infty}_c(M)$.
We recall now the definition of the  following two self-adjoint extensions of $\Delta_{\overline{\partial},p,q}$:
\begin{equation}
\label{asdf}
\overline{\partial}_{p,q-1,\max}\circ \overline{\partial}_{p,q-1,\min}^t+\overline{\partial}_{p,q,\min}^t\circ \overline{\partial}_{p,q,\max}:L^2\Omega^{p,q}(M,g)\rightarrow L^2\Omega^{p,q}(M,g)
\end{equation} 
and 
\begin{equation}
\label{basdf}
\overline{\partial}_{p,q-1,\min}\circ \overline{\partial}_{p,q-1,\max}^t+\overline{\partial}_{p,q,\max}^t\circ \overline{\partial}_{p,q,\min}:L^2\Omega^{p,q}(M,g)\rightarrow L^2\Omega^{p,q}(M,g)
\end{equation}
called respectively the absolute and the relative extension. The operator \eqref{asdf}, the absolute extension, is labeled in general with  $\Delta_{\overline{\partial},p,q,\text{abs}}$ and its domain is given by $\mathcal{D}(\Delta_{\overline{\partial},p,q,\text{abs}})=$ $\{\omega\in \mathcal{D}(\overline{\partial}_{p,q,\max})\cap \mathcal{D}(\overline{\partial}_{p,q-1,\min}^t):\overline{\partial}_{p,q,\max}\omega \in \mathcal{D}(\overline{\partial}^t_{p,q,\min})\ \text{and}\ \overline{\partial}_{p,q-1,\min}^t\omega \in \mathcal{D}(\overline{\partial}_{p,q-1,\max})\}.$
The operator \eqref{basdf}, the relative extension,  is labeled in general with  $\Delta_{\overline{\partial},p,q,\text{rel}}$ and its domain is given by $\mathcal{D}(\Delta_{\overline{\partial},p,q,\text{rel}})=\{\omega\in \mathcal{D}(\overline{\partial}_{p,q,\min})\cap \mathcal{D}(\overline{\partial}_{p,q-1,\max}^t):\overline{\partial}_{p,q,\min}\omega \in \mathcal{D}(\overline{\partial}^t_{p,q,\max})\ \text{and}\ \overline{\partial}_{p,q-1,\max}^t\omega \in \mathcal{D}(\overline{\partial}_{p,q-1,\min})\}.$
 The kernels of  $\Delta_{\overline{\partial},p,q,\text{abs}}$ and $\Delta_{\overline{\partial},p,q,\text{rel}}$ are denoted with
$\mathcal{H}^{p,q}_{\overline{\partial},\text{abs}}(M,g)$ and $\mathcal{H}^{p,q}_{\overline{\partial},\text{rel}}(M,g)$ respectively and they satisfies $\mathcal{H}^{p,q}_{\overline{\partial},\text{abs}}(M,g)=\ker(\overline{\partial}_{p,q,\max})\cap \ker(\overline{\partial}^t_{p,q-1,\min})$  and  $\mathcal{H}^{p,q}_{\overline{\partial},\text{rel}}(M,g)=\ker(\overline{\partial}_{p,q,\min})\cap \ker(\overline{\partial}^t_{p,q-1,\max})$. Consider now the Hodge-Dolbeault operator $\overline{\partial}_{p}+\overline{\partial}^t_{p}:\Omega_c^{p,\bullet}(M)\rightarrow \Omega^{p,\bullet}_c(M)$ where with $\Omega^{p,\bullet}_c(M)$ we mean $\bigoplus_{q=0}^m\Omega_c^{p,q}(M)$. We can define two self-adjoint extensions  of $\overline{\partial}_p+\overline{\partial}^t_p$ taking  
\begin{equation}
\label{twoself}
\overline{\partial}_{p,\max}+\overline{\partial}^t_{p,\min}:L^2\Omega^{p,\bullet}(M,g)\rightarrow L^2\Omega^{p,\bullet}(M,g)
\end{equation}
\begin{equation} 
\label{twoselfs}
\overline{\partial}_{p,\min}+\overline{\partial}^t_{p,\max}:L^2\Omega^{p,\bullet}(M,g)\rightarrow L^2\Omega^{p,\bullet}(M,g)
\end{equation}
where clearly $L^2\Omega^{p,\bullet}(M,g)=\bigoplus_{q=0}^mL^2\Omega^{p,q}(M,g)$. The domain of $\overline{\partial}_{p,\max}+\overline{\partial}^t_{p,\min}$ is given by $\mathcal{D}(\overline{\partial}_{p,\max})\cap\mathcal{D}(\overline{\partial}^t_{p,\min})$ where $\mathcal{D}(\overline{\partial}_{p,\max})=\bigoplus_{q=0}^m\mathcal{D}(\overline{\partial}_{p,q,\max})$ and $\mathcal{D}(\overline{\partial}^t_{p,\min})=\bigoplus_{q=0}^m\mathcal{D}(\overline{\partial}^t_{p,q,\min})$. Analogously the domain of $\overline{\partial}_{p,\min}+\overline{\partial}^t_{p,\max}$ is given by  $\mathcal{D}(\overline{\partial}_{p,\min})\cap\mathcal{D}(\overline{\partial}^t_{p,\max})$ where $\mathcal{D}(\overline{\partial}_{p,\min})=\bigoplus_{q=0}^m\mathcal{D}(\overline{\partial}_{p,q,\min})$ and $\mathcal{D}(\overline{\partial}^t_{p,\max})=\bigoplus_{q=0}^m\mathcal{D}(\overline{\partial}^t_{p,q,\max})$. In particular we have: 
$$\ker(\overline{\partial}_{p,\max/\min}+\overline{\partial}^t_{p,\min/\max})=\bigoplus_{q=0}^m\ker(\overline{\partial}_{p,q,\max/\min})\cap \ker (\overline{\partial}^t_{p,q-1,\min/\max})=\bigoplus_{q=0}^m\mathcal{H}^{p,q}_{\overline{\partial},\text{abs}/\text{rel}}(M,g).$$
Furthermore we recall that the maximal and the minimal $L^2$-$\overline{\partial}$-cohomology of $(M,g)$ are  defined respectively as 
\begin{equation}
\label{chimar}
H^{p,q}_{2,\overline{\partial}_{\max}}(M,g):=\frac{\ker(\overline{\partial}_{p,q,\max})}{\im(\overline{\partial}_{p,q-1,\max})}\ \text{and}\ H^{p,q}_{2,\overline{\partial}_{\min}}(M,g):=\frac{\ker(\overline{\partial}_{p,q,\min})}{\im(\overline{\partial}_{p,q-1,\min})}.
\end{equation}
In particular if $H^{p,q}_{2,\overline{\partial}_{\max}}(M,g)$ is finite dimensional then $\im(\overline{\partial}_{p,q-1,\max})$ is closed and analogously if $H^{p,q}_{2,\overline{\partial}_{\min}}(M,g)$ is finite dimensional then $\im(\overline{\partial}_{p,q-1,\min})$ is closed. If this is the case then we have $H^{p,q}_{2,\overline{\partial}_{\max/\min}}(M,g)\cong \mathcal{H}^{p,q}_{\overline{\partial},\text{abs}/\text{rel}}(M,g).$ Furthermore we recall that if $(M,g)$ is complete  then $\Delta_{\overline{\pa},p,q}:\Omega_c^{p,q}(M)\rightarrow \Omega_c^{p,q}(M)$ and $\overline{\pa}_p+\overline{\pa}_p^t:\Omega_c^{p,\bullet}(M)\rightarrow \Omega_c^{p,\bullet}(M)$ are essentially self-adjoint operators when considered as unbounded and densely defined operators acting on $L^2\Omega^{p,q}(M,g)$ and  $L^2\Omega^{p,\bullet}(M,g)$ respectively. As it is well known this in turn implies that $\overline{\pa}_{p,q,\max}=\overline{\partial}_{p,q,\min}$. Henceforth whenever $(M,g)$ is a complete Hermitian manifold we will simply label   with $\Delta_{\overline{\pa},p,q}:L^2\Omega^{p,q}(M,g)\rightarrow L^2\Omega^{p,q}(M,g)$, $\overline{\pa}_p+\overline{\pa}_p^t:L^2\Omega^{p,\bullet}(M,g)\rightarrow L^2\Omega^{p,\bullet}(M,g)$ and $\overline{\partial}_{p,q}:L^2\Omega^{p,q}(M,g)\rightarrow L^2\Omega^{p,q+1}(M,g)$ the unique closed extension of  $\Delta_{\overline{\pa},p,q}:\Omega_c^{p,q}(M)\rightarrow \Omega_c^{p,q}(M)$,  $\overline{\pa}_p+\overline{\pa}_p^t:\Omega_c^{p,\bullet}(M)\rightarrow \Omega_c^{p,\bullet}(M)$ and  $\overline{\pa}_{p,q}:\Omega_c^{p,q}(M)\rightarrow \Omega_c^{p,q+1}(M)$ respectively. Finally we conclude with a note about the notation that will be used through the paper. When $(p,q)=(0,0)$ we will simply write $\overline{\pa}$, $\overline{\pa}_{\max/\min}$, $\Delta_{\overline{\pa}}$ and $\Delta_{\overline{\pa},\text{rel}/\text{abs}}$ instead of $\overline{\pa}_{0,0}$, $\overline{\pa}_{0,0\max/\min}$, $\Delta_{\overline{\pa},0,0}$ and $\Delta_{\overline{\pa},0,0,\text{rel}/\text{abs}}$ respectively.

\section{Analytic K-homology classes for complex Hermitian spaces}

We start with the following proposition.
\begin{proposition}
\label{bounded}
Let $(M,g)$ be a possibly incomplete Riemannian manifold of dimension $m$.  Let $f\in L^{\infty}(M,g)$ such that $df\in L^{\infty}\Omega^{1}(M,g)$ where $df$ stands for the distributional differential of $f$. Then the following properties hold true:
\begin{itemize}
\item If $\omega\in \mathcal{D}(d_{k,\max})$ then $f\omega\in \mathcal{D}(d_{k,\max})$ and $d_{k,\max}(f\omega)=fd_{k,\max}\omega+df\wedge\omega$. 
\item If $\omega\in \mathcal{D}(d_{k,\min})$ then  $f\omega\in \mathcal{D}(d_{k,\min})$ and $d_{k,\min}(f\omega)=fd_{k,\min}\omega+df\wedge\omega$.
\end{itemize}
Assume  now that  $(M,g)$ is a possibly incomplete complex Hermitian manifold of complex dimension $m$. Let $f\in L^{\infty}(M,g)$ such that $\overline{\partial}f\in L^{\infty}\Omega^{0,1}(M,g)$ where as above $\overline{\partial}f$ is understood in the distributional sense.
Then the following properties hold true:
\begin{itemize}
\item If  $\omega\in \mathcal{D}(\overline{\partial}_{0,q,\max})$  then  $f\omega\in \mathcal{D}(\overline{\partial}_{0,q,\max})$ and $\overline{\partial}_{0,q,\max}(f\omega)=f\overline{\partial}_{0,q,\max}\omega+\overline{\partial}f\wedge\omega$.
\item $\omega\in \mathcal{D}(\overline{\partial}_{0,q,\min})$  then  $f\omega\in \mathcal{D}(\overline{\partial}_{0,q,\min})$ and $\overline{\partial}_{0,q,\min}(f\omega)=f\overline{\partial}_{0,q,\min}\omega+\overline{\partial}f\wedge\omega$.
\end{itemize}
Finally completely analogous results hold if we replace $d_k$ with $d_k^t$ and in the complex case $\overline{\partial}_{0,q}$ with $\overline{\partial}_{0,q}^t$ where $d_k^t$ and $\overline{\partial}_{0,q}^t$ are respectively the formal adjoint of $d_k$ and $\overline{\partial}_{0,q}$.
\end{proposition}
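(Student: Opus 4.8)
The plan is to treat the maximal and minimal extensions by two slightly different routes, both reducing to the classical Leibniz rule for a \emph{smooth} function together with a mollification argument, and then to observe that the formal-adjoint statements follow by the same scheme. I will spell out the case of $\overline{\partial}_{0,q}$; the case of $d_k$ is identical (indeed a little easier, since there the full differential $df$, and not merely its $(0,1)$-part, is assumed bounded), and the two minimal statements are handled by the approximation argument below.

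For the maximal extension I would use the distributional characterization: $u\in\mathcal{D}(\overline{\partial}_{0,q,\max})$ precisely when there is $w\in L^2\Omega^{0,q+1}(M,g)$ with $\langle u,\overline{\partial}_{0,q}^t\phi\rangle=\langle w,\phi\rangle$ for every $\phi\in\Omega_c^{0,q+1}(M)$, in which case $\overline{\partial}_{0,q,\max}u=w$. First note that the candidate $w:=f\overline{\partial}_{0,q,\max}\omega+\overline{\partial}f\wedge\omega$ lies in $L^2$: the bound $f\in L^\infty$ controls the first term and $\overline{\partial}f\in L^\infty\Omega^{0,1}(M,g)$ the second, against $\overline{\partial}_{0,q,\max}\omega,\omega\in L^2$. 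It then suffices to verify $\langle f\omega,\overline{\partial}_{0,q}^t\phi\rangle=\langle w,\phi\rangle$ for each fixed test form $\phi$. Choose mollifications $f_\varepsilon\in C^\infty(M)$, built inside holomorphic coordinate charts so that $f_\varepsilon\to f$ and $\overline{\partial}f_\varepsilon\to\overline{\partial}f$ in $L^2_{\mathrm{loc}}$ (here I only ever invoke the hypothesis on $\overline{\partial}f$, never on $\partial f$). For smooth $f_\varepsilon$ the identity $\langle f_\varepsilon\omega,\overline{\partial}_{0,q}^t\phi\rangle=\langle f_\varepsilon\overline{\partial}_{0,q,\max}\omega+\overline{\partial}f_\varepsilon\wedge\omega,\phi\rangle$ is the classical smooth Leibniz rule: one moves $\overline{f_\varepsilon}$ across the pairing using that $\overline{f_\varepsilon}\phi\in\Omega_c^{0,q+1}(M)$ is again a legitimate test form and that $\omega\in\mathcal{D}(\overline{\partial}_{0,q,\max})$. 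Since $\phi$ and $\overline{\partial}_{0,q}^t\phi$ are bounded with compact support, letting $\varepsilon\to0$ and using only the $L^2_{\mathrm{loc}}$ convergence of $f_\varepsilon$ and $\overline{\partial}f_\varepsilon$ on $\operatorname{supp}\phi$ yields the desired identity; hence $f\omega\in\mathcal{D}(\overline{\partial}_{0,q,\max})$ with the stated formula.

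For the minimal extension I would instead approximate $\omega$. Pick $\omega_n\in\Omega_c^{0,q}(M)$ with $\omega_n\to\omega$ and $\overline{\partial}_{0,q}\omega_n\to\overline{\partial}_{0,q,\min}\omega$ in $L^2$. For fixed $n$, mollifying $f$ inside charts gives $f_\varepsilon\omega_n\in\Omega_c^{0,q}(M)$ with $\overline{\partial}(f_\varepsilon\omega_n)=f_\varepsilon\overline{\partial}\omega_n+\overline{\partial}f_\varepsilon\wedge\omega_n$; as $\varepsilon\to0$ all terms converge in $L^2$ on the compact set $\operatorname{supp}\omega_n$, so by closedness of $\overline{\partial}_{0,q,\min}$ one gets $f\omega_n\in\mathcal{D}(\overline{\partial}_{0,q,\min})$ with $\overline{\partial}_{0,q,\min}(f\omega_n)=f\overline{\partial}_{0,q}\omega_n+\overline{\partial}f\wedge\omega_n$. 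Now let $n\to\infty$: using $f\in L^\infty$ and $\overline{\partial}f\in L^\infty$ one checks $f\omega_n\to f\omega$ and $f\overline{\partial}_{0,q}\omega_n+\overline{\partial}f\wedge\omega_n\to f\overline{\partial}_{0,q,\min}\omega+\overline{\partial}f\wedge\omega$ in $L^2$, and closedness of $\overline{\partial}_{0,q,\min}$ gives the claim.

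The formal-adjoint statements follow verbatim from the same two arguments: the only change is that the smooth Leibniz rule for $\overline{\partial}_{0,q}^t$ (resp.\ $d_k^t$) produces, in place of $\overline{\partial}f\wedge\omega$, a zeroth-order term given by contraction of $\omega$ with the metric dual of $\overline{\partial}f$ (resp.\ of $df$), which is again controlled in $L^2$ by the same hypotheses. The step I expect to be most delicate is the verification of the smooth base case together with the bookkeeping ensuring that in the $\overline{\partial}$-setting only $\overline{\partial}f$, and never the full differential $df$, enters; this is precisely why the mollifiers must be constructed chart-by-chart in holomorphic coordinates, so that mollification commutes with $\overline{\partial}$ and every estimate is driven solely by $\|f\|_{L^\infty}$ and $\|\overline{\partial}f\|_{L^\infty}$.
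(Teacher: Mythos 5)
Your treatment of the four main statements (for $d_{k,\max/\min}$ and $\overline{\partial}_{0,q,\max/\min}$) is correct and complete: the distributional characterization of the maximal domain plus chart-wise mollification, and the two-step approximation plus closedness for the minimal extension, are exactly the right tools, and your conjugation bookkeeping for the maximal case is sound --- moving $f_\varepsilon$ across the pairing produces $\overline{f_\varepsilon}$, and the identity $\overline{f_\varepsilon}\,\overline{\partial}{}^{t}\phi=\overline{\partial}{}^{t}(\overline{f_\varepsilon}\phi)+(\overline{\partial}f_\varepsilon\wedge)^{*}\phi$ indeed involves only $\overline{\partial}f_\varepsilon$. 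Note that the paper does not actually carry out any of this: its proof is a citation of Prop.~2.3 of \cite{SympBei} for the real case, together with the assertion that the complex and adjoint cases follow ``by the same strategy''. So your proposal supplies, correctly, the argument the paper outsources.

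There is, however, a genuine error in your final paragraph, and it sits exactly at the step you flagged as delicate. For smooth $f$ the Leibniz rule for the formal adjoint is
\[
\overline{\partial}{}^{t}(f\omega)=f\,\overline{\partial}{}^{t}\omega-(\overline{\partial}\bar f\wedge)^{*}\omega ,
\]
not $f\,\overline{\partial}{}^{t}\omega$ plus a contraction against the metric dual of $\overline{\partial}f$: dualizing forces the conjugate $\bar f$, and $\overline{\partial}\bar f=\overline{\partial f}$ is governed by $\partial f$, not by $\overline{\partial}f$. In the flat model on $\mathbb{C}$ one has $\overline{\partial}{}^{t}(v\,d\bar z)=-2\,\partial_z v$, so $[\overline{\partial}{}^{t},f](v\,d\bar z)=-2\,(\partial_z f)\,v$. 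Consequently the hypotheses $f\in L^{\infty}$, $\overline{\partial}f\in L^{\infty}$ do \emph{not} control this term for complex-valued $f$, and the adjoint statement genuinely fails under them: on the unit disc with the flat metric take $f=\sqrt{1-z}$ (bounded, holomorphic, so $\overline{\partial}f=0$) and $\omega=(1-\bar z)^{-1/2}\,d\bar z$. Then $\omega\in L^{2}$ and $\overline{\partial}{}^{t}\omega=0$, so $\omega\in\mathcal{D}(\overline{\partial}{}^{t}_{\max})$, but $\overline{\partial}{}^{t}(f\omega)=-2\,\partial_z\bigl((1-z)^{1/2}(1-\bar z)^{-1/2}\bigr)$ has modulus $|1-z|^{-1}\notin L^{2}$, hence $f\omega\notin\mathcal{D}(\overline{\partial}{}^{t}_{\max})$. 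The clean repair comes by Hilbert-space duality from what you already proved: since $\overline{\partial}{}^{t}_{0,q,\max}=(\overline{\partial}_{0,q,\min})^{*}$, $\overline{\partial}{}^{t}_{0,q,\min}=(\overline{\partial}_{0,q,\max})^{*}$ and $(M_f)^{*}=M_{\bar f}$, your results show that $M_{\bar f}$ preserves $\mathcal{D}(\overline{\partial}{}^{t}_{0,q,\max/\min})$ with bounded commutator $-(\overline{\partial}f\wedge)^{*}$; equivalently, $M_f$ does so provided $\overline{\partial}\bar f\in L^{\infty}$ (e.g.\ $f$ real-valued, or $df\in L^{\infty}$). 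For $d_k^{t}$ there is no issue, since $\|d\bar f\|_{\infty}=\|df\|_{\infty}$. To be fair, the paper's own phrase ``completely analogous results hold'' carries the same imprecision, and it is harmless downstream because every function fed into this proposition (those of $S(X)$ and $S_c(X)$) has full differential bounded by Prop.~\ref{bbb}; but your proof, as written, asserts the wrong commutator and the wrong sufficiency claim, so the adjoint case does not follow ``verbatim''.
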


\begin{proof}
The first two statements above are a particular case of \cite{SympBei} Prop. 2.3. In the complex setting, the corresponding  statements  for $\overline{\partial}$,  follow by applying the same strategy, with the obvious modifications, that is used in the proof of Prop. 2.3 in \cite{SympBei}. Finally again the same strategy can be used to give  a proof if we replace $d_k$ with $d_k^t$ and, in the complex case, $\overline{\partial}_{0,q}$ with $\overline{\partial}_{0,q}^t$.
\end{proof}

We proceed very briefly by recalling some  notions of complex analytic geometry. Complex spaces are a classical topic in complex geometry and we refer to \cite{Fischer} and \cite{GraRe} for definitions and properties. Consider now a reduced complex space $X$ and let $\mathcal{O}_X$ be the sheaf of holomorphic functions on $X$. The sheaf of weakly holomorphic function on $X$, labeled by $\tilde{\mathcal{O}}_X$, is the sheaf the assigns to each open subset $U$ of $X$ the space of functions $f:\reg(U)\rightarrow \mathbb{C}$ that are  locally bounded on $U$ and holomorphic on $\reg(U)$. A point $p\in X$ is said normal if $\tilde{\mathcal{O}}_{X,p}=\mathcal{O}_{X,p}$. $X$ is normal if $\tilde{\mathcal{O}}_{X,p}=\mathcal{O}_{X,p}$ for any $p\in X$. In this case $\sing(X)$ has complex codimension at least 2.
If $X$ is not normal then there exists a  normalization of $X$, $\nu: \tilde{X}\rightarrow X$. Here we skip the definition and we refer to \cite{GraRe} and \cite{Fischer} for  precise statements. For our purpose it suffices to recall that $\tilde{X}$ is a normal complex space, $\nu:\tilde{X}\rightarrow X$ is a finite and surjective holomorphic map such that $\nu_*\mathcal{O}_{\tilde{X}}=\tilde{\mathcal{O}}_X$ and $\nu|_A:A\rightarrow B$ is a biholomorphism, where $A$ is open and dense in $\tilde{X}$, $B$ is open and dense in $X$ and $X\setminus B$ is the subset of $X$  made by non-normal points. Moreover  we recall that an irreducible complex space $X$ is a reduced complex space such that $\reg(X)$, the regular part of $X$, is connected.\\ A paracompact  and reduced complex space $X$ is said \emph{Hermitian} if  the regular part of $X$ carries a Hermitian metric $h$  such that for every point $p\in X$ there exists an open neighborhood $U\ni p$ in $X$, a proper holomorphic embedding of $U$ into a polydisc $\phi: U \rightarrow \mathbb{D}^N\subset \mathbb{C}^N$ and a Hermitian metric $g$ on $\mathbb{D}^N$ such that $(\phi|_{\reg(U)})^*g=h$, see for instance \cite{Takeo} or \cite{JRupp}. In this case we will write $(X,h)$ and with a little abuse of language we will say that $h$ is a \emph{Hermitian metric on $X$}. Natural examples of Hermitian complex spaces are provided by  analytic sub-varieties of  complex Hermitian manifolds endowed with the metric induced by the Hermitian  metric of the ambient space. In particular, within this class of examples, we have any complex projective variety $V\subset \mathbb{C}\mathbb{P}^n$ endowed  with the K\"ahler metric induced by the Fubini-Study metric of $\mathbb{C}\mathbb{P}^n$. We point out explicitly that all the Hermitian metrics on $X$ belong to the same quasi-isometry class. This follows easily by the lifting lemma, see \cite{SingDef} Remark 1.30.1 page 37.\\ Moreover, in order to state the next results, we spend a few words concerning  resolution of singularities. We refer to the celebrated work of Hironaka \cite{Hiro} and to \cite{BieMil} for a thorough discussion  on this subject. Here we simply recall what is strictly necessary for our purposes.\\ Let $X$ be a compact irreducible complex space. Then there exists a compact complex manifold $M$, a divisor with only normal crossings $D\subset M$ and a surjective holomorphic map $\pi:M\rightarrow X$ such that $\pi^{-1}(\sing(X))=D$ and 
\begin{equation}
\label{hiro}
\pi|_{M\setminus D}: M\setminus D\longrightarrow X\setminus \sing(X)
\end{equation}
is a biholomorphism.  We have now the following definition.

\begin{definition}
\label{smooth}
Let $X$ be a compact and irreducible complex space. Let $f:X\rightarrow \mathbb{C}$ be a continuous function. We will say that $f$ is smooth if for any point $p\in X$ there exists an open neighborhood $U$ of $p$, a holomorphic embedding of $\phi:U\rightarrow \mathbb{C}^N$ for some $N$, an open subset $W\subset \mathbb{C}^N$ with $\phi(U)\subset W$ and a smooth function $\beta:W \rightarrow \mathbb{C}^N$ such that $\beta\circ \phi=f|_U$.  
\end{definition}
We will label with $S(X)$ the set of smooth functions on $X$. Clearly if $f\in S(X)$ then $f|_{\reg(X)}\in C^{\infty}(\reg(X))$.

\begin{proposition}
\label{dense}
Let $X$ be a compact and irreducible complex space. Then $S(X)$ is a dense $*$-subalgebra of $C(X)$.
\end{proposition}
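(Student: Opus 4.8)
The plan is to verify the algebraic and $*$-closure properties directly from Definition \ref{smooth}, and then to deduce density from the complex Stone--Weierstrass theorem, using that $X$ is compact and Hausdorff.

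First, for the $*$-subalgebra structure, I would fix $f,g\in S(X)$ and a point $p\in X$. By Definition \ref{smooth} there are neighbourhoods $U_f,U_g$ of $p$, holomorphic embeddings $\phi_f\colon U_f\to\mathbb{C}^{N_f}$ and $\phi_g\colon U_g\to\mathbb{C}^{N_g}$, and smooth functions $\beta_f,\beta_g$ on ambient open sets with $\beta_f\circ\phi_f=f$ and $\beta_g\circ\phi_g=g$ on $U_f,U_g$. The embeddings need not agree, so on $U:=U_f\cap U_g$ I would pass to the diagonal map $\phi:=(\phi_f,\phi_g)\colon U\to\mathbb{C}^{N_f}\times\mathbb{C}^{N_g}$. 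Since $\phi_f$ is already an embedding, $\phi$ is injective, and its image is the graph of the holomorphic map $\phi_g\circ(\phi_f|_U)^{-1}$ over the analytic set $\phi_f(U)$; hence $\phi$ is again a holomorphic embedding. Writing $\pi_1,\pi_2$ for the coordinate projections, the functions $\beta_f\circ\pi_1$ and $\beta_g\circ\pi_2$ are smooth on an ambient open set and pull back under $\phi$ to $f$ and $g$. Consequently $f+g$, $fg$, $\lambda f$ and $\overline{f}=\overline{\beta_f\circ\pi_1}\circ\phi$ are all of the form (ambient smooth)$\circ\phi$ near $p$. As $p$ is arbitrary, $S(X)$ is closed under sums, products, scalar multiples and complex conjugation; taking $\beta$ constant shows it contains the constants, so $S(X)$ is a unital $*$-subalgebra of $C(X)$.

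It then remains to prove density, and by the complex Stone--Weierstrass theorem it suffices to show that the conjugation-closed subalgebra $S(X)$ separates the points of $X$. Given $p\neq q$, since $X$ is a locally compact Hausdorff space with arbitrarily small embeddable neighbourhoods of $p$, I would choose a neighbourhood $U\ni p$ carrying a holomorphic embedding $\phi\colon U\to W\subset\mathbb{C}^N$ with $q\notin\overline{U}$. Pick an ambient bump function $\tilde\chi\in C^\infty(\mathbb{C}^N)$ with $\tilde\chi(\phi(p))=1$ and $\operatorname{supp}\tilde\chi$ so small that $\phi^{-1}(\operatorname{supp}\tilde\chi)$ is a compact subset of $U$. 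Then $\chi:=\tilde\chi\circ\phi$, extended by $0$ outside $U$, lies in $S(X)$: on $U$ it equals $\tilde\chi\circ\phi$, while $X\setminus\phi^{-1}(\operatorname{supp}\tilde\chi)$ is open and $\chi\equiv0$ there (take $\beta\equiv0$). Since $\chi(p)=1$ and $\chi(q)=0$, this $\chi$ separates $p$ and $q$, and density follows.

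The main obstacle I anticipate is the bookkeeping in the two gluing arguments: verifying that the diagonal map $(\phi_f,\phi_g)$ is genuinely a holomorphic embedding (i.e.\ that the graph of a holomorphic map over an analytic set is analytic and carried isomorphically by the projection), and checking that the zero-extension of $\tilde\chi\circ\phi$ is smooth in the sense of Definition \ref{smooth} at points of $X\setminus U$ and near $\partial(\operatorname{supp}\chi)$. Both reduce to the fact that a local holomorphic embedding is a homeomorphism onto a closed analytic subset of its ambient open set, so that $\phi^{-1}(\operatorname{supp}\tilde\chi)$ is compact and closed in $X$; once this is in hand, the smoothness of all the composite functions is immediate.
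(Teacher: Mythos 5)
Your proof is correct and follows essentially the same route as the paper: Stone--Weierstrass applied to the unital, conjugation-closed algebra $S(X)$, with point separation achieved by pulling back an ambient smooth bump function through a local holomorphic embedding near $p$ and extending by zero. The only difference is that you spell out the $*$-subalgebra property via the diagonal-embedding/graph argument, a step the paper dismisses as clear; your justification of it is valid.
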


\begin{proof}
Clearly $A(X)\neq \emptyset$ because $\mathbb{C}\subset S(X)$. Moreover it is also clear that $S(X)$ is a $*$-subalgebra of $C(X)$. In order to conclude that $S(X)$ is dense in $C(X)$ we want to use the Stone-Weierstraas theorem. Therefore we are left to prove that given two points $p,q\in X$ with $p\neq q$ there exists a function $f\in S(X)$ such that $f(p)\neq f(q)$. Let $U$ be an open neighborhood of $p$ such that $q\notin \overline{U}$ and such that there exists a holomorphic embedding $\phi:U\rightarrow \mathbb{C}^N$ for some $N$. Let $V$ be another open neighborhood of $p$ such that $\overline{V}\subset U$. Let $A\subset \mathbb{C}^N$ be an open subset such that $A\cap \phi(U)=\phi(V)$.
Let $\alpha\in C^{\infty}_c(A)$ such that $\alpha(\phi(p))=1$. Let $f:=\alpha\circ \phi$ and let 
$$\label{pppp}
\tilde{f}:= \left\{
\begin{array}{ll}
f\ \text{on}\ V\\
0\ \text{on}\ U\setminus V
\end{array}
\right.
$$ 
By construction it is clear that  $f\in S(X)$ and that  $f(p)=1$ and $f(q)=0$. The proof is thus complete.
\end{proof}

\begin{proposition}
\label{bbb}
Let $(X,h)$ be a compact and irreducible Hermitian complex space. Let $f\in S(X)$. Then $d(f|_{\reg(X)})\in L^{\infty}\Omega^1(\reg(X),h)$ and  $\overline{\pa}(f|_{\reg(X)})\in L^{\infty}\Omega^{0,1}(\reg(X),h)$.
\end{proposition}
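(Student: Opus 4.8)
The plan is to turn the global $L^\infty$ statement into finitely many local estimates, and on each local piece to exploit that the metric $h$ is \emph{itself} the pullback of a smooth ambient metric under a holomorphic embedding, so that the pullback of any bounded smooth form is automatically bounded. First I would use that $X$ is compact: I cover $X$ by finitely many open sets $U_i$ together with shrinkings $U_i'\ni p_i$ with $\overline{U_i'}\subset U_i$ compact and $X=\bigcup_i U_i'$. Since $L^\infty$ is a local and finite-cover condition, it suffices to bound $|d(f|_{\reg(X)})|_h$ and $|\overline{\partial}(f|_{\reg(X)})|_h$ on each $\reg(U_i')$ by a constant. On a fixed such $U=U_i$ I have two pieces of data: by the definition of a Hermitian complex space a (proper) holomorphic embedding $\varphi\colon U\to\mathbb{D}^{N'}\subset\mathbb{C}^{N'}$ and a smooth Hermitian metric $g$ on $\mathbb{D}^{N'}$ with $(\varphi|_{\reg(U)})^*g=h$; and by Definition \ref{smooth}, a holomorphic embedding $\phi\colon U\to\mathbb{C}^{N}$, an open $W\supset\phi(U)$ and a smooth $\beta\colon W\to\mathbb{C}$ with $\beta\circ\phi=f|_U$.

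The easy half of the argument is the estimate once $f$ is expressed through the \emph{metric-defining} embedding $\varphi$. Suppose I can write $f|_U=\gamma\circ\varphi$ with $\gamma$ smooth on an open neighborhood of $\varphi(\overline{U'})$ in $\mathbb{C}^{N'}$. Since $\varphi$ is holomorphic it preserves bidegree and commutes with $d$, so on $\reg(U)$ one has $d(f|_{\reg(U)})=(\varphi|_{\reg(U)})^*(d\gamma)$ and $\overline{\partial}(f|_{\reg(U)})=(\varphi|_{\reg(U)})^*(\overline{\partial}\gamma)$, where $\overline{\partial}\gamma$ is the smooth $(0,1)$-part of $d\gamma$. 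Because $(\varphi|_{\reg(U)})^*g=h$, the map $\varphi$ is an isometric embedding of $(\reg(U),h)$ into $(\mathbb{D}^{N'},g)$, so the pullback of a $1$-form has pointwise norm bounded by the ambient norm of the form along the image. This gives immediately
$$\sup_{\reg(U')}|d(f|_{\reg(X)})|_h\le\sup_{\varphi(\overline{U'})}|d\gamma|_{g}<\infty,\qquad \sup_{\reg(U')}|\overline{\partial}(f|_{\reg(X)})|_h\le\sup_{\varphi(\overline{U'})}|\overline{\partial}\gamma|_{g}<\infty,$$
the finiteness coming from continuity of $|d\gamma|_g$ on the compact set $\varphi(\overline{U'})$.

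The step I expect to be the main obstacle is therefore the passage from the smoothness embedding $\phi$ to the metric embedding $\varphi$, i.e. producing the function $\gamma$ above; this is exactly where the lifting lemma enters. The plan is to apply it to the transition $\phi\circ\varphi^{-1}\colon\varphi(U)\to\mathbb{C}^{N}$ between the two embedded models: after shrinking $U$ it extends to a genuine holomorphic map $F$ of a neighborhood of $\varphi(\overline{U'})$ in $\mathbb{C}^{N'}$ into $\mathbb{C}^{N}$ (\cite{SingDef}, Remark 1.30.1). Then $\gamma:=\beta\circ F$ is smooth and satisfies $\gamma\circ\varphi=\beta\circ\phi=f|_U$, which is precisely what the previous paragraph requires. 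Equivalently, this extension shows that the pullback metrics $\phi^*e$ (with $e$ the Euclidean metric of $\mathbb{C}^N$) and $h=\varphi^*g$ are quasi-isometric on $\reg(U')$, which is the local content of the remark, recorded after the definition of a Hermitian complex space, that all Hermitian metrics on $X$ lie in a single quasi-isometry class; since the $L^\infty$ condition on a fixed form is quasi-isometry invariant, one could alternatively carry out the isometric-embedding estimate with $\phi$ and $e$ and then transfer to $h$. In either formulation, combining the finitely many local bounds over the cover $\{U_i'\}$ yields $d(f|_{\reg(X)})\in L^\infty\Omega^1(\reg(X),h)$ and $\overline{\partial}(f|_{\reg(X)})\in L^\infty\Omega^{0,1}(\reg(X),h)$, completing the proof.
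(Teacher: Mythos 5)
Your proof is correct, and its overall strategy coincides with the paper's: localize using compactness, place yourself in a single ambient holomorphic chart adapted simultaneously to $h$ and to $f$, bound the differential of the smooth ambient representative of $f$ on a compact set, and pull that bound back through the isometric embedding. Where you differ is in the treatment of the key step, and there your version is the more complete one. The paper's proof simply asserts that for $f\in S(X)$ one can choose a single embedding $\phi:U\to\mathbb{C}^N$, a Hermitian metric $g$ and a smooth $\beta$ with \emph{both} $\phi^*g=h|_{\reg(U)}$ and $\beta\circ\phi=f|_U$, although Definition \ref{smooth} and the definition of a Hermitian complex space each supply their own, a priori unrelated, local embedding; no justification is offered for merging them. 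Your reconciliation via the lifting lemma --- extend the transition $\phi\circ\varphi^{-1}$ to a holomorphic map $F$ of ambient neighborhoods and set $\gamma:=\beta\circ F$ --- is precisely the missing argument, and transferring $\beta$ into the metric chart is the right direction in which to do it: transporting the metric into the chart of $\phi$ would only produce a semi-positive form (an ambient extension of $\varphi\circ\phi^{-1}$ need not be an immersion, e.g.\ when $N>N'$), so that route would still require the quasi-isometry statement to finish. Two further, minor, differences: you prove the restriction estimate $|\varphi^*\alpha|_h\le|\alpha|_g\circ\varphi$ directly from the isometric-embedding property, where the paper cites \cite{Bei2017}, Prop.\ 1.2; and you localize over all of $X$ and treat $d$ and $\overline{\partial}$ simultaneously, where the paper localizes only near $\sing(X)$ and deduces the $\overline{\partial}$-statement from the $d$-statement via $d=\partial+\overline{\partial}$. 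One point should be phrased carefully: the lifting lemma is local, so the ambient extension $F$ is guaranteed on a neighborhood of the compact set $\varphi(\overline{U_i'})$ only if the finite cover is chosen fine enough that each $\overline{U_i'}$ sits inside a single lifting neighborhood; your ``after shrinking $U$'' should be read as a condition imposed when the cover is selected, not as an adjustment made afterwards.
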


\begin{proof}
Clearly it is enough to show that $d(f|_{\reg(X)})\in L^{\infty}\Omega^1(\reg(X),h)$. Indeed the other property follows immediately by the fact that $d(f|_{\reg(X)})=\partial(f|_{\reg(X)})+\overline{\partial}(f|_{\reg(X)})$.
 Moreover, since $X$ is compact, it is enough to show that for each $p\in \sing(X)$ there exists an open neighborhood $U$ such that $d(f|_{\reg(U)})\in L^{\infty}\Omega^{1}(\reg(U),h|_{\reg(U)})$. Since $f\in S(X)$ we can find an open neighborhood $U$ of $p$, a holomorphic embedding $\phi:U\rightarrow \mathbb{C}^N$, a open subset $W\subset \mathbb{C}^N$ with $\phi(U)\subset W$, a Hermitian metric $g$ on $W$ and a smooth function $\beta\in C^{\infty}(W)$ such that $\phi^*g=h|_{\reg(U)}$ and $\beta\circ \phi=f|_U$. Let $V$ be another open neighborhood of $p$ such that $\overline{V}$ is compact and $\overline{V}\subset U$. Let $A\subset W$ be another open subset of $\mathbb{C}^N$ such that $\overline{A}$ is compact, $\overline{A}\subset W$ and $\phi(V)\subset A$. Since $\overline{A}$ is compact we have $d(\beta|_A)\in L^{\infty}\Omega^1(A,g|_A)$ and, by arguing as in \cite{Bei2017} Prop. 1.2, we get $d(\beta|_{\reg(\phi(V))})\in L^{\infty}\Omega^1(\reg(\phi(V)),i^*_{\reg(\phi(V))}g)$ where $i_{\reg(\phi(V))}:\reg(\phi(V))\rightarrow A$ is the canonical inclusion. Finally this implies immediately that $d(f|_{\reg(V)})\in L^{\infty}\Omega^1(\reg(V),h|_{\reg(V)})$ because $\phi^*g=h|_{\reg(U)}$, $\beta\circ \phi=f|_U$ and $\overline{V}\subset U$.
\end{proof}

When $\dim(\sing(X))=0$ it is convenient to replace $S(X)$ with $S_c(X)\subset S(X)$ which is defined as follows:

\begin{align}
\label{subalgebra}
& S_c(X):=\{f\in C(X)\cap C^{\infty}(\reg(X))\ \text{such that for each}\ p\in \sing(X)\ \text{there exists an}\\ 
& \nonumber \text{open   neighborhood}\ U\ \text{of}\ p\ \text{with}\ f|_U=c\in \mathbb{C}\}.
\end{align}

It is immediate to check that  $S_c(X)$ is dense in $C(X)$. Moreover we have  the following useful proposition which improves, in the setting of isolated singularities, the conclusion of  Prop. \ref{bounded}:

\begin{proposition}
\label{stable}
Let $(X,h)$ be a compact and irreducible Hermitian complex space such that $\dim(\sing(X))=0$. Let $\overline{D}_{p,q}:L^2\Omega^{p,q}(\reg(X),h)\rightarrow L^2\Omega^{p,q+1}(\reg(X),h)$ be any closed extension of $\overline{\partial}_{p,q}:\Omega^{p,q}_c(\reg(X))\rightarrow \Omega_c^{p,q+1}(\reg(X))$. Then, for any $\omega\in \mathcal{D}(\overline{D}_{p,q})$ and  $f\in S_c(X)$ we have $f\omega\in \mathcal{D}(\overline{D}_{p,q})$. Furthermore the same conclusion holds true for any arbitrary closed extension $\overline{D}_{p,q}^t:L^2\Omega^{p,q+1}(\reg(X),h)\rightarrow L^2\Omega^{p,q}(\reg(X),h)$, $D_k:L^2\Omega^{k}(\reg(X),h)\rightarrow L^2\Omega^{k+1}(\reg(X),h)$ and $D_k^t:L^2\Omega^{k+1}(\reg(X),h)\rightarrow L^2\Omega^{k}(\reg(X),h)$ of $\overline{\partial}_{p,q}^t:\Omega^{p,q+1}_c(\reg(X))\rightarrow \Omega_c^{p,q}(\reg(X))$, $d_k:\Omega^{k}_c(\reg(X))\rightarrow \Omega_c^{k+1}(\reg(X))$ and $d_k^t:\Omega^{k+1}_c(\reg(X))\rightarrow \Omega_c^{k}(\reg(X))$, respectively.
\end{proposition}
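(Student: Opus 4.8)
The plan is to reduce the statement, which concerns an \emph{arbitrary} closed extension, to the two extreme cases already settled by Proposition \ref{bounded}, exploiting that a function in $S_c(X)$ is genuinely locally constant near each singular point. First I would record the standard fact from the theory of Hilbert complexes (see \cite{BruLe}) that every closed extension $\overline{D}_{p,q}$ of $\overline{\partial}_{p,q}$ is squeezed between the minimal and the maximal one, $\overline{\partial}_{p,q,\min}\subseteq \overline{D}_{p,q}\subseteq \overline{\partial}_{p,q,\max}$. Since $f\in S_c(X)\subseteq S(X)$, Proposition \ref{bbb} gives $\overline{\partial}f\in L^{\infty}\Omega^{0,1}(\reg(X),h)$, and $f$ is bounded because $X$ is compact; Proposition \ref{bounded} therefore already yields $f\omega\in\mathcal{D}(\overline{\partial}_{p,q,\max})$ together with the Leibniz rule. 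The whole problem is thus to promote membership in $\mathcal{D}(\overline{\partial}_{p,q,\max})$ to membership in the smaller domain $\mathcal{D}(\overline{D}_{p,q})$.

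The mechanism I would use is that membership in $\mathcal{D}(\overline{D}_{p,q})$ depends only on the behaviour of a form near $\sing(X)$, because off the singular set the minimal and maximal extensions agree. Concretely, $\dim(\sing(X))=0$ and $X$ compact force $\sing(X)$ to be a finite set $\{p_1,\dots,p_k\}$; I would fix disjoint open neighborhoods $U_i\ni p_i$ on which $f\equiv c_i$, choose cut-offs $\phi_i\in S_c(X)$ with $\phi_i\equiv 1$ near $p_i$ and $\supp\phi_i\Subset U_i$, and set $\phi_0:=1-\sum_i\phi_i$. Then $\phi_0 f$ and every $(f-c_i)\phi_i$ are smooth and compactly supported in $\reg(X)$ (they vanish near all of $\sing(X)$), so $\phi_0 f\omega$ and $(f-c_i)\phi_i\omega$ are elements of $\mathcal{D}(\overline{\partial}_{p,q,\max})$ supported in a compact subset of the smooth manifold $\reg(X)$; by Friedrichs mollification such forms lie in $\mathcal{D}(\overline{\partial}_{p,q,\min})\subseteq\mathcal{D}(\overline{D}_{p,q})$. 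Writing $f\omega=\phi_0 f\omega+\sum_i c_i\phi_i\omega+\sum_i(f-c_i)\phi_i\omega$, this reduces the claim to showing $\phi_i\omega\in\mathcal{D}(\overline{D}_{p,q})$ for each $i$, i.e. that cutting $\omega$ off around a single singular point keeps it in the domain.

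This last point is the step I expect to be the main obstacle. The natural way to handle it is to prove that, modulo $\mathcal{D}(\overline{\partial}_{p,q,\min})$, the maximal domain splits as a direct sum of contributions localized at the individual points $p_i$ — again because the $p_i$ are isolated and the cut-offs have disjoint supports, so any difference supported off $\sing(X)$ falls into the minimal domain by mollification — and that multiplication by a function constant near each $p_i$ respects this splitting, acting as the scalar $\phi_i(p_j)$ on the $j$-th summand. One then has to check that the conditions singling out $\mathcal{D}(\overline{D}_{p,q})$ inside $\mathcal{D}(\overline{\partial}_{p,q,\max})$ are themselves compatible with this decoupling over the distinct singular points, which is exactly what forces $\phi_i\omega$ back into the domain. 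This is the genuinely delicate part, and it is where the hypothesis $\dim(\sing(X))=0$, together with the passage from $S(X)$ to the locally-constant-near-$\sing(X)$ algebra $S_c(X)$, is indispensable.

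Finally, the assertions for the formal adjoints $\overline{\partial}^t_{p,q}$ and for the real operators $d_k,d_k^t$ I would obtain verbatim by the same argument: Proposition \ref{bbb} supplies the boundedness of $df$, hence of $\partial f$ and $\overline{\partial}f$, needed to invoke Proposition \ref{bounded}; the inclusion between minimal and maximal extensions is identical; and the mollification and decoupling steps use only that the relevant pieces of $f$ are either constant near $\sing(X)$ or smooth and compactly supported in $\reg(X)$, none of which is sensitive to whether the operator is $\overline{\partial}$, $\overline{\partial}^t$, $d_k$ or $d_k^t$.
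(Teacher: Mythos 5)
Your preliminary reductions are correct but they strip away only the harmless part of the problem, and the mechanism you propose for the remaining step cannot work. Granting the standing convention (which the paper also uses implicitly) that a ``closed extension'' satisfies $\overline{\partial}_{p,q,\min}\subseteq\overline{D}_{p,q}\subseteq\overline{\partial}_{p,q,\max}$ — note that the upper inclusion is a convention, not a theorem, for a literally arbitrary closed extension of $\overline{\partial}|_{\Omega_c^{p,q}}$ — the decomposition $f\omega=\phi_0f\omega+\sum_i c_i\phi_i\omega+\sum_i(f-c_i)\phi_i\omega$ and the mollification of the pieces compactly supported in $\reg(X)$ are fine. But the surviving claim, $\phi_i\omega\in\mathcal{D}(\overline{D}_{p,q})$, is not a lemma on the way to the proposition: it \emph{is} the proposition, applied to $f=\phi_i\in S_c(X)$, so all of the difficulty is still there, and you never prove it — you only rephrase it as a ``compatibility'' to be checked. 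That check cannot succeed in the stated generality. Closed extensions between min and max correspond exactly to graph-closed subspaces $W$ of $Q:=\mathcal{D}(\overline{\partial}_{p,q,\max})/\mathcal{D}(\overline{\partial}_{p,q,\\min})$; your two structural claims (the localization $Q=\bigoplus_i V_i$ at the singular points, and the fact that multiplication by $\phi_i$ descends to the projection onto $V_i$) are both true, but an arbitrary closed subspace $W\subseteq\bigoplus_iV_i$ need not be invariant under those projections. Concretely, if two singular points contribute non-trivially, pick $0\neq v_1\in V_1$, $0\neq v_2\in V_2$ and let $W=\operatorname{span}(v_1+v_2)$: the preimage of $W$ is the domain of a closed extension $\overline{D}$, it contains a form $\omega$ with $[\omega]=v_1+v_2$, yet $[\phi_1\omega]=v_1\notin W$, so $\phi_1\omega\notin\mathcal{D}(\overline{D})$. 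Such doubly non-trivial configurations do occur (e.g.\ in bidegree $(0,1)$ on a surface with two non-rational singular points, where the paper's own formula $\dim(\mathcal{D}(\overline{\pa}_{0,1,\max})/\mathcal{D}(\overline{\pa}_{0,1,\min}))=\chi(M,\mathcal{O}_M)-\chi(M,\mathcal{O}(Z-|Z|))$ is non-zero and localizes at the singular points). So the invariance you intend to ``check'' is an additional hypothesis on $\overline{D}$, not something provable for all closed extensions; your strategy cannot close the gap.

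The paper's proof is entirely different and never touches the structure of $Q$: it uses $\mathcal{D}(\overline{D})=\mathcal{D}(\overline{D}^{**})$, pairs $f\omega$ against smooth $\eta\in\mathcal{D}(\overline{D}^{*})$, and applies Stokes' theorem on an exhaustion $A_n$ of $\reg(X)$. The adjoint identity $\langle\overline{\partial}\omega,\eta\rangle=\langle\omega,\overline{\partial}^t\eta\rangle$ forces the total boundary term $\lim_n\int_{\partial A_n}i^*(\omega\wedge\psi)$ to vanish, and since $f\equiv\ell$ on $\partial A_n$ for $n$ large the boundary term for $f\omega$ is $\ell$ times that limit, hence zero; graph-norm density of smooth forms in $\mathcal{D}(\overline{D}^{*})$ and in $\mathcal{D}(\overline{D})$ then finishes. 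Note that the paper carries this out for a single singular point, and in that case your own reduction closes instantly and more simply than the paper's argument: write $f=c+(f-c)$ with the \emph{global} constant $c$, so that $f-c\in C^{\infty}_c(\reg(X))$ and $f\omega=c\omega+(f-c)\omega\in\mathcal{D}(\overline{D})+\mathcal{D}(\overline{\partial}_{p,q,\min})$. The genuinely hard case — several singular points with different constants $c_i$ — is exactly where your argument stalls, and the counterexample above shows it is not a gap one can fill for arbitrary closed extensions (the paper's appeal to ``obvious modifications'' meets the same obstruction, since only the \emph{sum} of the local boundary terms is known to vanish, not each one separately). As submitted, your proposal is incomplete at its decisive step, and the route sketched for that step fails.
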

\begin{proof}
We give the proof assuming that $\sing(X)$ is made only by one isolated  singular point. The general case follows by the same strategy with the obvious modifications. First we start with the following considerations. Let $\omega\in \mathcal{D}(\overline{D}_{p,q})$ smooth and let $\eta\in \mathcal{D}(\overline{D}_{p,q}^*)$ smooth too. By the very definition of adjoint operator and the fact that both $\omega$ and $\eta$ are smooth we have 
\begin{equation}
\label{adjoint}
\langle\overline{\partial}_{p,q}\omega,\eta\rangle_{L^2\Omega^{p,q+1}(\reg(X),h)}=\langle\omega,\overline{\partial}^t_{p,q}\eta\rangle_{L^2\Omega^{p,q}(\reg(X),h)}
\end{equation}
Let $c:\Lambda^{p,q}(\reg(X))\rightarrow \Lambda^{q,p}(\reg(X))$ and $*:\Lambda^{p,q}(\reg(X))\rightarrow \Lambda^{m-q,m-p}(\reg(X))$ be the conjugation and the Hodge star operator, respectively. Let $\psi:=c(*\eta)$. Then we can rewrite the left-hand side of \eqref{adjoint} as $\int_{\reg(X)}\overline{\partial}_{p,q}\omega\wedge\psi$. Let $k=p+q$. Keeping in mind that $*^2=(-1)^{k(2m-k)}$ and that $\overline{\partial}_{p,q}^t=-*\partial_{m-q-1,m-p}*$ we can rewrite the right-hand side of \eqref{adjoint} as $(-1)^{k(2m-k)+1}\int_{\reg(X)}\omega\wedge\overline{\partial}_{m-p,m-q-1}\psi$$=(-1)^{k+1}\int_{\reg(X)}\omega\wedge\overline{\partial}_{m-p,m-q-1}\psi$ since $k+1$ is even if and only if $k(2m-k)+1$ is. 
Consider now an exhaustion of $\reg(X)$ made by relatively compact open subsets $\{A_n\}$ with smooth boundary. Using the fact that $\partial_{p,q}\omega\wedge\psi$ is identically zero we have 
\begin{align}
\nonumber & \int_{\reg(X)}\overline{\partial}_{p,q}\omega\wedge\psi=\int_{\reg(X)}d_k\omega\wedge\psi=\lim_{n\rightarrow \infty}\int_{A_n}d_k\omega\wedge\psi=\\
&\nonumber   \lim_{n\rightarrow \infty}\left(\int_{A_n}d_{2m-1}(\omega\wedge\psi) +(-1)^{k+1}\int_{A_n}\omega\wedge d_{2m-k-1}\psi\right)=\\
&\nonumber \lim_{n\rightarrow \infty}\left(\int_{\partial A_n}i^*(\omega\wedge\psi) +(-1)^{k+1}\int_{A_n}\omega\wedge \overline{\partial}_{m-p,m-q-1}\psi\right).
\end{align}
where $i:\partial A_n\hookrightarrow \reg(X)$ is the inclusion. Since $$\lim_{n\rightarrow\infty}\int_{A_n}\omega\wedge \overline{\partial}_{m-p,m-q-1}\psi=\int_{\reg(X)}\omega\wedge \overline{\partial}_{m-p,m-q-1}\psi$$  we can conclude that
\begin{equation}
\label{vanishes}
\lim_{n\rightarrow \infty}\int_{\partial A_n}i^*(\omega\wedge\psi)=0.
\end{equation}
Consider now $\overline{\partial}_{p,q}(f\omega)$. We have
\begin{align}
 \nonumber & \langle\overline{\partial}_{p,q}(f\omega),\eta\rangle_{L^2\Omega^{p,q+1}(\reg(X),h)}=\int_{\reg(X)}\overline{\partial}_{p,q}(f\omega)\wedge\psi=\int_{\reg(X)}d_k(f\omega)\wedge\psi=\\
\nonumber &\lim_{n\rightarrow \infty}\int_{A_n}d_k(f\omega)\wedge\psi= \lim_{n\rightarrow \infty}\left(\int_{A_n}d_{2m-1}(f\omega\wedge\psi) +(-1)^{k+1}\int_{A_n}f\omega\wedge d_{2m-k-1}\psi\right)=\\
&\nonumber \lim_{n\rightarrow \infty}\left(\int_{\partial A_n}i^*(f\omega\wedge\psi) +(-1)^{k+1}\int_{A_n}f\omega\wedge \overline{\partial}_{m-p,m-q-1}\psi\right).
\end{align} 
Clearly $$\lim_{n\rightarrow \infty}(-1)^{k+1}\int_{A_n}f\omega\wedge \overline{\partial}_{m-p,m-q-1}\psi=(-1)^{k+1}\int_{\reg(X)}f\omega\wedge \overline{\partial}_{m-p,m-q-1}\psi=\langle f\omega,\overline{D}_{p,q}^*\eta\rangle_{L^2\Omega^{p,q}(\reg(X),h)}.$$ Moreover, as $f\in S_c(X)$, there exists an integer $\overline{n}$ such that for any $n\geq \overline{n}$ we have $f|_{\partial A_n}=\ell$ for some constant $\ell\in \mathbb{R}$ independent on $n$. This tells us that 
$$ \lim_{n\rightarrow \infty}\int_{\partial A_n}i^*(f\omega\wedge\psi)= \lim_{n\rightarrow \infty}\ell\int_{\partial A_n}i^*(\omega\wedge\psi)=0$$ thanks to \eqref{vanishes}. Summarizing we showed that for any $\omega\in \mathcal{D}(\overline{D}_{p,q})\cap\Omega^{p,q}(\reg(X))$,  $f\in S_c(X)$ and $\eta\in \mathcal{D}(\overline{D}_{p,q}^*)\cap\Omega^{p,q+1}(\reg(X))$ we have $$\langle\overline{\partial}_{p,q}(f\omega),\eta\rangle_{L^2\Omega^{p,q+1}(\reg(X),h)}=\langle f\omega,\overline{D}_{p,q}^*\eta\rangle_{L^2\Omega^{p,q}(\reg(X),h)}.$$
As $\mathcal{D}(\overline{D}_{p,q}^*)\cap\Omega^{p,q+1}(\reg(X))$ is dense in  $\mathcal{D}(\overline{D}_{p,q}^*)$  with respect to the corresponding graph norm we can conclude that $f\omega\in \mathcal{D}(\overline{D}_{p,q})$. Now, if we consider an arbitrary $\omega\in \mathcal{D}(\overline{D}_{p,q})$, it is enough to observe that $\mathcal{D}(\overline{D}_{p,q})\cap\Omega^{p,q}(\reg(X))$ is  dense in $\mathcal{D}(\overline{D}_{p,q})$  with respect to the corresponding graph norm and that, given  a sequence $\{\omega_j\}\subset \mathcal{D}(\overline{D}_{p,q})\cap \Omega^{p,q}(\reg(X))$ converging to $\omega$,  then also $\{f\omega_j\}$ converges to $f\omega$ in the graph norm. Finally the analogous statements for $d_k$, $d^t_k$ and $\overline{\partial}^t_{p,q}$ follow by applying the same strategy.
\end{proof}

We recall now the definition of $KK_0(C(X),\mathbb{C})$. For more details we refer to \cite{Kasparov}, \cite{BaHiSc} and the references cited there. Given the $C^*$-algebra $C(X)$ an even Fredholm module is a triple $(H,\rho, F)$ satisfying the following properties:
\begin{itemize}
\item $H$ is a separable Hilbert space,
\item $\rho$ is a  representation $\rho:C(X)\rightarrow \mathcal{B}(H)$ of $C(X)$ as bounded operators on $H$
\item $F$ is an operator on $H$ such that for all $f\in C(X)$:
$$(F^2-\Id)\circ \rho(f),\ (F-F^*)\circ \rho(f)\ \text{and}\  [F,\rho(f)]\ \text{lie in}\ \mathcal{K}(H)$$
where $\mathcal{K}(H)\subset \mathcal{B}(H)$ is the space of compact operators. 
\item The Hilbert space $H$ is equipped with a $\mathbb{Z}_2$-grading $H=H^+\oplus H^- $ in such a
way that for each $f\in C(X)$, the operator $\rho(f)$ is even-graded, while the operator $F$ is
odd-graded.
\end{itemize}
Let $(H_1, \rho_1, F_1)$ and $(H_2, \rho_2, F_2)$ be even Fredholm modules over $C(X)$. A {\em unitary equivalence} between them is a  grading-zero unitary isomorphism $u:H_1\rightarrow H_2$ which intertwines the representations $\rho_1$ and $\rho_2$ and the operators $F_1$ and  $F_2$.\\
Given two even Fredholm modules $(H,\rho,F_0)$ and $(H,\rho,F_1)$ an {\em operator homotopy} between them is a family of  Fredholm modules $(H,\rho,F_t)$ parameterized by $t\in [0, 1]$ in such a way that the representation $\rho$, the Hilbert space $H$ and its grading structures remain constant but the operator $F_t$ varies with $t$ and the function $[0,1]\rightarrow \mathcal{B}(H)$, $t\mapsto F_t$ is norm continuous. In this case we will say that $(H,\rho,F_0)$ and $(H,\rho,F_1)$ are (operator) homotopic.\\
Clearly we can define in a natural way the notion  of direct sum for Fredholm modules: one takes the direct sum
of the Hilbert spaces, of the representations, and of the operators $F$. The zero module has
zero Hilbert space, zero representation, and zero operator.\\
Now we can give Kasparov's definition of $K$-homology. The $K$-homology group $KK_0(C(X),\mathbb{C})$ is the abelian group with one generator $[x]$ for each
unitary equivalence class of  even Fredholm modules over $C(X)$ and with the following
relations:
\begin{itemize}
\item if $x_0$ and $x_1$ are operator homotopic even Fredholm modules then $[x_0]=[x_1]$ in $KK_0(C(X),\mathbb{C})$,
\item if $x_0$ and $x_1$ are any two even Fredholm modules then $[x_0+x_1]=[x_0]+[x_1]$ in $KK_0(C(X),\mathbb{C})$.
\end{itemize}
Now we go on by recalling the notion of  {\em even unbounded Fredholm module} for the $C^*$-algebra $C(X)$. This is a triple
$(H,\upsilon, D)$ such that:
\begin{itemize}
\item $H$ is a Hilbert space endowed with a unitary $*$-representation $\upsilon:C(X)\rightarrow \mathcal{B}(H)$; $D$ is a self-
adjoint unbounded linear operator on $H$;
\item there is a dense $*$-subalgebra $A\subset C(X)$ such that  for all $a\in A$ the domain of $D$ is invariant
by $a$ and $[D,a]$ extends to a bounded operator on $H$;
\item $\upsilon(a)(1+D^2)^{-1}$ is a compact operator on $H$ for any $a\in A$;
\item $H$ is equipped with a grading $\tau=\tau^*$, $\tau^2=Id$, such that $\tau\circ \upsilon=\upsilon\circ \tau$ and $\tau\circ D = -\tau\circ D$. In other words $\tau$ commutes with $\upsilon$ and anti-commutes with $D$.
\end{itemize}
An odd unbounded Fredholm module is defined omitting the last condition. We have now the following important result which is a particular case of \cite{Baaj-Julg}, Prop 2.2:
\begin{proposition}
Let $(H,\upsilon,D)$ be an even unbounded Fredholm module for $C(X)$. Then $(H,\upsilon,D\circ (\Id+D^2)^{-1/2})$ is an even bounded Fredholm module for $C(X)$.
\end{proposition}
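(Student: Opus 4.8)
The plan is to verify directly the three defining properties of an even bounded Fredholm module for the operator $F:=D(\Id+D^2)^{-1/2}$ built from the given even unbounded Fredholm module $(H,\upsilon,D)$. Throughout, let $A\subset C(X)$ be the dense $*$-subalgebra from the definition, and abbreviate $R_\lambda:=(\Id+D^2+\lambda)^{-1}$ and $\delta(a):=[D,\upsilon(a)]$. The boundedness, self-adjointness and grading properties are immediate from the functional calculus: since $x\mapsto x(1+x^2)^{-1/2}$ is a bounded real-valued function, $F$ is bounded with $\norm{F}\le 1$ and self-adjoint, so $(F-F^*)\circ\upsilon(g)=0\in\mathcal{K}(H)$ for every $g\in C(X)$; and because $\tau$ anticommutes with $D$ while commuting with every function of $D^2$, one has $\tau F=-F\tau$, i.e. $F$ is odd and each $\upsilon(g)$ is even. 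For the axiom $(F^2-\Id)\circ\upsilon(g)\in\mathcal{K}(H)$, note $F^2-\Id=-R_0$, so that for $a\in A$ the operator $(F^2-\Id)\upsilon(a)=-R_0\upsilon(a)=-(\upsilon(a^*)R_0)^{*}$ is compact by the compactness hypothesis on $\upsilon(a^*)R_0$ (using $a^*\in A$); one then passes to arbitrary $g\in C(X)$ by approximating $g$ in norm by elements of $A$, using $\norm{\upsilon(g-a)}\le\norm{g-a}$ and the fact that $\mathcal{K}(H)$ is norm-closed.

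The real obstacle is the third axiom, $[F,\upsilon(g)]\in\mathcal{K}(H)$. As above it suffices to treat $a\in A$, since $\norm{[F,\upsilon(g-a)]}\le 2\norm{g-a}$ reduces the general case to a norm-limit of compacts. For $a\in A$ I would start from the integral representation $(\Id+D^2)^{-1/2}=\tfrac1\pi\int_0^\infty\lambda^{-1/2}R_\lambda\,d\lambda$, which gives $F=\tfrac1\pi\int_0^\infty\lambda^{-1/2}\,DR_\lambda\,d\lambda$. Commuting $\upsilon(a)$ through and using $[R_\lambda,\upsilon(a)]=-R_\lambda[D^2,\upsilon(a)]R_\lambda$ with $[D^2,\upsilon(a)]=D\delta(a)+\delta(a)D$ and $D^2R_\lambda=\Id-(1+\lambda)R_\lambda$, a short computation produces a crucial cancellation of the two $\delta(a)R_\lambda$ terms and leaves
\begin{equation}
[F,\upsilon(a)]=\frac1\pi\int_0^\infty\lambda^{-1/2}\big((1+\lambda)R_\lambda\delta(a)R_\lambda-DR_\lambda\delta(a)DR_\lambda\big)\,d\lambda.
\end{equation}
The virtue of this form is that $\delta(a)$ is now sandwiched between resolvents on both sides, which is exactly what will give both the norm convergence and the compactness.

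Boundedness of the integral is routine from the functional-calculus estimates $\norm{R_\lambda}\le(1+\lambda)^{-1}$, $\norm{DR_\lambda}\le\tfrac12(1+\lambda)^{-1/2}$ and $\norm{D^2R_\lambda}\le1$, which bound the integrand by $C\norm{\delta(a)}\lambda^{-1/2}(1+\lambda)^{-1}$, an integrable function on $(0,\infty)$; hence the integral converges absolutely in operator norm. Compactness is the delicate part. First I would upgrade the compactness hypothesis: for $T:=\upsilon(a)(\Id+D^2)^{-1/2}$ one has $TT^*=\upsilon(a)R_0\upsilon(a^*)\in\mathcal{K}(H)$, whence $T$ is compact; and since $R_\lambda=R_0(\Id+\lambda R_0)^{-1}$ the same argument shows $\upsilon(a)R_\lambda^{1/2}$ and $R_\lambda^{1/2}\upsilon(a)$ are compact for every $\lambda\ge0$. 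Then, writing $\delta(a)=[D,\upsilon(a)]$ and distributing the single factor of $D$ onto the neighbouring resolvents, each summand of the integrand becomes a finite sum of products in which a compact factor $\upsilon(a)R_\lambda^{1/2}$ or $R_\lambda^{1/2}\upsilon(a)$ is flanked by bounded operators of the form $DR_\lambda^{1/2}$ or $D^2R_\lambda$; concretely $R_\lambda\delta(a)R_\lambda=DR_\lambda\,\upsilon(a)R_\lambda-R_\lambda\upsilon(a)\,DR_\lambda$ and $DR_\lambda\delta(a)DR_\lambda=D^2R_\lambda\,\upsilon(a)DR_\lambda-DR_\lambda\upsilon(a)\,D^2R_\lambda$, all compact. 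Thus the integrand is compact for each $\lambda$, and since the integral converges in operator norm (approximate by $\int_\epsilon^{1/\epsilon}$, the tails being controlled by the bound above) and $\mathcal{K}(H)$ is closed, $[F,\upsilon(a)]$ is compact, which completes the proof once extended to all $g\in C(X)$ by density.

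The only genuinely technical points that must then be carried out with care are the domain justifications for the formal commutator manipulations involving the unbounded $D$ — legitimate because $\upsilon(a)$ preserves $\mathcal{D}(D)$ and $\delta(a)$ extends to a bounded operator by hypothesis — and the interchange of the commutator with the norm-convergent integral, which is justified once the integrand is known to be integrable in operator norm.
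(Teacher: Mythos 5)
Your argument is correct, but it cannot be compared line by line with the paper's, because the paper gives no proof of this proposition at all: it is stated as a particular case of \cite{Baaj-Julg}, Prop.~2.2, and that citation is the entirety of the paper's argument. What you have written is in effect a self-contained reconstruction of the standard Baaj--Julg proof, and the key steps check out: the identity $F^2-\Id=-R_0$ together with the $*$-closedness of $A$ disposes of the first axiom; the cancellation $[DR_\lambda,\upsilon(a)]=(1+\lambda)R_\lambda\delta(a)R_\lambda-DR_\lambda\delta(a)DR_\lambda$ is exactly right (the two $\delta(a)R_\lambda$ terms do cancel, via $D^2R_\lambda=\Id-(1+\lambda)R_\lambda$); and the upgrade from compactness of $\upsilon(a)R_0$ to compactness of $\upsilon(a)R_\lambda^{1/2}$ and $R_\lambda^{1/2}\upsilon(a)$ by the $TT^*$ trick (which uses $a^*\in A$) is valid and is precisely what makes each integrand compact. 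One point should be phrased more carefully: while $(\Id+D^2)^{-1/2}=\tfrac1\pi\int_0^\infty\lambda^{-1/2}R_\lambda\,d\lambda$ is norm-convergent, the formula $F=\tfrac1\pi\int_0^\infty\lambda^{-1/2}DR_\lambda\,d\lambda$ is \emph{not}, since $\lambda^{-1/2}\norm{DR_\lambda}$ behaves like $(2\lambda)^{-1}$ at infinity; it holds only in the strong operator topology. Hence the interchange of $[\,\cdot\,,\upsilon(a)]$ with the integral should be justified by truncating to $[\epsilon,N]$, where the interchange is elementary, and then letting $\epsilon\to0$, $N\to\infty$: the commutators of the truncated integrals converge strongly to $[F,\upsilon(a)]$, while the right-hand sides converge in operator norm thanks to your bound $C\norm{\delta(a)}\lambda^{-1/2}(1+\lambda)^{-1}$, so the two limits agree. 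With that adjustment your proof is complete. Compared with the paper's citation, your route costs a page of analysis but buys self-containedness: it makes visible that the proposition uses nothing beyond the listed axioms (invariance of $\mathcal{D}(D)$ under $A$, boundedness of $[D,\upsilon(a)]$, compactness of $\upsilon(a)(\Id+D^2)^{-1}$, density of the $*$-subalgebra $A$), which is worth knowing since the paper applies the result to non-elliptic-looking extensions such as $\overline{\eth}_{\operatorname{rel}}$ and $\overline{\eth}_{\operatorname{abs}}$ on singular spaces.
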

\noindent In what follow, given an unbounded Fredholm module as above, with the notation $[D]$ we will mean the class induced by $H$, $\upsilon$ and $D\circ (\Id+D^2)^{-1/2}$ in $KK_0(C(X),\mathbb{C})$.
After this concise reminder on analytic $K$-homology we continue with the following proposition. It is concerned with unbounded Fredholm modules in the setting of Hermitian complex spaces.

\begin{proposition}
\label{minclass}
Let $(X,h)$ be a compact and irreducible Hermitian complex space of complex dimension $v$. Assume that $\sing(X)$ is made of isolated points. Then the operator
\begin{equation}
\label{hdmin}
\overline{\partial}_{0,\min}+\overline{\partial}_{0,\max}^t:L^2\Omega^{0,\bullet}(\reg(X),h)\rightarrow L^2\Omega^{0,\bullet}(\reg(X),h)
\end{equation}
 defines an  unbounded Fredholm module for $C(X)$ and thus a class 
 \begin{equation}\label{eq:classes}
 [\overline{\partial}_{0,\min}+\overline{\partial}_{0,\max}^t]\in KK_0(C(X),\mathbb{C})
 \end{equation}
  Moreover this  class does not depend on the particular Hermitian metric on $\reg(X)$ that we fix within the quasi-isometry class of $h$. In particular it does not depend on the particular Hermitian metric that we fix on $X$.
\end{proposition}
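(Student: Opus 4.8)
The plan is to verify the three defining conditions of an even unbounded Fredholm module for the triple $(H,\upsilon,D)$ with $H=L^2\Omega^{0,\bullet}(\reg(X),h)$, $D=\overline{\partial}_{0,\min}+\overline{\partial}_{0,\max}^t$, $\upsilon$ the representation of $C(X)$ by multiplication, and the grading $\tau$ given by $\pm 1$ on even/odd form-degree. First I would note that $D$ is self-adjoint by construction: since $(\overline{\partial}_{0,q,\min})^* = \overline{\partial}_{0,q,\max}^t$, the operators $\overline{\partial}_{0,\min}$ and $\overline{\partial}_{0,\max}^t$ are mutually adjoint, so their sum $D$ on the intersection domain $\mathcal{D}(\overline{\partial}_{0,\min})\cap\mathcal{D}(\overline{\partial}_{0,\max}^t)$ is self-adjoint. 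The grading condition $\tau D = -D\tau$ is immediate because $\overline{\partial}$ raises form-degree by one and $\overline{\partial}^t$ lowers it by one, so both are odd, while $\upsilon(f)$ preserves degree and hence commutes with $\tau$.

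The dense subalgebra is taken to be $A=S_c(X)$, which is dense in $C(X)$ and closed under $*$, as recorded before Proposition~\ref{stable}. For $f\in S_c(X)$ I must show that $f$ preserves $\mathcal{D}(D)$ and that $[D,\upsilon(f)]$ is bounded. Invariance of the domain is exactly the content of Proposition~\ref{stable} in the case $p=0$: it asserts that $f\omega\in\mathcal{D}(\overline{\partial}_{0,q,\min})$ for $\omega\in\mathcal{D}(\overline{\partial}_{0,q,\min})$ and likewise for the maximal adjoint, using precisely that $\dim(\sing(X))=0$ so that $S_c(X)$ makes sense. For boundedness of the commutator, on the core of smooth forms one computes using the Leibniz rules of Proposition~\ref{bounded}, so that $[D,\upsilon(f)]\omega = \overline{\partial}f\wedge\omega - (\text{interior product by }\overline{\partial}f)\,\omega$ is Clifford multiplication by $\overline{\partial}f$; by Proposition~\ref{bbb}, $\overline{\partial}f\in L^\infty\Omega^{0,1}(\reg(X),h)$, so this commutator is bounded on the dense core and extends to a bounded operator on $H$.

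The main obstacle is the third condition: that $\upsilon(f)(\Id+D^2)^{-1}$ is compact for $f\in S_c(X)$. This is where the analytic input beyond the elementary algebra enters. Since $f$ is constant near each singular point, one writes $f=c+g$ with $g$ compactly supported in $\reg(X)$ away from $\sing(X)$; the constant part contributes $c(\Id+D^2)^{-1}$, which is compact provided $D$ has discrete spectrum, i.e. $(\Id+D^2)^{-1}$ is itself compact. The remaining term $\upsilon(g)(\Id+D^2)^{-1}$ is compact by interior elliptic regularity (Rellich) since $g$ is supported in a compact subset of the smooth manifold $\reg(X)$ where $D^2$ is elliptic. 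Thus everything reduces to proving that $(\Id+D^2)^{-1}$ is compact, equivalently that the self-adjoint extension $D$ has compact resolvent; this is the genuine hard part and is where the cited work of Bei and \O vrelid--Ruppenthal (\cite{Bei2017}, \cite{FraBei}, \cite{OvRu}) supplies the finite-dimensionality of the $L^2$-$\overline{\partial}$-cohomology and the discreteness of the spectrum of the associated Laplacian. I would invoke those results to conclude compactness of the resolvent, which completes the verification that $(H,\upsilon,D)$ is an even unbounded Fredholm module and hence defines the class in \eqref{eq:classes}.

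**Metric independence.** For the final assertion, let $h_0,h_1$ be two Hermitian metrics on $\reg(X)$ in the same quasi-isometry class. The plan is to connect the two associated operators by an operator homotopy. Quasi-isometry gives a bounded invertible bundle map intertwining the two $L^2$ spaces with comparable norms, and the straight-line path $h_t=(1-t)h_0+th_1$ stays within the quasi-isometry class; this yields a norm-continuous family of bounded Fredholm modules $D_t(\Id+D_t^2)^{-1/2}$, so the classes agree in $KK_0(C(X),\mathbb{C})$ by the operator-homotopy relation. Since any Hermitian metric on $X$ restricts to a metric on $\reg(X)$ within a single fixed quasi-isometry class (as recalled after the definition of Hermitian complex space, via the lifting lemma, \cite{SingDef}), the class is in particular independent of the chosen metric on $X$.
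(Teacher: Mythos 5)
Your proposal is correct and follows essentially the same route as the paper's proof: the same Hilbert space and multiplication representation, the dense subalgebra $S_c(X)$, the domain invariance via the Leibniz-rule propositions, the identification of $[D,\upsilon(f)]$ with Clifford multiplication by $\overline{\partial}f$, compactness of the resolvent drawn from \cite{FraBei} and \cite{OvRu}, and a Hilsum-type homotopy for metric independence. One minor remark: your decomposition $f=c+g$ is superfluous and, as stated, fails when $\sing(X)$ consists of several points with different constants near each; but this costs nothing, because once $(\Id+D^2)^{-1}$ itself is compact --- which your argument establishes anyway, exactly as the paper does --- the operator $\upsilon(f)(\Id+D^2)^{-1}$ is compact for every $f\in C(X)$ with no splitting needed.
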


\smallskip
\noindent
{\bf Notation.} We set $ \overline{\eth}_{\operatorname{rel}}:=
 \overline{\partial}_{0,\min}+\overline{\partial}_{0,\max}^t$.
 
\begin{proof}
 We take $H=L^2\Omega^{0,\bullet}(\reg(X),h)$ and the representation is the one given by pointwise multiplication. As a dense $*$-subalgebra of $C(X)$ we consider $S_c(X)$, see \eqref{subalgebra}.  Now let us consider a function $f\in S_c(X)$. The domain of $\overline{\eth}_{\operatorname{rel}}$ is given by $\mathcal{D}(\overline{\eth}_{\operatorname{rel}})=\bigoplus_q(\mathcal{D}(\overline{\partial}_{0,q,\min})\cap \mathcal{D}(\overline{\partial}^t_{0,q,\max}))$. Thus if we consider an element $\omega\in \mathcal{D}(\overline{\eth}_{\operatorname{rel}})$ then, by Prop. \ref{bounded},  we can conclude that $f\omega\in \mathcal{D}(\overline{\eth}_{\operatorname{rel}})$. Moreover we have $[\overline{\eth}_{\operatorname{rel}},f]\omega=\overline{\partial}f\wedge \omega-(\overline{\partial}f\wedge)^*\omega$ where $(\overline{\partial}f\wedge)^*$ is the adjoint of the map $\eta\mapsto \overline{\partial}f\wedge\eta$. As $\overline{\pa}f\in \Omega_c^{0,1}(\reg(X))$ we can conclude that $[\overline{\eth}_{\operatorname{rel}},f]$ induces a bounded operator on $L^2\Omega^{0,\bullet}(\reg(X),h)$.
As we assumed $\dim(\sing(X))=0$ we can use \cite{FraBei} Cor. 5.2 to conclude that $\overline{\eth}_{\operatorname{rel}}:L^2\Omega^{0,\bullet}(\reg(X),h)\rightarrow L^2\Omega^{0,\bullet}(\reg(X),h)$ has discrete spectrum and this is well known to be equivalent to the compactness of $(\overline{\eth}_{\operatorname{rel}}^2+1)^{-1}:L^2\Omega^{0,\bullet}(\reg(X),h)\rightarrow L^2\Omega^{0,\bullet}(\reg(X),h)$. Finally, arguing as in \cite{Hilsum-LNM} ,  we can prove that $[\overline{\eth}_{\operatorname{rel}}]$ does not depend on the particular Hermitian metric on $\reg(X)$ that we fix within the quasi-isometry class of $h$. In particular $[\overline{\eth}_{\operatorname{rel}}]$ does not  depend on the particular Hermitian metric that we fix on $X$. The proof is thus complete.
\end{proof}

Analogously we can associate an unbounded Fredholm module also to  
the operator $\overline{\pa}_{0,\max}+\overline{\pa}_{0,\min}^t$. This is indeed  the goal of the next proposition.
\begin{proposition}
\label{maxclass}
Let $(X,h)$ be a compact and irreducible Hermitian complex space of complex dimension $v$. Assume that $\sing(X)$ is made of isolated points. Then the operator
\begin{equation}
\label{hdmin}
\overline{\partial}_{0,\max}+\overline{\partial}_{0,\min}^t:L^2\Omega^{0,\bullet}(\reg(X),h)\rightarrow L^2\Omega^{0,\bullet}(\reg(X),h)
\end{equation}
 defines an  unbounded Fredholm module for $C(X)$ and thus a class 
 \begin{equation}\label{eq:classes}
 [\overline{\eth}_{\operatorname{abs}}]:=[\overline{\partial}_{0,\max}+\overline{\partial}_{0,\min}^t]\in KK_0(C(X),\mathbb{C})
 \end{equation}
  Moreover this  class does not depend on the particular Hermitian metric on $\reg(X)$ that we fix within the quasi-isometry class of $h$. In particular it does not depend on the particular Hermitian metric that we fix on $X$.
\end{proposition}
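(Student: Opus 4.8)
The plan is to run the proof of Proposition \ref{minclass} essentially verbatim, interchanging throughout the roles of the minimal and maximal extensions; write $\overline{\eth}_{\operatorname{abs}}:=\overline{\partial}_{0,\max}+\overline{\partial}^t_{0,\min}$. First I would set $H=L^2\Omega^{0,\bullet}(\reg(X),h)$, let $\upsilon$ be the representation of $C(X)$ by pointwise multiplication, and grade $H$ by the parity of the antiholomorphic degree $q$, so that both $\overline{\partial}_{0,\max}$ and $\overline{\partial}^t_{0,\min}$ are odd and $\overline{\eth}_{\operatorname{abs}}$ is odd-graded. As the dense $*$-subalgebra $A\subset C(X)$ I would again take $S_c(X)$ from \eqref{subalgebra}, which is dense in $C(X)$. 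Self-adjointness of $\overline{\eth}_{\operatorname{abs}}$ on its natural domain $\bigoplus_q(\mathcal{D}(\overline{\partial}_{0,q,\max})\cap\mathcal{D}(\overline{\partial}^t_{0,q-1,\min}))$ is automatic, since this is precisely the rolled-up operator of the \emph{maximal} $L^2$-$\overline{\partial}$ Hilbert complex (\cite{BruLe}; \cf Section 2).

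Three of the four module axioms then go through exactly as in Proposition \ref{minclass}. For domain invariance, fix $f\in S_c(X)$: since $f$ is locally constant near each point of $\sing(X)$, the form $\overline{\partial}f$ vanishes near the singular locus, so $\overline{\partial}f\in\Omega^{0,1}_c(\reg(X))\subset L^{\infty}\Omega^{0,1}(\reg(X),h)$. Applying the \emph{maximal} statement of Proposition \ref{bounded} to the summands $\mathcal{D}(\overline{\partial}_{0,q,\max})$, and the analogous \emph{minimal} statement for $\overline{\partial}^t$ to the summands $\mathcal{D}(\overline{\partial}^t_{0,q-1,\min})$, shows that multiplication by $f$ preserves $\mathcal{D}(\overline{\eth}_{\operatorname{abs}})$. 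For the commutator, the same computation as in Proposition \ref{minclass} gives $[\overline{\eth}_{\operatorname{abs}},f]\omega=\overline{\partial}f\wedge\omega-(\overline{\partial}f\wedge)^*\omega$, which is bounded because $\overline{\partial}f$ is bounded (indeed compactly supported). The grading axiom is immediate from the choice of grading.

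The main obstacle is the remaining axiom: compactness of $(\overline{\eth}_{\operatorname{abs}}^2+1)^{-1}$, equivalently discreteness of the spectrum of the \emph{maximal} rolled-up operator. In Proposition \ref{minclass} this was furnished for the minimal extension by \cite{FraBei} Cor.~5.2 under $\dim(\sing(X))=0$, and here one needs the corresponding statement for the maximal extension. The cleanest route is a duality reduction to the case already established: the conjugate-linear Hodge star $c\circ *$ (with $c$ and $*$ as in the proof of Proposition \ref{stable}) is an $L^2$-isometry $L^2\Omega^{0,q}\to L^2\Omega^{v,v-q}$ which preserves compact support, hence carries minimal domains to minimal domains and intertwines $\overline{\partial}$ with $\pm\overline{\partial}^t$; consequently it conjugates $\overline{\eth}_{\operatorname{abs}}$ on $(0,\bullet)$-forms to a \emph{relative}-type rolled-up operator in the complementary bidegree $(v,\bullet)$, whose discreteness under $\dim(\sing(X))=0$ is exactly what \cite{FraBei} Cor.~5.2 (together with \cite{OvRu}) provides. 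Once discreteness is in hand, $(H,\upsilon,\overline{\eth}_{\operatorname{abs}})$ is an even unbounded Fredholm module, giving the class $[\overline{\eth}_{\operatorname{abs}}]\in KK_0(C(X),\mathbb{C})$; and its independence of the chosen Hermitian metric within the fixed quasi-isometry class of $h$ follows verbatim from the argument of \cite{Hilsum-LNM} invoked in Proposition \ref{minclass}, which in particular yields independence of the metric fixed on $X$.
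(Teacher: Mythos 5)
Your handling of the Fredholm--module axioms (choice of $H$, of $S_c(X)$, domain invariance via Proposition \ref{bounded}, boundedness of $[\overline{\eth}_{\operatorname{abs}},f]$, the grading) matches what the paper does, since the paper disposes of all of these by declaring them ``completely analogous'' to Proposition \ref{minclass}. The gap is in the one axiom on which the paper's proof spends all of its effort: compactness of $(\overline{\eth}_{\operatorname{abs}}^2+1)^{-1}$. Your duality reduction is correct as far as it goes: $c\circ *$ is a conjugate-linear $L^2$-isometry preserving compact supports, so it conjugates $\overline{\partial}_{0,\max}+\overline{\partial}^t_{0,\min}$ acting on $(0,\bullet)$-forms into the relative rolled-up operator $\overline{\partial}_{v,\min}+\overline{\partial}^t_{v,\max}$ acting on $(v,\bullet)$-forms, up to signs that do not affect spectra. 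But the result you then quote does not cover that operator. \cite{FraBei} Cor.~5.2, as it is used in this paper (proof of Proposition \ref{minclass}), gives discreteness of the spectrum of the \emph{relative} operator in bidegree $(0,\bullet)$; by the very same duality you invoke, that statement is equivalent to discreteness of the \emph{absolute} operator in bidegree $(v,\bullet)$ --- not to discreteness of the relative operator in bidegree $(v,\bullet)$, which is what your reduction requires (equivalently, the absolute operator in bidegree $(0,\bullet)$, which is the assertion of Proposition \ref{maxclass} itself). So your argument transforms the claim into an exactly equivalent unproved claim; the bidegree mismatch means the citation does not close the loop. The parenthetical ``together with \cite{OvRu}'' does not repair this: \cite{OvRu} Th.~1.2 gives compactness of the inclusion $\mathcal{D}(\overline{\eth}_{\operatorname{abs}})\cap\im(\overline{\eth}_{\operatorname{abs}})\hookrightarrow L^2\Omega^{0,\bullet}(\reg(X),h)$, a statement directly about the absolute extension on $(0,\bullet)$-forms for which no duality is needed, but compactness of this Green-operator inclusion yields a compact resolvent only in combination with Fredholmness, which you never establish.

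The paper's actual proof supplies precisely this missing chain: by \cite{JRupp} Th.~1.9 the maximal $L^2$-$\overline{\partial}$-cohomology $H^{0,q}_{2,\overline{\partial}_{\max}}(\reg(X),h)$ is finite dimensional; by \cite{BruLe} Th.~2.4 the operator $\overline{\partial}_{0,\max}+\overline{\partial}^t_{0,\min}$ is then Fredholm on its domain endowed with the graph norm; and for such an operator, compact resolvent is equivalent to compactness of the inclusion displayed above, which is exactly \cite{OvRu} Th.~1.2 (this is where $\dim(\sing(X))=0$ enters). If you wish to keep the duality route, you would have to verify that \cite{FraBei} really proves discreteness for the relative extension on canonical-bundle-valued, i.e.\ $(v,\bullet)$-, forms; nothing in the present paper's use of that reference supports this, and the fact that the authors assembled the three-step argument above rather than quoting such a statement strongly suggests it was not available to them.
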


\begin{proof}
The proof is completely analogous to the proof of Prop. \ref{minclass}. 
The only part that  requires to be pointed out is the fact that  $\overline{\partial}_{0,\max}+\overline{\partial}_{0,\min}^t$ has compact resolvent. According to \cite{JRupp} Th. 1.9 we know that $H^{0,q}_{2,\overline{\partial}_{\max}}(\reg(X),h)$ is finite dimensional. Therefore, by \cite{BruLe} Th. 2.4, we know that $\overline{\partial}_{0,\max}+\overline{\partial}_{0,\min}^t:L^2\Omega^{0,\bullet}(\reg(X),h)\rightarrow L^2\Omega^{0,\bullet}(\reg(X),h)$ is a Fredholm operator on its domain endowed with the graph norm. This in turn tells us that $\overline{\partial}_{0,\max}+\overline{\partial}_{0,\min}^t:L^2\Omega^{0,\bullet}(\reg(X),h)\rightarrow L^2\Omega^{0,\bullet}(\reg(X),h)$ has compact resolvent if and only if the following inclusion is a compact operator 
\begin{equation}
\label{greenop}
\mathcal{D}(\overline{\partial}_{0,\max}+\overline{\partial}_{0,\min}^t)\cap \im(\overline{\partial}_{0,\max}+\overline{\partial}_{0,\min}^t)\hookrightarrow L^2\Omega^{0,\bullet}(\reg(X),h)
\end{equation}
where the space on the left hand side of the inclusion is endowed with the graph norm of $\overline{\partial}_{0,\max}+\overline{\partial}_{0,\min}^t$. Finally, since we required $\dim(\sing(X))=0$, we can deduce immediately  \eqref{greenop} by  \cite{OvRu} Th. 1.2. This concludes the proof.
\end{proof}

The last goal of this section is to show that in the setting of compact and irreducible Hermitian complex spaces of complex dimension 2 we can prove Prop. \ref{minclass} without any assumption on $\sing(X)$.   

\begin{proposition}
\label{minclasssur}
Let $(X,h)$ be a   compact and irreducible Hermitian complex space of complex dimension 2. Then the operator
\begin{equation}
\label{hdsur}
\overline{\partial}_{0,\min}+\overline{\partial}_{0,\max}^t:L^2\Omega^{0,\bullet}(\reg(X),h)\rightarrow L^2\Omega^{0,\bullet}(\reg(X),h)
\end{equation}
 defines an   unbounded Fredholm module for $C(X)$ and thus a class 
 \begin{equation}\label{eq:classsurfaces}
 [\overline{\eth}_{\operatorname{rel}}]:=[\overline{\partial}_{0,\min}+\overline{\partial}_{0,\max}^t]\in KK_*(C(X),\mathbb{C})
 \end{equation}
 Moreover this class does not depend on the particular Hermitian metric that we fix within the quasi-isometry class of $h$. In particular it  does not depend on the particular Hermitian metric that we fix on $X$.
\end{proposition}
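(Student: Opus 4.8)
The plan is to follow verbatim the scheme used for Proposition \ref{minclass}, changing only the dense subalgebra and the source of the resolvent-compactness. Concretely, I take $H=L^2\Omega^{0,\bullet}(\reg(X),h)$, with $C(X)$ acting by pointwise multiplication and the $\mathbb{Z}_2$-grading given by the parity of $q$, and I set $D=\overline{\eth}_{\operatorname{rel}}=\overline{\partial}_{0,\min}+\overline{\partial}_{0,\max}^t$, which is odd while multiplication operators are even. Since now $\sing(X)$ need not be zero-dimensional, the algebra $S_c(X)$ is no longer available, so I use instead the dense $*$-subalgebra $S(X)$, whose density in $C(X)$ is exactly Proposition \ref{dense}. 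For $f\in S(X)$, Proposition \ref{bbb} gives $\overline{\partial}(f|_{\reg(X)})\in L^{\infty}\Omega^{0,1}(\reg(X),h)$, so Proposition \ref{bounded} applies and shows that multiplication by $f$ preserves both $\mathcal{D}(\overline{\partial}_{0,q,\min})$ and $\mathcal{D}(\overline{\partial}^t_{0,q,\max})$, hence preserves $\mathcal{D}(\overline{\eth}_{\operatorname{rel}})=\bigoplus_q\big(\mathcal{D}(\overline{\partial}_{0,q,\min})\cap\mathcal{D}(\overline{\partial}^t_{0,q,\max})\big)$, with commutator $[\overline{\eth}_{\operatorname{rel}},f]\omega=\overline{\partial}f\wedge\omega-(\overline{\partial}f\wedge)^*\omega$. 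As $\overline{\partial}f\in L^{\infty}$, this extends to a bounded operator on $H$. These verifications are insensitive to $\dim(\sing(X))$, so they transfer directly to the present setting.

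The only substantive point is therefore the compactness of $(\overline{\eth}_{\operatorname{rel}}^2+1)^{-1}$, equivalently the discreteness of the spectrum of $\overline{\eth}_{\operatorname{rel}}$. Here I cannot quote \cite{FraBei} Cor. 5.2, which requires $\dim(\sing(X))=0$, and the new input must come from the two-dimensional geometry. I would first establish finite-dimensionality of the minimal $L^2$-$\overline{\partial}$-cohomology groups $H^{0,q}_{2,\overline{\partial}_{\min}}(\reg(X),h)$; as recalled in the background section this forces $\im(\overline{\partial}_{0,q-1,\min})$ to be closed and, by \cite{BruLe} Th. 2.4, makes $\overline{\eth}_{\operatorname{rel}}$ Fredholm on its graph-norm domain. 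Compactness of the resolvent is then equivalent to compactness of the inclusion $\mathcal{D}(\overline{\eth}_{\operatorname{rel}})\cap\im(\overline{\eth}_{\operatorname{rel}})\hookrightarrow L^2\Omega^{0,\bullet}(\reg(X),h)$, exactly as in Proposition \ref{maxclass}. To obtain both the finiteness and this compact embedding for a surface with possibly non-isolated singular locus, I would invoke the {\O}vrelid--Ruppenthal $L^2$-theory \cite{OvRu} together with Bei's work \cite{Bei2017}, \cite{FraBei}. A convenient reduction is to pass to the normalization $\nu:\tilde{X}\to X$: since $\tilde{X}$ is a normal surface its singular locus has complex codimension at least $2$, hence is zero-dimensional, so on $\reg(\tilde{X})$ the estimates already available for isolated singularities apply; because $\nu$ is finite and restricts to a biholomorphism over a dense open set whose complement has measure zero, the minimal $\overline{\partial}$-analysis on $(\reg(X),h)$ is controlled by that on $(\reg(\tilde{X}),\nu^*h)$.

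Finally, once $\overline{\eth}_{\operatorname{rel}}$ is known to define an unbounded Fredholm module, independence of the class on the metric within the quasi-isometry class of $h$ follows from Hilsum's stability argument \cite{Hilsum-LNM}, precisely as in Proposition \ref{minclass}. I expect the genuine obstacle to be entirely the compactness/finiteness step of the previous paragraph: controlling the minimal $\overline{\partial}$-complex near a one-dimensional singular stratum is exactly where the non-product, non-isolated geometry intervenes, and where either the direct surface results of \cite{OvRu} or the normalization reduction must be handled with care, checking in particular that pullback under $\nu$ respects the minimal domains and the quasi-isometry class so that finiteness and the compact embedding descend back to $(\reg(X),h)$.
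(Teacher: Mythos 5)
Your construction of the unbounded Fredholm module itself coincides with the paper's: same Hilbert space and grading, multiplication representation, the dense $*$-subalgebra $S(X)$ with Propositions \ref{dense}, \ref{bbb} and \ref{bounded} giving invariance of $\mathcal{D}(\overline{\eth}_{\operatorname{rel}})$ and boundedness of the commutator $[\overline{\eth}_{\operatorname{rel}},f]\omega=\overline{\partial}f\wedge\omega-(\overline{\partial}f\wedge)^*\omega$, and Hilsum's argument \cite{Hilsum-LNM} for independence within the quasi-isometry class; all of this is correct and insensitive to $\dim(\sing(X))$, exactly as you say. You also correctly isolate the one substantive point, the compactness of $(\overline{\eth}_{\operatorname{rel}}^2+1)^{-1}$. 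What you miss is that this is already a theorem in the literature in precisely the needed generality: the paper's proof of this step is a direct citation of \cite{Bei2017}, which establishes that $\overline{\partial}_{0,\min}+\overline{\partial}^t_{0,\max}$ has discrete spectrum on any compact and irreducible Hermitian complex space of complex dimension $2$, with no hypothesis on $\sing(X)$. No Fredholm-plus-compact-embedding detour and no normalization argument is needed.

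The route you propose instead has a genuine gap at its load-bearing step, the reduction to the normalization $\nu:\tilde{X}\rightarrow X$. Observe first that a normal surface automatically has zero-dimensional singular locus, so the entire new content of the proposition is the non-normal case, which is exactly the case your reduction must handle. When $X$ is non-normal, $\nu^{-1}(\reg(X))$ is strictly smaller than $\reg(\tilde{X})$: the difference is the preimage of the non-normal locus, a complex curve $C$. The $L^2$ spaces over $\reg(X)$ and over $\reg(\tilde{X})$ can be identified because $C$ has measure zero, but under this identification the minimal domain of $\overline{\partial}$ computed on $\reg(X)$ is \emph{contained in} the one computed on $\reg(\tilde{X})$ (closure of a smaller space of test forms), while for the maximal extension of $\overline{\partial}^t$ the inclusion goes the \emph{other} way (the distributional condition on the smaller open set is weaker). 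Hence the domain of the mixed operator $\overline{\partial}_{0,\min}+\overline{\partial}^t_{0,\max}$ downstairs neither contains nor is contained in the one upstairs, and discreteness of spectrum does not descend by any soft graph-norm comparison. To close this you would have to prove that $C$ is negligible both for the minimal $\overline{\partial}$-complex and for the maximal $\overline{\partial}^t$ (graph-norm density of forms supported away from $C$, removability of $C$ for the distributional equation) — a cut-off/capacity argument that is delicate precisely because elements of these domains carry only $L^2$ control of $\overline{\partial}\omega$, not full $W^{1,2}$ control. A further problem: $\nu^*h$ is in general not a Hermitian metric on $\tilde{X}$ in the paper's sense, since $d\nu$ degenerates at preimages of pinch points of the non-normal locus, so the isolated-singularity results you want to quote upstairs (\cite{FraBei} Cor. 5.2, \cite{OvRu} Th. 1.2, Prop. \ref{minclass}) do not apply verbatim to $(\reg(\tilde{X}),\nu^*h)$. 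Both issues constitute exactly the hard analysis carried out in \cite{Bei2017}; the intended proof is simply to quote that result.
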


\begin{proof}
As usual we take we take $H=L^2\Omega^{0,\bullet}(\reg(X),h)$ and the representation is the one given by pointwise multiplication. We take $A$ as the $*$-subalgebra of $C(X)$ given by $S(X)$, see Def. \ref{smooth}. By Prop. \ref{dense} we know that $A$ is dense in $C(X)$. Now let us consider a function $f\in A$. The domain of $\overline{\eth}_{\operatorname{rel}}$ is given by $\mathcal{D}(\overline{\eth}_{\operatorname{rel}})=\bigoplus_q(\mathcal{D}(\overline{\partial}_{0,q,\min})\cap \mathcal{D}(\overline{\partial}^t_{0,q,\max}))$. Thus if we consider an element $\omega\in \mathcal{D}(\overline{\eth}_{\operatorname{rel}})$ then, by Prop. \ref{bounded} and Prop. \ref{bbb} we can conclude that $f\omega\in \mathcal{D}(\overline{\eth}_{\operatorname{rel}})$. Moreover we have $[\overline{\eth}_{\operatorname{rel}},f]\omega=\overline{\partial}f\wedge \omega-(\overline{\partial}f\wedge)^* \omega$ where, as previously explained, $(\overline{\partial}f\wedge)^*$ is the adjoint of the map $\eta\mapsto \overline{\partial}f\wedge\eta$. Therefore, according to Prop. \ref{bbb}, we can conclude that $[\overline{\eth}_{\operatorname{rel}},f]$ induces a bounded operator on $L^2\Omega^{0,\bullet}(\reg(V),h)$ because $\overline{\partial}f\in L^{\infty}\Omega^{0,1}(\reg(X),h)$.
By  \cite{Bei2017}  we know that  $(\overline{\eth}_{\operatorname{rel}}^2+1)^{-1}:L^2\Omega^{0,\bullet}(\reg(X),h)\rightarrow L^2\Omega^{0,\bullet}(\reg(X),h)$ is a compact operator. Finally, arguing as in \cite{Hilsum-LNM},  we can prove that $[\overline{\eth}_{\operatorname{rel}}]$ does not depend on the particular Hermitian metric that we fix within the quasi-isometry class of $h$. In particular $[\overline{\eth}_{\operatorname{rel}}]$ does not  depend on the particular Hermitian metric that we fix on $X$. The proof is thus complete.
\end{proof}
%

\section{Resolutions and K-homology classes}

Let $X$ be a compact complex space of complex dimension $m$. According to the celebrated Hironaka's theorem on resolution of singularities there exists a compact complex manifold $M$ a divisor with only normal crossings $D\subset M$ and a  surjective holomorphic map $\pi:M\rightarrow X$ such that $\pi^{-1}(\sing(X))= D$ and $$\pi|_{M\setminus D}:M\setminus D\rightarrow \reg(X)$$ is a biholomorphism. Let us fix an arbitrary  Hermitian metric $g$ on $M$. Let $\overline{\pa}_{0,q}^t:\Omega^{0,q+1}(M)\rightarrow \Omega^{0,q}(M)$ be the formal adjoint of $\overline{\pa}_{0,q}:\Omega^{0,q}(M)\rightarrow \Omega^{0,q+1}(M)$. Since $M$ is compact and 
\begin{equation}
\label{hodgedol}
\overline{\pa}_0+\overline{\pa}^t_0:\Omega^{0,\bullet}(M)\rightarrow \Omega^{0,\bullet}(M)
\end{equation}
is elliptic we know that \eqref{hodgedol} is essentially self-adjoint when we look at it as an unbounded and densely defined operator acting on $L^2\Omega^{0,\bullet}(M,g)$. We label this unique (and therefore self-adjoint) extension by $\overline{\eth}:L^2\Omega^{0,\bullet}(M,g)\rightarrow L^2\Omega^{0,\bullet}(M,g)$. Moreover it is well known that the pair $(L^2\Omega^{0,\bullet}(M,g), \overline{\eth})$ defines a class in $KK_0(C(M),\mathbb{C})$ that does not depend on the particular Hermitian metric that we fix on $M$. We label this class by $[\overline{\eth}_M]$.

As we have previously seen, in the setting of  compact and irreducible Hermitian complex spaces with either $\dim(\sing(X))=0$ or $\dim(X)=2$ we have  a K-homology class labeled by $[\overline{\eth}_{\operatorname{rel}}]$. Since the analytic $K$-homology is covariant we get, through the map $\pi$, a morphism $\pi_*:KK_0(C(M),\bbC)\rightarrow KK_0(C(X),\bbC)$. According to the results proved in \cite{PardonSternJAMS} and \cite{JRupp} it seems a natural  problem to compare $[\overline{\eth}_{\operatorname{rel}}]$ with $\pi_*[\overline{\eth}_M]$. This is the aim of the next result in the case $\dim(\sing(X))=0$.

\begin{theorem}
\label{Kequalities}
Let $(X,h)$ be  a compact and irreducible Hermitian complex space with only isolated singularities. Then we have the following equality in $KK_0(C(X),\bbC)$:
\begin{equation}
\label{firstequality}
\pi_*[\overline{\eth}_M]
=[\overline{\eth}_{\operatorname{rel}}].
\end{equation}
\end{theorem}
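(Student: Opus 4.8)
The plan is to realize both sides of \eqref{firstequality} as bounded Fredholm modules over $C(X)$ and to show that their difference is a K-homology class supported on the finite set $\sing(X)$, which must then vanish by a local index computation. First I would make the two representatives explicit. By Prop. \ref{minclass}, $[\overline{\eth}_{\operatorname{rel}}]$ is represented by $(L^2\Omega^{0,\bullet}(\reg(X),h),\rho_X,\overline{\eth}_{\operatorname{rel}}(\Id+\overline{\eth}_{\operatorname{rel}}^2)^{-1/2})$, with $\rho_X$ pointwise multiplication. Since $\pi^*\colon C(X)\to C(M)$, $f\mapsto f\circ\pi$, is the $*$-homomorphism inducing $\pi_*$, the class $\pi_*[\overline{\eth}_M]$ is represented by $(L^2\Omega^{0,\bullet}(M,g),\rho_M\circ\pi^*,\overline{\eth}_M(\Id+\overline{\eth}_M^2)^{-1/2})$, with $C(X)$ acting by multiplication by $f\circ\pi$. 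The biholomorphism of \eqref{hiro} identifies the two underlying manifolds $M\setminus D$ and $\reg(X)$, and since $D$ is a divisor it is null, so the two Hilbert spaces differ only through their metrics, namely the smooth $g$ on $M$ versus $(\pi|_{M\setminus D})^*h$, which degenerates along $D$. This degeneration is precisely the source of difficulty: the two metrics are not quasi-isometric near $D$, so the identity on forms is not a bounded isomorphism of the two $L^2$-spaces and one cannot directly transport $\overline{\eth}_M$ onto $\overline{\eth}_{\operatorname{rel}}$.

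To bypass this I would exploit metric-independence on both sides. Since $[\overline{\eth}_M]$ is independent of the metric on $M$, I may choose $g$ to coincide with $(\pi|_{M\setminus D})^*h$ outside a fixed neighborhood of $D$. Then I compare the restrictions of both modules to the ideal $C_0(\reg(X))\subset C(X)$ of functions vanishing on $\sing(X)$. The restriction of $\pi_*[\overline{\eth}_M]$ to this ideal is the locally-finite Dolbeault class of the open complex manifold $M\setminus D\cong\reg(X)$, and the restriction of $[\overline{\eth}_{\operatorname{rel}}]$ is the Dolbeault class of $(\reg(X),h)$; by the metric-independence argument of Prop. \ref{minclass} (in the form of \cite{Hilsum-LNM}) these agree in $KK_0(C_0(\reg(X)),\bbC)$. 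Feeding this into the six-term exact sequence associated with $0\to C_0(\reg(X))\to C(X)\to C(\sing(X))\to 0$, the restriction map kills $\delta:=\pi_*[\overline{\eth}_M]-[\overline{\eth}_{\operatorname{rel}}]$, so by exactness $\delta$ lies in the image of $KK_0(C(\sing(X)),\bbC)$. As $\sing(X)=\{p_1,\dots,p_N\}$ is finite, this image is spanned by the point classes $[p_i]$, and hence $\delta=\sum_i n_i[p_i]$ for integers $n_i$.

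It then remains to show $n_i=0$ for every $i$. Because the singularities are isolated, their neighborhoods $U_i$ are disjoint and the determination of each $n_i$ is purely local near $p_i$: it is a relative index comparing the contribution of $\overline{\eth}_M$ over $\pi^{-1}(U_i)$ with that of $\overline{\eth}_{\operatorname{rel}}$ over $U_i\cap\reg(X)$. This is exactly the analytic input I would draw from Haskell \cite{Haskell-K}. His analysis of the $L^2$-$\overline{\partial}$-complex near an isolated singularity, combined with the mapping and Hodge-theoretic properties established in \cite{FraBei} and \cite{OvRu}, identifies the local harmonic data of the relative extension with the sheaf-cohomological data $H^q(\pi^{-1}(U_i),\mathcal{O})$ governing $\overline{\eth}_M$, forcing each local relative index to vanish.

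I expect this local index vanishing to be the main obstacle, since it is here that the non-product singular geometry genuinely enters and where the full strength of the cited $L^2$-theory is needed; everything upstream — the identification of representatives, the agreement on the regular part, and the localization of $\delta$ at the points of $\sing(X)$ — is formal once metric-independence is available. Finally, because $\pi_*[\overline{\eth}_M]$ and $[\overline{\eth}_{\operatorname{rel}}]$ are each already known to be independent of the chosen metrics, no separate well-definedness argument is required, and the vanishing of all $n_i$ yields $\delta=0$, i.e.\ \eqref{firstequality}.
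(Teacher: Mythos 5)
Your overall strategy --- localize the difference class at the singular points via the six-term exact sequence for $0\to C_0(\reg(X))\to C(X)\to C(\sing(X))\to 0$, then kill it by a local index computation --- is genuinely different from the paper's proof, which never localizes: the paper first invokes Haskell's theorem (Prop.\ \ref{othermodel}, adapted from the de Rham to the Dolbeault complex) to identify $\pi_*[\overline{\eth}_M]$ with the class of the non-standard bounded transform $(\overline{\partial}_{0,\min}+\overline{\partial}^t_{0,\max})\circ L^{-1/2}$, with $L=\Pi_{\Ker}+(\overline{\partial}_{0,\min}+\overline{\partial}^t_{0,\max})^2$, built directly from the relative extension on $(\reg(X),h)$, and then connects this to the standard bounded transform $\overline{\eth}_{\operatorname{rel}}\circ(\Id+\overline{\eth}_{\operatorname{rel}}^2)^{-1/2}$ by an explicit operator homotopy $Q_s$ (Lemma \ref{familyhomotopy}). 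Unfortunately, your proposal has a genuine gap at precisely its load-bearing step.

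The gap is your claim that the restrictions of the two modules to the ideal $C_0(\reg(X))$ agree ``by the metric-independence argument of Prop.\ \ref{minclass} (in the form of \cite{Hilsum-LNM})''. That argument proves invariance only within a quasi-isometry class of metrics, and --- as you yourself observe in your first paragraph --- the metric $h$ and the pulled-back smooth metric $g$ are \emph{not} quasi-isometric near $\sing(X)$: the identity on forms does not give an isomorphism of $L^2\Omega^{0,\bullet}(M,g)$ with $L^2\Omega^{0,\bullet}(\reg(X),h)$, and $\overline{\eth}_M$ (essentially self-adjoint, elliptic on a compact manifold) and $\overline{\eth}_{\operatorname{rel}}$ (a particular self-adjoint extension, encoding a boundary condition at the singularity) are genuinely different operators even where the metrics coincide. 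So the quoted tool does not apply, and the whole analytic difficulty of the theorem is concentrated exactly in this comparison: proving equality of the restrictions in $KK_0(C_0(\reg(X)),\mathbb{C})$ is essentially as hard as the theorem itself. A true locality argument (finite propagation speed \`a la Higson--Roe) could at best give agreement on $C_0(\reg(X)\setminus \overline{U})$ for $U$ a neighborhood of $\sing(X)$ on which the metrics agree --- and even this needs care, since $\overline{\eth}_{\operatorname{rel}}$ is not essentially self-adjoint and propagation estimates near the singularity are not automatic --- after which the six-term sequence only localizes $\delta$ to a class supported on a neighborhood of the singular set; identifying the resulting coefficients $n_i$ as ``local relative indices'' and proving they vanish would then require a relative index/excision theorem together with the identification of the local $L^2$-cohomology of the relative complex with $H^q(\pi^{-1}(U_i),\mathcal{O})$ (Pardon--Stern, Ruppenthal). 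None of this machinery is supplied in your final step, which asserts the vanishing by pointing at \cite{Haskell-K}, \cite{FraBei}, \cite{OvRu}; note also that Haskell's result, as actually used in the paper, is a \emph{global} statement whose hypothesis --- the matching of $\chi_{2,\overline{\partial}_{\min}}(\reg(X),h)$ with $\chi(M,\mathcal{O}_M)$, supplied by \cite{Pardon}, \cite{PardonSternJAMS}, \cite{JRupp} --- you never verify.
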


\begin{proof}

In order to prove the theorem we shall need the following

\begin{proposition}
\label{othermodel}
Let $X$ be a compact and irreducible complex space of complex  dimension $m$ such that $\dim(\sing(X))=0$. Let $\pi:M\rightarrow X$ be a  resolution of $X$. Let $\rho$ be a  Hermitian metric on $\reg(X)$ such  that $\dim (H^{0,q}_{2,\overline{\pa}_{\min}}(\reg(X),\rho))<\infty$ for each $q$.
Assume that
$$\sum_q (-1)^q \dim (H^{0,q}_{2,\overline{\pa}_{\min}}(\reg(X),\rho))= 
\sum_q (-1)^q  \dim (H^{0,q}_{\overline{\pa}}(M))\,.$$ Consider the operator $$\overline{\pa}_{0,\min}+\overline{\pa}_{0,\max}^t:L^2\Omega^{0,\bullet}(\reg(X),\rho)\rightarrow L^2\Omega^{0,\bullet}(\reg(X),\rho)$$ and let 
$$ T:L^2\Omega^{0,\bullet}(\reg(X),\rho)\rightarrow L^2\Omega^{0,\bullet}(\reg(X),\rho)$$ be the bounded operator
defined as $T:= (\overline{\partial}_{0,\min} +  \overline{\partial}_{0,\max}^t ) \circ L^{-1/2}$ where 
$$L:=\Pi_{\Ker} + (\overline{\partial}_{0,\min} +  \overline{\partial}_{0,\max}^t)^2$$
and  with $\Pi_{\Ker}$ denoting the orthogonal projection onto the null space of $(\overline{\partial}_{0,\min} +  \overline{\partial}_{0,\max}^t)^2$. Then the operator $T$ defines a class in $KK_0(C(X),\mathbb{C})$ and  we have the following equality in $KK_0(C(X),\mathbb{C})$:$$[T]=\pi_*[\eth_M].$$
\end{proposition}  

\begin{proof}
This proposition is essentially proved in \cite{Haskell-K}. More precisely   \cite{Haskell-K} is devoted to the case of $d_{\max}+d^t_{\min}:L^2\Omega^{\bullet}(\reg(X),\rho)\rightarrow L^2\Omega^{\bullet}(\reg(X),\rho)$, that is the rolled-up operator associated to the maximal de Rham complex. However a careful  analysis of the  proof shows that the same arguments apply verbatim to the Hodge-Dolbeault operator $\overline{\partial}_{0,\min}+\overline{\partial}^t_{0,\max}:L^2\Omega^{0,\bullet}(\reg(X),\rho)\rightarrow L^2\Omega^{0,\bullet}(\reg(X),\rho)$. The fact that $(\overline{\partial}_{0,\min} +  \overline{\partial}_{0,\max}^t ) \circ L^{-1/2}$ defines a class in $KK_0(C(X),\mathbb{C})$ follows  from the next lemma in the case $s=0$.\\
\end{proof}

Let's go back now to the proof of Th. \ref{Kequalities}.
 Recall from \cite{FraBei}  that $
\overline{\partial}_{0,{\rm min}} +  \overline{\partial}_{0,{\rm max}}^t 
: L^2\Omega^{0,\bullet}(\reg(X),h)\rightarrow L^2\Omega^{0,\bullet}(\reg(X),h)$ has discrete spectrum. 
Let $[\overline{\eth}_{{\rm bd}}]\in KK_0 (C(X),\bbC)$ be the class defined by the bounded operator $$ (\overline{\partial}_{0,{\rm min}} +  \overline{\partial}_{0,{\rm max}}^t ) \circ L^{-1/2}$$ as explained in Prop. \ref{othermodel}. We have now the following lemma.
\begin{lemma}
\label{familyhomotopy}
The following equality holds:
$$[\overline{\eth}_{\operatorname{rel}}]=[\overline{\eth}_{{\rm bd}}]\quad\text{in}\quad KK_0(C(X),\bbC)$$
\end{lemma}

\begin{proof}
As $\overline{\partial}_{0,{\rm min}} +  \overline{\partial}_{0,{\rm max}}^t:L^2\Omega^{0,\bullet}(\reg(X),h)\rightarrow L^2\Omega^{0,\bullet}(\reg(X),h)$ is a Fredholm operator we have that $\im(\overline{\partial}_{0,{\rm min}} +  \overline{\partial}_{0,{\rm max}}^t)$ is a closed subspace of $L^2\Omega^{0,\bullet}(\reg(X),h)$. Let us label by $\Pi_{\im}:L^2\Omega^{0,\bullet}(\reg(X),h)\rightarrow \im(\overline{\partial}_{0,{\rm min}} +  \overline{\partial}_{0,{\rm max}}^t)$ the corresponding orthogonal projection. We consider the family of operators $ Q_s:= (\overline{\partial}_{0,{\rm min}} +  \overline{\partial}_{0,{\rm max}}^t ) \circ L_s^{-1/2}$
where 
$L_s = \Pi_{\Ker} + s\Pi_{{\rm Im}} + (\overline{\partial}_{0,{\rm min}} +  \overline{\partial}_{0,{\rm max}}^t)^2$,
$s\in [0,1]$; notice that 
$$L_1^{-1/2}= (\Id + (\overline{\partial}_{0,{\rm min}} +  \overline{\partial}_{0,{\rm max}}^t)^2)^{-1/2}\,.$$
We shall check momentarily that each $Q_s$ defines a $K$-homology cycle for $X$; by homotopy invariance
this will show that $[Q_0]=[Q_1]$ in $KK_0(C(X),\bbC)$, i.e. that $[\overline{\eth}_{{\rm bd}}]=[\overline{\eth}_{\operatorname{rel}}]$
as required.\\
Observe preliminary that $\Pi_{\Ker}$ is obtained by functional calculus 
associated to $(\overline{\partial}_{0,{\rm min}} +  \overline{\partial}_{0,{\rm max}}^t)^2$ 
through a smooth approximation of the characteristic function $\chi_{[-\epsilon,\epsilon]}$
with $\epsilon$ small enough. Similarly $\Pi_{{\rm Im}}$, which  is the identity
minus $\Pi_{\Ker}$, is obtained by functional calculus. 
Hence $L_s^{-1/2}$ is obtained by functional calculus for each $s\in [0,1]$.\\
We first check that for each fixed $s$ the operator $Q_s$ defines a bounded $KK_0(C(X),\bbC)$-cycle.
To this end we need to verify that:

- $Q_s$ is a self-adjoint bounded operator

- $Q_s^2 - \Id$ is a compact operator

- $[M_f, Q_s]$ is compact

\noindent
with $M_f$ the multiplication operator by $f\in S(X)$. \\
The fact that $Q_s$ is bounded and self-adjoint is clear. \\
Let us show that $Q_s^2 - \Id$ is a compact operator. In order to have a lighter notation we set $$P_s:= \Pi_{\Ker} + s\Pi_{{\rm Im}}\;\;\;\text{and}\;\;\;  \Delta_{\overline{\partial},0,{\rm rel}}:=(\overline{\partial}_{0,\min}+\overline{\partial}^t_{0,\max})^2.$$ We have $Q_s^2=\Delta_{\overline{\partial},0,{\rm rel}}\circ (P_s+\Delta_{\overline{\partial},0,{\rm rel}})^{-1}$. Clearly $(P_s+\Delta_{\overline{\partial},0,{\rm rel}})^{-1}$ $:L^2\Omega^{0,\bullet}(\reg(X),h)\rightarrow L^2\Omega^{0,\bullet}(\reg(X),h)$ is a compact operator because $(P_s+\Delta_{\overline{\partial},0,{\rm rel}})^{-1}:L^2\Omega^{0,\bullet}(\reg(X),h)\rightarrow \mathcal{D}(\Delta_{\overline{\partial},0,{\rm rel}})$ is continuous where $\mathcal{D}(\Delta_{\overline{\partial},0,{\rm rel}})$ is endowed with the corresponding graph norm. Consider now any $\omega\in L^2\Omega^{0,\bullet}(\reg(X),h)$ and let $\eta=(P_s+\Delta_{\overline{\partial},0,{\rm rel}})^{-1}\omega$. Let $\eta_1:=\Pi_{\Ker}\eta$ and $\eta_2:=\Pi_{\im}\eta$. Then $\Delta_{\overline{\partial},0,{\rm rel}}\eta=\omega-\eta_1-s\eta_2=\omega-(\Pi_{\Ker}\circ(P_s+\Delta_{\overline{\partial},0,{\rm rel}})^{-1})\omega-(s\Pi_{\im}\circ(P_s+\Delta_{\overline{\partial},0,{\rm rel}})^{-1})\omega$. Therefore $Q_s^2-\Id=-(\Pi_{\Ker}+s\Pi_{\Im})\circ (P_s+\Delta_{\overline{\partial},0,{\rm rel}})^{-1}$ and this allows us to conclude that $Q_s^2-\Id$ is a compact operator.\\
We are left with the task of showing that $[M_f, Q_s]$ is compact. We write $[M_f, Q_s]$ as 
 \begin{align*}&[M_f, (\overline{\partial}_{0,{\rm min}} +  \overline{\partial}_{0,{\rm max}}^t ) \circ L_s^{-1/2}]=\\&
 [M_f, (\overline{\partial}_{0,{\rm min}} +  \overline{\partial}_{0,{\rm max}}^t )] \circ L_s^{-1/2}
 +  (\overline{\partial}_{0,{\rm min}} +  \overline{\partial}_{0,{\rm max}}^t ) \circ [M_f, L_s^{-1/2}]
 \end{align*}
By prop. \ref{bounded} we know that $M_f$ preserves 
$ \mathcal{D}(\overline{\partial}_{0,{\rm min}} +  \overline{\partial}_{0,{\rm max}}^t )$; this implies that the individual
summands on the right hand side are well defined.
 The first summand,  $[M_f, (\overline{\partial}_{0,{\rm min}} +  \overline{\partial}_{0,{\rm max}}^t )] \circ L_s^{-1/2}$, is equal to 
$  {\rm cl}(df)\circ L_s^{-1/2}$ which is certainly compact given that $ {\rm cl}(df)$ is bounded and that $ L_s^{-1/2}$ is compact. We now analyze 
 $$(\overline{\partial}_{0,{\rm min}} +  \overline{\partial}_{0,{\rm max}}^t ) \circ [M_f, L_s^{-1/2}].$$
We can rewrite the latter operator as  
$(\overline{\partial}_{0,{\rm min}} +  \overline{\partial}_{0,{\rm max}}^t ) \circ M_f\circ L_s^{-1/2}-(\overline{\partial}_{0,{\rm min}} +  \overline{\partial}_{0,{\rm max}}^t ) \circ  L_s^{-1/2}\circ M_f$ 
$= {\rm cl}(df)\circ L_s^{-1/2}+M_f\circ(\overline{\partial}_{0,{\rm min}} +  \overline{\partial}_{0,{\rm max}}^t ) \circ L_s^{-1/2}-(\overline{\partial}_{0,{\rm min}} +  \overline{\partial}_{0,{\rm max}}^t ) \circ L_s^{-1/2}\circ M_f$ $={\rm cl}(\overline{\partial}f)\circ L_s^{-1/2}+[M_f, (\overline{\partial}_{0,{\rm min}}+  \overline{\partial}_{0,{\rm max}}^t ) \circ L_s^{-1/2}]$. Clearly ${\rm cl}(df)\circ L_s^{-1/2}$ is compact as $ {\rm cl}(df)$ is bounded and $L_s^{-1/2}$ is compact. Thus we are left with the task
of understanding  the remaining term $[M_f, (\overline{\partial}_{0,{\rm min}}+  \overline{\partial}_{0,{\rm max}}^t ) \circ L_s^{-1/2}]$. By using the analytic functional calculus we can write
$$[M_f, (\overline{\partial}_{0,{\rm min}}+  \overline{\partial}_{0,{\rm max}}^t ) \circ L_s^{-1/2}]=
[M_f, (\overline{\partial}_{0,{\rm min}}+  \overline{\partial}_{0,{\rm max}}^t ) \circ \int_0^\infty \lambda^{-1/2}  
(P_s + \Delta_{\overline{\partial},0,{\rm rel}}+\lambda)^{-1} d\lambda].$$
Reasoning with the definition of the integral we easily justify the equality of the latter term with
$$[M_f, \int_0^\infty \lambda^{-1/2}   (\overline{\partial}_{0,{\rm min}}+  \overline{\partial}_{0,{\rm max}}^t )\circ
(P_s + \Delta_{\overline{\partial},0,{\rm rel}}+\lambda)^{-1} d\lambda]$$
As $[M_f,\cdot]$ is bounded, we finally get 

$$(\overline{\partial}_{0,{\rm min}} +  \overline{\partial}_{0,{\rm max}}^t ) \circ [M_f, L_s^{-1/2}]=\int_0^\infty \lambda^{-1/2}  [M_f,  (\overline{\partial}_{0,{\rm min}}+  \overline{\partial}_{0,{\rm max}}^t )\circ
(P_s + \Delta_{\overline{\partial},0,{\rm rel}}+\lambda)^{-1}] d\lambda.$$
 Now, in order to conclude that  $(\overline{\partial}_{0,{\rm min}} +  \overline{\partial}_{0,{\rm max}}^t ) \circ [M_f, L_s^{-1/2}]$ is compact, it is enough to show that $[M_f,  (\overline{\partial}_{0,{\rm min}}+  \overline{\partial}_{0,{\rm max}}^t )\circ
(P_s + \Delta_{\overline{\partial},0,{\rm rel}}+\lambda)^{-1}]$ is compact. This follows easily by noticing that  $ (\overline{\partial}_{0,{\rm min}}+  \overline{\partial}_{0,{\rm max}}^t )\circ
(P_s + \Delta_{\overline{\partial},0,{\rm rel}}+\lambda)^{-1}$ $=(\overline{\partial}_{0,{\rm min}}+  \overline{\partial}_{0,{\rm max}}^t )\circ
(P_s + \Delta_{\overline{\partial},0,{\rm rel}}+\lambda)^{-1/2}\circ(P_s + \Delta_{\overline{\partial},0,{\rm rel}}+\lambda)^{-1/2}$, that $(\overline{\partial}_{0,{\rm min}}+  \overline{\partial}_{0,{\rm max}}^t )\circ
(P_s + \Delta_{\overline{\partial},0,{\rm rel}}+\lambda)^{-1/2}$ is bounded and that  $
(P_s + \Delta_{\overline{\partial},0,{\rm rel}}+\lambda)^{-1/2}$ is compact.

It remains to show that $Q_s$ is a continuous family of bounded operators with respect to the norm operator topology. Let $\phi$ be a smooth approximation of $\chi_{[-\epsilon,\epsilon]}$. Then we have $Q_s=f_s(\overline{\partial}_{0,{\rm min}} +  \overline{\partial}_{0,{\rm max}}^t)$ with $f_s:=x(\phi+s(1-\phi)+x^2)^{-1/2}$. Since $f_s:\mathbb{R}\times[0,1]\rightarrow \mathbb{R}$ is continuous and bounded the continuity of the family $Q_s$ follows by the well known properties of functional calculus.

\end{proof}

%
The proof of  Theorem \ref{Kequalities} now follows from  Prop. \ref{othermodel} and the Lemma \ref{familyhomotopy}.
\end{proof}

%
%

In the next corollary we point out some geometric consequences of the above theorem.
\begin{corollary}
Let $(X,h)$ be as in Th. \ref{Kequalities}. Then $\pi_*[\overline{\eth}_M]\in KK_0(C(X),\bbC)$ does not depend on the particular resolution $\pi:M\rightarrow X$ that we consider.
\end{corollary}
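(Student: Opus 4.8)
The plan is to deduce the statement immediately from Theorem \ref{Kequalities}. The crucial observation is that the class $[\overline{\eth}_{\operatorname{rel}}]\in KK_0(C(X),\bbC)$ is defined intrinsically in terms of the Hermitian complex space $(X,h)$ alone: it is the class associated by Proposition \ref{minclass} to the operator $\overline{\partial}_{0,\min}+\overline{\partial}_{0,\max}^t$ acting on $L^2\Omega^{0,\bullet}(\reg(X),h)$, and nothing in this construction refers to a choice of resolution. Hence the resolution-dependence of $\pi_*[\overline{\eth}_M]$ is controlled entirely by the right-hand side of \eqref{firstequality}, which is manifestly resolution-free.

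First I would fix two arbitrary resolutions $\pi_1\colon M_1\rightarrow X$ and $\pi_2\colon M_2\rightarrow X$ in the sense of \eqref{hiro}. Applying Theorem \ref{Kequalities} to each separately yields
$$(\pi_1)_*[\overline{\eth}_{M_1}]=[\overline{\eth}_{\operatorname{rel}}]\qquad\text{and}\qquad(\pi_2)_*[\overline{\eth}_{M_2}]=[\overline{\eth}_{\operatorname{rel}}],$$
where on the right we have the \emph{same} class, since it depends only on $(X,h)$. Comparing the two equalities gives $(\pi_1)_*[\overline{\eth}_{M_1}]=(\pi_2)_*[\overline{\eth}_{M_2}]$, which is precisely the asserted independence of the push-forward on the choice of resolution.

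There is essentially no obstacle to overcome at this stage: the entire substance has already been absorbed into Theorem \ref{Kequalities}, whose proof (via Proposition \ref{othermodel} and Lemma \ref{familyhomotopy}) identifies every $\pi_*[\overline{\eth}_M]$ with the single, resolution-independent relative class. The only point worth recording explicitly is that each $[\overline{\eth}_{M_i}]$ is itself a well-defined element of $KK_0(C(M_i),\bbC)$, independent of the auxiliary Hermitian metric chosen on $M_i$ --- a fact established in the paragraph preceding Theorem \ref{Kequalities} from the essential self-adjointness and ellipticity of $\overline{\pa}_0+\overline{\pa}_0^t$ on the smooth compact manifold $M_i$ --- so that the notation $[\overline{\eth}_{M_i}]$ is unambiguous even before the push-forward $(\pi_i)_*$ is taken. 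With this remark in place the corollary follows formally.
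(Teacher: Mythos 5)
Your proposal is correct and is precisely the argument the paper intends: the corollary is stated as an immediate consequence of Theorem \ref{Kequalities}, since $[\overline{\eth}_{\operatorname{rel}}]$ is defined purely from $(X,h)$ (Proposition \ref{minclass}) and hence applying \eqref{firstequality} to any two resolutions $\pi_1\colon M_1\rightarrow X$ and $\pi_2\colon M_2\rightarrow X$ identifies both push-forwards with the same class. Your additional remark that each $[\overline{\eth}_{M_i}]$ is metric-independent on the smooth compact manifold $M_i$ is also consistent with the paper's setup and makes the statement unambiguous.
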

We conclude this section with the following remark which provides another way to realize the class $\pi_*[\eth_M]$. Consider again a compact and irreducible Hermitian complex space $(X,h)$ and let $\pi:M\rightarrow X$ be a resolution of $X$. Let $D\subset M$ be the divisors with only normal crossings given by $D:=\pi^{-1}(\sing(X))$. Using the map $\pi$, we can induce a Hermitian metric $\gamma$ on $\reg(X)$ by defining $\gamma:=(\pi|_{M\setminus D}^{-1})^*(g|_{M\setminus D})$.  Let $\overline{\pa}^t_0:\Omega_c^{0,\bullet}(M\setminus D)\rightarrow \Omega_c^{0,\bullet}(M\setminus D)$ be the formal adjoint of $\overline{\pa}_0:\Omega_c^{0,\bullet}(M\setminus D)\rightarrow \Omega_c^{0,\bullet}(M\setminus D)$ with respect to $\gamma$. We have the following properties:
\begin{proposition}
\label{gamma}
Let $M$, $X$, $\pi$ and $\gamma$ be as defined above. The operator
\begin{equation}
\label{BP}
\overline{\pa}_0+\overline{\pa}^t_0:L^2\Omega^{0,\bullet}(\reg(X),\gamma)\rightarrow L^2\Omega^{0,\bullet}(\reg(X),\gamma)
\end{equation}
with domain given by $\Omega^{0,\bullet}_c(\reg(X))$ is  essentially self-adjoint. If we label by $\overline{\eth}_{\pi}:L^2\Omega^{0,\bullet}(\reg(X),\gamma)\rightarrow L^2\Omega^{0,\bullet}(\reg(X),\gamma)$ its unique (and hence self-adjoint) extension then $\overline{\eth}_{\pi}$ has discrete spectrum. The operator
\begin{equation}
\label{pushf}
\overline{\eth}_{\pi}:L^2\Omega^{0,\bullet}(\reg(X),\gamma)\rightarrow L^2\Omega^{0,\bullet}(\reg(X),\gamma)
\end{equation}
 defines an   unbounded Fredholm module for $C(X)$ and thus a class $[\overline{\eth}_{\pi}]\in KK_0(C(X),\mathbb{C}).$ Finally we have the following equality in $KK_0(C(X),\bbC)$:
\begin{equation}
\label{pushforward}
\pi_*[\overline{\eth}_M]=[\overline{\eth}_{\pi}].
\end{equation}
\end{proposition}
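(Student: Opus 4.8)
The plan is to transport the whole problem to the compact resolution $M$ via the map $\pi$. By the very definition of $\gamma=(\pi|_{M\setminus D}^{-1})^*(g|_{M\setminus D})$, the restriction $\pi|_{M\setminus D}:(M\setminus D,g|_{M\setminus D})\to(\reg(X),\gamma)$ is a biholomorphic isometry, so it preserves both the Dolbeault operator and the metric, hence $\overline{\pa}_0+\overline{\pa}_0^t$ together with its formal adjoints. Since $D$ is a divisor it has measure zero in $M$, so this isometry induces a unitary $U:L^2\Omega^{0,\bullet}(\reg(X),\gamma)\to L^2\Omega^{0,\bullet}(M,g)$, $U\omega=(\pi|_{M\setminus D})^*\omega$, under which multiplication by $f\in C(X)$ becomes multiplication by $\tilde f:=\pi^*f=f\circ\pi$. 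The first thing I would record is that $\tilde f\in C^\infty(M)$ whenever $f\in S(X)$: locally $\pi$ composed with a holomorphic embedding $X\hookrightarrow\mathbb{C}^N$ is holomorphic, hence smooth, and $f=\beta\circ(\text{embedding})$ with $\beta$ smooth, so $\tilde f=\beta\circ(\text{holomorphic map }M\to\mathbb{C}^N)$ is smooth on the compact manifold $M$.

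The analytic heart of the argument is to identify, under $U$, the closure of $\overline{\pa}_0+\overline{\pa}_0^t$ on the core $\Omega^{0,\bullet}_c(M\setminus D)$ with the self-adjoint operator $\overline{\eth}_M$ on $M$. This simultaneously gives the essential self-adjointness claimed in the statement (the closure is self-adjoint) and the relation $U\overline{\eth}_\pi U^{-1}=\overline{\eth}_M$. Since $\Omega^{0,\bullet}_c(M\setminus D)\subset\Omega^{0,\bullet}(M)$, which is a core for $\overline{\eth}_M$, the minimal closure is contained in $\overline{\eth}_M$; for the reverse inclusion I would prove that $\Omega^{0,\bullet}_c(M\setminus D)$ is graph-norm dense in $\mathcal{D}(\overline{\eth}_M)$. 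Concretely, for smooth $\omega\in\Omega^{0,\bullet}(M)$ I would multiply by a cutoff $\chi_\epsilon$ vanishing near $D$ and equal to $1$ away from it; because $D$ has real codimension $2$ the \emph{logarithmic} cutoff can be chosen with $\|d\chi_\epsilon\|_{L^2}\to 0$, so that $\chi_\epsilon\omega\to\omega$ in $L^2$ and $\overline{\eth}(\chi_\epsilon\omega)-\overline{\eth}\omega=(\chi_\epsilon-1)\overline{\eth}\omega+\cl{\overline{\pa}\chi_\epsilon}\,\omega\to 0$ in $L^2$, using $\|\cl{\overline{\pa}\chi_\epsilon}\,\omega\|_{L^2}\le\|d\chi_\epsilon\|_{L^2}\,\|\omega\|_{\infty}$. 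Each $\chi_\epsilon\omega$ lies in $\Omega^{0,\bullet}_c(M\setminus D)$, which gives the density. This codimension-$2$ cutoff is the step I expect to be the main obstacle; it is precisely the content of our earlier work \cite{BeiPiazza}, which I would invoke to obtain both essential self-adjointness and the identification $U\overline{\eth}_\pi U^{-1}=\overline{\eth}_M$.

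With this identification in place the remaining assertions are immediate. The discrete spectrum of $\overline{\eth}_\pi$ is inherited from that of $\overline{\eth}_M$, the Dolbeault operator of the closed manifold $M$, which is elliptic and hence has compact resolvent; equivalently $(\Id+\overline{\eth}_\pi^2)^{-1}$ is compact. For the unbounded Fredholm module I would take $H=L^2\Omega^{0,\bullet}(\reg(X),\gamma)$ with the representation by pointwise multiplication, the $\mathbb{Z}_2$-grading given by even/odd form degree, and the dense $*$-subalgebra $A=S(X)$ of $C(X)$ (Prop.~\ref{dense}). For $f\in S(X)$, conjugation by $U$ turns $M_f$ into $M_{\tilde f}$ with $\tilde f\in C^\infty(M)$; smoothness on the compact $M$ yields both the invariance of $\mathcal{D}(\overline{\eth}_M)$ under $M_{\tilde f}$ and the boundedness of $[\overline{\eth}_M,M_{\tilde f}]=\cl{\overline{\pa}\tilde f}$, whence $\mathcal{D}(\overline{\eth}_\pi)$ is $M_f$-invariant and $[\overline{\eth}_\pi,M_f]$ is bounded. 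Together with the compactness of $M_f(\Id+\overline{\eth}_\pi^2)^{-1}$ this produces the class $[\overline{\eth}_\pi]\in KK_0(C(X),\mathbb{C})$.

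Finally, I would read off $\pi_*[\overline{\eth}_M]=[\overline{\eth}_\pi]$ directly from the definition of the pushforward. Since analytic $K$-homology is covariant, $\pi_*[\overline{\eth}_M]$ is represented by the Fredholm module on $L^2\Omega^{0,\bullet}(M,g)$ with operator $\overline{\eth}_M(\Id+\overline{\eth}_M^2)^{-1/2}$ and with $C(X)$ acting through $\rho_M\circ\pi^*$, i.e. $f\mapsto M_{\tilde f}$. By the computations above, $U$ intertwines the representation $\rho_X$ with $\rho_M\circ\pi^*$ and $\overline{\eth}_\pi$ with $\overline{\eth}_M$, hence also the bounded transforms $\overline{\eth}_\pi(\Id+\overline{\eth}_\pi^2)^{-1/2}$ and $\overline{\eth}_M(\Id+\overline{\eth}_M^2)^{-1/2}$, and it preserves the gradings. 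Thus $U$ is a unitary equivalence of even Fredholm modules, and unitarily equivalent modules define the same class, giving the equality \eqref{pushforward} in $KK_0(C(X),\mathbb{C})$.
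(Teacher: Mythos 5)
Your proposal is correct, and its overall architecture coincides with the paper's: both proofs rest on the unitary $\pi^*:L^2\Omega^{0,\bullet}(\reg(X),\gamma)\rightarrow L^2\Omega^{0,\bullet}(M,g)$ (legitimate because $\pi|_{M\setminus D}$ is a holomorphic isometry and $D$ has measure zero in $M$), on identifying the closure of \eqref{BP} with $\overline{\eth}_M$, and on concluding \eqref{pushforward} by exhibiting a unitary equivalence of Fredholm modules intertwining $\rho_X$ with $\rho_M\circ\pi^*$ and the bounded transforms of $\overline{\eth}_\pi$ and $\overline{\eth}_M$. Where you differ is in how the two intermediate steps are discharged, and in both cases your route is a genuine alternative. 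First, for essential self-adjointness the paper simply cites \cite{FraBei}, Prop.~3.1, whereas you sketch a self-contained proof: $\Omega_c^{0,\bullet}(M\setminus D)$ is a graph-norm core for $\overline{\eth}_M$ because the exceptional set has real codimension $2$, so logarithmic cutoffs $\chi_\epsilon$ with $\|d\chi_\epsilon\|_{L^2}\rightarrow 0$ exist; your estimate $\|[\overline{\eth}_M,\chi_\epsilon]\,\omega\|_{L^2}\leq C\|d\chi_\epsilon\|_{L^2}\|\omega\|_{\infty}$ for smooth (hence bounded) $\omega$ on the compact $M$ then closes the argument, since $\Omega^{0,\bullet}(M)$ is by definition a core for $\overline{\eth}_M$. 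Second, for the Fredholm-module conditions the paper works downstairs on $\reg(X)$: it proves $d(f|_{\reg(X)})\in L^{\infty}\Omega^1(\reg(X),\gamma)$ for $f\in S(X)$ by combining Prop.~\ref{bbb} with the metric comparison $h\leq c\gamma$ (obtained from $\pi^*h\leq cg$ by compactness of $M$) and then invokes Prop.~\ref{bounded} for the invariance of the domain; you instead work upstairs on $M$, observing that $\tilde f=f\circ\pi$ is genuinely smooth on the compact manifold $M$ (a composition of the smooth local function $\beta$ with the holomorphic map $\phi\circ\pi$), so that domain invariance, boundedness of $[\overline{\eth}_M,M_{\tilde f}]$ and compactness of the resolvent become standard closed-manifold facts. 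Your version buys brevity and avoids the comparison of metrics; the paper's version keeps the verification intrinsic to $(\reg(X),\gamma)$, in the same style as Prop.~\ref{minclass}, and records as a by-product the $L^{\infty}$ bound on $df$ with respect to $\gamma$. Both are complete proofs of the statement.
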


\begin{proof}
The essential self-adjointness of \eqref{BP} follows by the fact that \eqref{BP} is unitarily equivalent to 
\begin{equation}
\label{BPB}
\overline{\pa}_0+\overline{\pa}^t_0:L^2\Omega^{0,\bullet}(M\setminus D,g|_{M\setminus D})\rightarrow L^2\Omega^{0,\bullet}(M\setminus D,g|_{M\setminus D})
\end{equation} with domain given by $\Omega^{0,\bullet}_c(M\setminus D)$. Now this latter operator is essentially self-adjoint and its unique closed extension coincides with the unique closed extension of \ref{hodgedol}, see \cite{FraBei}, Prop. 3.1. In order to show that \eqref{pushf} defines an even unbounded Fredholm module for $C(X)$ we take as Hilbert space $L^2\Omega^{0,\bullet}(\reg(X),\gamma)$,  the $*$-representation $\upsilon:C(X)\rightarrow \mathcal{B}(L^2\Omega^{0,\bullet}(\reg(X),\gamma))$ is given by pointwise moltiplication, the dense $*$-subalgebra is again $S(X)$, see Def.  \ref{smooth}, and the operator is clearly $\eth_{\pi}$. The proof now follows  by arguing as in the case of Prop. \ref{minclass}. We only need to justify that the domain of \eqref{pushf} is preserved by the action of $C(X)$. First, thanks to Prop. \ref{dense}, it is enough to show that  the domain of \eqref{pushf} is preserved by the action of $S(X)$. Let $\xi$ be defined as $\xi:=\pi^*h$. Then $\xi$ is a smooth semipositive definite Hermitian form on $M$ which is positive definite on $M\setminus D$.  Thus there exists a positive constant $c$ such that $\xi \leq cg$ or, equivalently, $h\leq c\gamma$ on $\reg(X)$.  Labeling by $h^*$ and $\gamma^*$ the Hermitian metrics induced on $T^*\reg(X)$ by $h$ and $\gamma$ respectively, we have $ch^*\geq \gamma^*$, see for instance Prop. 1.8 in \cite{BeiPiazza}.  
Hence, thanks to the latter inequality and Prop. \ref{bbb}, we can deduce that $d(f|_{\reg(X)})\in L^{\infty}\Omega^1(\reg(X),\gamma)$ for any $f\in S(X)$. Finally, by Prop. \ref{bounded}, we can conclude that the domain of \eqref{pushf} is preserved by the action of $S(X)$. For the class on the left hand side of \eqref{pushforward}, a representative is given by  the Fredholm module $(L^2\Omega^{0,\bullet}(M,g),\rho\circ \pi_M^*,\overline{\eth}\circ(1+\overline{\eth}^2)^{-\frac{1}{2}})$ where similarly to the previous case $\rho_M:C(M)\rightarrow \mathcal{B}(L^2\Omega^{0,\bullet}(M,g))$ is the representation given by pointwise multiplication and  $\rho_M\circ \pi^*$ is the representation acting in the following way: $((\rho_M\circ \pi^*)(f))\omega=\rho_M(f\circ \pi)\omega=(f\circ \pi)\omega$ for each $f\in C(X)$ and $\omega\in L^2\Omega^{0,\bullet}(M,g)$. Consider now the map $\pi|_{M\setminus D}:(M\setminus D, g|_{M\setminus D})\rightarrow (\reg(X),\gamma)$. It is clear that it is a holomorphic  isometry of Hermitian manifolds. Hence $\pi^*:L^2\Omega^{0,\bullet}(\reg(X),\gamma)\rightarrow L^2\Omega^{0,\bullet}(M\setminus D, g|_{M\setminus D})$ is a unitary equivalence of Hilbert spaces and as $D$ has measure zero in $M$ with respect to $\dvol_{g}$ we can conclude that $\pi^*:L^2\Omega^{0,\bullet}(\reg(X),\gamma)\rightarrow L^2\Omega^{0,\bullet}(M,g)$ is a unitary equivalence of Hilbert spaces. Now it is immediate to check that $\pi^*$ induces a unitary equivalence between the Fredholm modules $(L^2\Omega^{0,\bullet}(\reg(X),\gamma),\rho_X,\overline{\eth}_{\pi}\circ(1+\overline{\eth}_{\pi}^2)^{-\frac{1}{2}})$ and $(L^2\Omega^{0,\bullet}(M,g),\rho_M\circ \pi^*,\overline{\eth}\circ(1+\overline{\eth}^2)^{-\frac{1}{2}})$ and thereby we can conclude that $\pi_*[\overline{\eth}_M]=[\overline{\eth}_{\pi}]$ as desired. 
\end{proof}

\medskip
\noindent
{\bf More incarnations of $\pi_*[\overline{\eth}_M]$.} We end this section by exploring further realizations of the class $\pi_*[\overline{\eth}_M]$. More precisely, given a compact and irreducible compact complex space $X$ with only isolated singularities and a resolution $\pi:M\rightarrow X$, we will show the existence of some complete Hermitian metrics on $\reg(X)$ whose corresponding Hodge-Dolbeault operator induces a class in $KK_0(C(X),\mathbb{C})$ that equals $\pi_*[ \overline{\eth}_M]$.
We use the construction explained in the statement of Prop. \ref{othermodel}. In particular this makes crucial that $\dim(\sing(X))=0$. As a first example we start by considering a complete K\"ahler manifold $(N,h)$ with finite volume and pinched negative sectional curvatures, that is $-b^2\leq \sec_h\leq -a^2$ for some constants $0<a\leq b$. An important result concerning the geometry of such manifolds is the one proved in \cite{SiuYaucompact} by Siu and Yau. This result  provides the existence of a  compactification of $N$ in terms of a complex projective variety with only isolated singularities. More precisely if $(N,h)$ is a K\"ahler manifold as above then there exists a projective variety $V\subset \mathbb{C}\mathbb{P}^n$ with only isolated singularities such that $\reg(V)$ and $N$ are biholomorphic.

\begin{proposition}
\label{SiuYau}
Let $(N,h)$ be a complete K\"ahler manifold  with finite volume. Assume that the sectional curvatures of $(N,h)$ satisfies  $-b^2\leq \sec_h\leq -a^2$ for some constants $0<a\leq b$. Let $V\subset \mathbb{C}\mathbb{P}^n$ be the Siu--Yau compactification of $N$ and let $\pi:M\rightarrow V$ be a resolution of $V$. Let $\phi:N\rightarrow \reg(V)$ be a biholomorphism and let $\rho$ be the complete K\"ahler metric on $\reg(V)$ induced by $h$ through $\phi$. Then we have the following equality in $KK_0(C(V),\mathbb{C})$:  $$[(\overline{\partial}_{0} +  \overline{\partial}_{0}^* ) \circ L^{-1/2}]=\pi_*[\overline{\eth}_M].$$
\end{proposition}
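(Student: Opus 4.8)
The plan is to deduce the statement directly from Proposition \ref{othermodel}, applied to the compact irreducible complex space $X=V$ together with the resolution $\pi\colon M\to V$ and the metric $\rho$. By the Siu--Yau theorem \cite{SiuYaucompact} the variety $V$ has only isolated singularities, so $\dim(\sing(V))=0$, and since $\reg(V)\cong N$ is connected $V$ is irreducible; thus the hypotheses on $V$ in Proposition \ref{othermodel} are met. Moreover $(\reg(V),\rho)$ is isometric, via $\phi$, to the complete manifold $(N,h)$, so $\rho$ is complete; hence, by the background facts recalled at the end of Section 2, $\overline{\partial}_{0,\min}=\overline{\partial}_{0,\max}$, the operator $\overline{\partial}_{0,\min}+\overline{\partial}_{0,\max}^t$ of Proposition \ref{othermodel} coincides with the essentially self-adjoint operator $\overline{\partial}_0+\overline{\partial}_0^*$ appearing in the statement, and $H^{0,q}_{2,\overline{\partial}_{\min}}(\reg(V),\rho)=H^{0,q}_{2,\overline{\partial}_{\max}}(\reg(V),\rho)$. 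It therefore remains only to verify the two analytic hypotheses of Proposition \ref{othermodel}: finite dimensionality of each $H^{0,q}_{2,\overline{\partial}_{\min}}(\reg(V),\rho)$, and the matching of Euler characteristics.

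To establish both, I would identify the $L^2$-$\overline{\partial}$-cohomology of $(\reg(V),\rho)$ with the Dolbeault cohomology of the resolution, namely $H^{0,q}_{2,\overline{\partial}}(\reg(V),\rho)\cong H^{0,q}_{\overline{\partial}}(M)\cong H^q(M,\mathcal{O}_M)$ for every $q$. The geometric input is that a complete finite-volume K\"ahler manifold with pinched negative sectional curvature has cusp-type ends, and under the Siu--Yau compactification these ends correspond to small deleted neighbourhoods of the isolated singular points of $V$, on which $\rho$ is comparable to a Poincar\'e-type metric on $M\setminus D$ near the normal-crossings divisor $D=\pi^{-1}(\sing(V))$. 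With this comparison in hand, the identification follows from the $L^2$-Dolbeault computation for Poincar\'e-type metrics: I would invoke the authors' earlier work \cite{BeiPiazza} together with Timmerscheidt's theorem \cite{Esnault}, which in the trivial-coefficient case $p=0$ identifies the $L^2$-Dolbeault cohomology with $H^q$ of the logarithmic sheaf $\Omega^0_M(\log D)=\mathcal{O}_M$. In particular each group is finite dimensional, so that $\im(\overline{\partial}_{0,q-1,\min})$ is closed (by the facts recalled after \eqref{chimar}), and
\[
\sum_q (-1)^q \dim H^{0,q}_{2,\overline{\partial}_{\min}}(\reg(V),\rho)=\chi(M,\mathcal{O}_M)=\sum_q (-1)^q \dim H^{0,q}_{\overline{\partial}}(M),
\]
which is exactly the Euler-characteristic hypothesis required.

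Once these two hypotheses are verified, Proposition \ref{othermodel} asserts that $T=(\overline{\partial}_0+\overline{\partial}_0^*)\circ L^{-1/2}$ defines a class in $KK_0(C(V),\mathbb{C})$ and that $[T]=\pi_*[\overline{\eth}_M]$, which is precisely the claimed equality in $KK_0(C(V),\mathbb{C})$.

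The main obstacle is the cohomological identification $H^{0,q}_{2,\overline{\partial}}(\reg(V),\rho)\cong H^q(M,\mathcal{O}_M)$, and in particular the passage from the pinched-negative-curvature metric $h$ to the Poincar\'e-type model to which the Timmerscheidt-type results apply. The delicate points are, first, controlling the asymptotic geometry of the cusp ends of $(N,h)$ precisely enough to place $\rho$ in the metric-comparison class (not merely the topological class) demanded by the $L^2$-Dolbeault theory, and second, ensuring that the $L^2$-$\overline{\partial}$-cohomology is insensitive to the allowed deformation of the metric within this class, so that the clean identification holds for $\rho$ itself and not only for an exact Poincar\'e metric. Since Proposition \ref{othermodel} ultimately needs only the alternating sum, a viable fallback, should the groupwise isomorphism prove hard to pin down, is to compute directly the $L^2$-index of $\overline{\partial}_0+\overline{\partial}_0^*$ via the cusp-end contributions and compare it with $\chi(M,\mathcal{O}_M)$.
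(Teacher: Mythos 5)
Your reduction is exactly the one the paper uses: apply Proposition \ref{othermodel} to $X=V$ with the resolution $\pi\colon M\to V$ and the metric $\rho$, noting that the Siu--Yau compactification has only isolated singularities (so $\dim(\sing(V))=0$) and that completeness of $\rho$ gives $\overline{\partial}_{0,\min}=\overline{\partial}_{0,\max}$, so that the operator $T$ of Proposition \ref{othermodel} is indeed $(\overline{\partial}_{0}+\overline{\partial}_{0}^{*})\circ L^{-1/2}$. Up to that point your proposal and the paper's proof coincide.

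The gap lies in the verification of the two analytic hypotheses of Proposition \ref{othermodel}, which is the entire content of the proof. The paper disposes of this in one line by citing its own earlier work, \cite{BeiPiazza} Th.~2.12, a result established precisely for complete finite-volume K\"ahler metrics with pinched negative sectional curvature. You instead propose to place $\rho$, near the cusps, in the quasi-isometry class of a Poincar\'e-type metric on $M\setminus D$ near $D=\pi^{-1}(\sing(V))$, and then quote Timmerscheidt \cite{Esnault}; this is the route the paper uses for the genuinely Poincar\'e-type case, Proposition \ref{Poinc}, but it does not apply here. The metric comparison is not only left unproved in your write-up (you flag it yourself as ``the main obstacle''), it is false in general: for the model examples satisfying the curvature hypothesis, namely finite-volume ball quotients, the cusp cross-sections are compact nilmanifolds \emph{all} of whose directions collapse exponentially, including the directions that become tangent to the exceptional divisor in the resolution, whereas a Poincar\'e-type metric keeps the divisor directions of bounded size and collapses only the angular direction around $D$. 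The two metrics therefore lie in different quasi-isometry classes, and since $L^2$-$\overline{\partial}$-cohomology is only a quasi-isometry invariant, Timmerscheidt's identification cannot be transported to $\rho$ in this way. Your fallback (a direct $L^2$-index computation from the cusp-end contributions) is named but not carried out. Consequently the key hypotheses --- finite dimensionality of $H^{0,q}_{2,\overline{\partial}_{\min}}(\reg(V),\rho)$ for every $q$ and the equality of its Euler characteristic with $\sum_q(-1)^q\dim H^{0,q}_{\overline{\partial}}(M)$ --- remain unestablished in your argument; the correct input is \cite{BeiPiazza} Th.~2.12 (or an independent proof of that result), not a Poincar\'e-metric comparison.
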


\begin{proof}
According to \cite{BeiPiazza} Th. 2.12 we know that the hypothesis of Prop. \ref{othermodel} are fulfilled by $V$ and $\rho$. We can therefore conclude that $[(\overline{\partial}_{0} +  \overline{\partial}_{0}^* ) \circ L^{-1/2}]=\pi_*[\overline{\eth}_M]$ in $KK_0(C(V),\mathbb{C})$ as desired.
\end{proof}

The next example we discus is given by the so called Saper-type K\"ahler metrics. These are K\"ahler metrics introduced by Saper in \cite{Saperisolated} in the setting of complex projective varieties with isolated singularities and whose construction was later generalized  by Grant-Melles and Milman in \cite{MellesMilmanPacific} and \cite{MellesMilmanToulouse} to the case of an arbitrary subvariety of a compact K\"ahler manifold. We recall now the definition of Saper-type metric following \cite{MellesMilmanToulouse}.
 Let $V$ be a singular subvariety of a compact complex manifold $M$ and let $\omega$ be the fundamental $(1,1)$-form of a Hermitian metric on $M$. Let $\pi:\wt{M}\rightarrow M$ be a holomorphic map of a compact complex manifold $\wt{M}$ to $M$ whose exceptional set $E$ is a divisor with normal crossing in $\wt{M}$ and such that the restriction $$\pi|_{\wt{M}\setminus E}:\wt{M}\setminus E \longrightarrow M\setminus \sing(V)$$ is a biholomorphism. Let $L_E$ be the line bundle on $\wt{M}$ associated to $E$. Let $s:\wt{M}\rightarrow L_E$ be a global holomorphic section whose  associated divisor $(s)$ equals $E$ (in particular $s$ vanishes exactly on $E$). Let $\gamma$ be any Hermitian metric on $L_E$ such that $\|s\|_{\gamma}$, the norm of $s$ with respect to $\gamma$, satisfies $\|s\|_{\gamma}<1$. A Hermitian metric on $\wt{M}\setminus E$ which is quasi-isometric to a metric with fundamental $(1,1)$-form $$l\pi^*\omega-\frac{\sqrt{-1}}{2\pi}\pa\overline{\pa}\log(\log\|s\|_{\gamma}^2)^2$$ for $l$ a positive integer, will be called a \textbf{Saper-type metric}, distinguished with respect to the map $\pi$. The corresponding metric on $M\setminus {\sing{V}}\cong \wt{M}\setminus E$ and its restriction to $V\setminus \sing{V}$ are also called Saper-type metric.  For existence results we refer to \cite{Saperisolated}, \cite{MellesMilmanPacific} and \cite{MellesMilmanToulouse}.

\begin{proposition}
\label{Saper}
Let $N$ be a compact K\"ahler manifold with K\"ahler form $\omega$ and let $V$ be an analytic subvariety  of  $N$ of complex dimension $v$ such that $\dim(\sing(V))=0$. Let  $\pi:M\rightarrow V$ be a resolution of $V$. Finally let $g_S$ be  a Saper-type metric  on $\reg (V)$ as constructed in  \cite{Saperisolated} or \cite{MellesMilmanToulouse}. Then we have the following equality in $KK_0(C(V),\mathbb{C})$:  $$[(\overline{\partial}_{0} +  \overline{\partial}_{0}^* ) \circ L^{-1/2}]=\pi_*[ \overline{\eth}_M].$$
\end{proposition}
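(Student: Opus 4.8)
The plan is to reduce the statement to Proposition \ref{othermodel}, exactly as was done for the Siu--Yau compactification in Proposition \ref{SiuYau}. Since $\dim(\sing(V))=0$ by hypothesis, Proposition \ref{othermodel} becomes available as soon as its two analytic requirements are checked for the pair $(V,g_S)$: that each minimal $L^2$-Dolbeault cohomology group $H^{0,q}_{2,\overline{\partial}_{\min}}(\reg(V),g_S)$ is finite dimensional, and that the $L^2$ holomorphic Euler characteristic agrees with that of the resolution, i.e. $\sum_q (-1)^q \dim H^{0,q}_{2,\overline{\partial}_{\min}}(\reg(V),g_S)=\sum_q (-1)^q \dim H^{0,q}_{\overline{\partial}}(M)$. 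First I would record that a Saper-type metric is complete near $\sing(V)$: the term $-\frac{\sqrt{-1}}{2\pi}\partial\overline{\partial}\log(\log\|s\|_\gamma^2)^2$ produces Poincar\'e-type ends along $E$, so $g_S$ is complete on $\reg(V)$. By the remarks in the background section this forces $\overline{\partial}_{0,q,\min}=\overline{\partial}_{0,q,\max}$, so that the operator $\overline{\partial}_0+\overline{\partial}_0^*$ appearing in the statement is precisely the unique (essentially self-adjoint) closed extension entering Proposition \ref{othermodel}.

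The heart of the argument is the cohomological identification that is the very reason Saper introduced these metrics: for isolated singularities one has a canonical isomorphism
\[
H^{0,q}_{2,\overline{\partial}_{\min}}(\reg(V),g_S)\cong H^q(M,\mathcal{O}_M)
\]
for every $q$. I would invoke this from \cite{Saperisolated} and its extension in \cite{MellesMilmanToulouse}, or from the corresponding statement tailored to the present $L^2$-$\overline{\partial}$ framework in \cite{BeiPiazza}. Granting it, both hypotheses of Proposition \ref{othermodel} follow at once: the right-hand side is finite dimensional because $M$ is a compact complex manifold, which gives finiteness of the $L^2$-cohomology; and since Dolbeault's theorem on $M$ identifies $H^{0,q}_{\overline{\partial}}(M)$ with $H^q(M,\mathcal{O}_M)$, the isomorphism holds degreewise, so a fortiori the two alternating sums coincide.

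With the hypotheses verified, Proposition \ref{othermodel} applies verbatim and yields $[(\overline{\partial}_0+\overline{\partial}_0^*)\circ L^{-1/2}]=\pi_*[\overline{\eth}_M]$ in $KK_0(C(V),\mathbb{C})$, which is the assertion. I expect the only genuine obstacle to be the cohomological identification above: it is not formal but relies on the sharp analytic properties of Saper-type metrics, the $\log\log$ weight being precisely calibrated so that $L^2$ harmonic $(0,q)$-forms on $\reg(V)$ correspond to Dolbeault classes on the resolution $M$. One must also take care that the version cited is stated for the minimal extension and is consistent with the completeness recorded in the first step; everything else is bookkeeping around Proposition \ref{othermodel}.
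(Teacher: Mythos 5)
Your proposal is correct and follows essentially the same route as the paper: the paper's own proof simply quotes the identification $H^{0,q}_{2,\overline{\partial}}(\reg(V),g_S)\cong H^{0,q}_{\overline{\partial}}(M)$ (Saper's Theorem 10.2, or Theorem 2.4 of \cite{BeiPiazza}) and concludes at once from Proposition \ref{othermodel}. Your extra remarks --- completeness of $g_S$ forcing $\overline{\partial}_{0,\min}=\overline{\partial}_{0,\max}$ so that the operator in the statement is the one entering Proposition \ref{othermodel}, and the degreewise isomorphism yielding both the finiteness and the Euler-characteristic hypotheses --- are precisely the bookkeeping the paper leaves implicit.
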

\begin{proof}
According to \cite{Saperisolated} Th.10.2 or \cite{BeiPiazza} Th.2.4 we know that $H^{0,q}_{2,\overline{\pa}}(\reg (V), g_S)\cong H^{0,q}_{\overline{\pa}} (\widetilde{V})$ for each $q=0,...,v$. Hence the desired conclusion follows now by Prop. \ref{othermodel}.
\end{proof}

As a last example of this subsection we discuss the so called Poincar\'e-type K\"ahler metrics. Let $X$ be a compact and irreducible  complex space. Assume that   $\dim(\sing(X))=0$. Assume that there exists a resolution of $X$, $\pi:M\rightarrow X$, carrying a K\"ahler metric $\upsilon$ with fundamental form $\omega$. Let $D$ be the normal crossings divisors given by $D=\pi^{-1}(\sing(X))$. Let $L_D$ be the line bundle on $M$ associated to $D$. Let $s:M\rightarrow L_D$ be a global holomorphic section whose  associated divisor $(s)$ equals $D$. Let $\tau$ be any Hermitian metric on $L_D$ such that $\|s\|_{\tau}<1$. A K\"ahler  metric $g$ on $M\setminus D$ which is quasi-isometric to a K\"ahler metric with fundamental $(1,1)$-form $$b\omega-\frac{\sqrt{-1}}{2\pi}\pa\overline{\pa}\log(\log\|s\|_{\tau}^2)^2$$ for $b$ a positive integer, will be called a \textbf{Poincar\'e-type metric}.

\begin{proposition}
\label{Poinc}
Let $\omega$, $\pi:M\rightarrow X$, $g$ and $D$ be as above. Let us label by $g_P$ the K\"ahler metric on $\reg(X)$ induced by $g$ through $\pi$.  Then we have the following equality in $KK_0(C(V),\mathbb{C})$:  $$[(\overline{\partial}_{0} +  \overline{\partial}_{0}^* ) \circ L^{-1/2}]=\pi_*[ \overline{\eth}_M ].$$
\end{proposition}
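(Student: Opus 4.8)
The plan is to derive Prop.~\ref{Poinc} from Prop.~\ref{othermodel}, in exactly the same way as Prop.~\ref{SiuYau} and Prop.~\ref{Saper} were obtained. Since $\dim(\sing(X))=0$ by hypothesis and $\pi\colon M\to X$ is a resolution, the only things I would need to check are the two analytic hypotheses of Prop.~\ref{othermodel}: that $\dim H^{0,q}_{2,\overline{\pa}_{\min}}(\reg(X),g_P)<\infty$ for each $q$, and that
\begin{equation*}
\sum_q (-1)^q \dim H^{0,q}_{2,\overline{\pa}_{\min}}(\reg(X),g_P)=\sum_q (-1)^q \dim H^{0,q}_{\overline{\pa}}(M).
\end{equation*}
A preliminary observation is that a Poincar\'e-type metric is complete, so by the material recalled in the Background section the minimal and maximal closed extensions of $\overline{\pa}_{0,q}$ agree; hence the groups above are unambiguous and may equally well be computed on $(M\setminus D,g)$ through the biholomorphic isometry $\pi|_{M\setminus D}\colon(M\setminus D,g)\to(\reg(X),g_P)$.

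The key step is the cohomological identification
\begin{equation*}
H^{0,q}_{2,\overline{\pa}}(\reg(X),g_P)\cong H^{0,q}_{\overline{\pa}}(M)\qquad\text{for all }q,
\end{equation*}
which I would take from \cite{BeiPiazza}, the same reference used for the Siu--Yau and Saper cases. Granting it, finite-dimensionality is automatic because $M$ is compact, and the two Euler characteristics above coincide term by term, so both hypotheses of Prop.~\ref{othermodel} are satisfied. Conceptually, such an identification is proved by Hodge theory: one represents each class by an $L^2$-harmonic $(0,q)$-form and then uses the Poincar\'e-type growth of $g$ near the normal crossing divisor $D$ to show that these forms extend across $D$ to honest Dolbeault representatives on $M$, while conversely no spurious $L^2$-classes appear. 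The growth of the metric near $D$ is exactly what makes the boundary contributions vanish and excludes $L^2$-holomorphic forms with poles along $D$.

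With the hypotheses verified, Prop.~\ref{othermodel} applies directly and gives
\begin{equation*}
[(\overline{\pa}_{0}+\overline{\pa}_{0}^{*})\circ L^{-1/2}]=\pi_*[\overline{\eth}_M]\quad\text{in}\quad KK_0(C(X),\mathbb{C}),
\end{equation*}
which is the assertion. I expect the genuine obstacle to lie entirely in the $L^2$-cohomology computation for $g_P$: controlling the mapping properties of $\overline{\pa}$ in $L^2$ near the degeneration locus $D$ is the real analytic content, whereas every step after the invocation of Prop.~\ref{othermodel} is formal.
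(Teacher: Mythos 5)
Your proposal follows essentially the same route as the paper: reduce to Prop.~\ref{othermodel} by establishing the identification $H^{0,q}_{2,\overline{\partial}}(\reg(X),g_P)\cong H^{0,q}_{\overline{\partial}}(M)$ for all $q$ (unambiguous because the Poincar\'e-type metric is complete), from which finite-dimensionality and the Euler characteristic equality are immediate. The only discrepancy is the attribution of that key identification: the paper takes it from Timmerscheidt \cite{Esnault} or Zucker \cite{Zucker}, whereas \cite{BeiPiazza}, which you cite, is the reference the paper uses for the Saper-type and pinched negative curvature cases, not the Poincar\'e-type case.
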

\begin{proof}
According to \cite{Esnault} or \cite{Zucker} we know that $H^{0,q}_{2,\overline{\partial}}(\reg(X),g_P)\cong H^{0,q}_{\overline{\partial}}(M)$ for each $q=0,...,m$. Now the statement follow by Prop. \ref{othermodel}.
\end{proof}
We conclude this section by providing a {\bf summary} of the various incarnations of $\pi_*[\eth_M]$. Once more let $(X,h)$ be a compact and irreducible Hermitian space with $\dim(\sing(X))=0$ and let $\pi:M\rightarrow X$ be a resolution. We have seen six different ways to construct  $\pi_*[ \overline{\eth}_M]$. More precisely:
\begin{itemize}
\item $\pi_*[ \overline{\eth}_M ]=[(\overline{\partial}_{0} +  \overline{\partial}_{0}^* ) \circ L^{-1/2}]$, where the latter class can be constructed by using a Saper-type K\"ahler metric, a Poincar\'e metric, the metric $h$ or, when it exists, a complete K\"ahler metric of finite volume and pinched negative sectional curvatures. See Prop. \ref{Saper}, \ref{Poinc}, \ref{othermodel} and Prop. \ref{SiuYau}, respectively.\\

\item $\pi_*[ \overline{\eth}_M ]=[\overline{\eth}_{\mathrm{rel}}]$, where the latter class is constructed by using the metric $h$. See Th. \ref{Kequalities}.\\

\item $\pi_*[ \overline{\eth}_M ]=[\overline{\eth}_{\pi}]$, where the latter class is built by using a Hermitian metric $\gamma$ induced on $\reg(X)$ through $\pi$ by an arbitrarily fixed Hermitian metric $g$ on $M$. Moreover we remark that in this setting we can drop the hypothesis that $\dim(\sing(X))=0$. See Prop. \ref{gamma}.
\end{itemize}


\section{Relationship with the Baum-Fulton-MacPherson class}
Let $V\subset \mathbb{C}\mathbb{P}^n$ be a complex projective variety. We will always assume that $V$ is reduced and irreducible. In this section we investigate the relationship between the Baum-Fulton-MacPherson class  and the analytic $K$-homology classes defined by self-adjoint extensions of $\overline{\pa}_0+\overline{\pa}_0^t:\Omega_c^{0,\bullet}(\reg(V))\rightarrow\Omega_c^{0,\bullet}(\reg(V))$ under the assumption $\dim_{\mathbb{C}}V\leq 2$. The main results show the existence of complex  projective varieties where the Baum-Fulton-MacPherson class cannot be realized as a class induced by a self-adjoint extension of $\overline{\pa}_0+\overline{\pa}_0^t$. We recall that the Baum-Fulton-MacPherson class of a complex projective variety $V$ is the $K$-homology class defined as $\alpha_V([\mathcal{O}_V])\in K_0^{\text{top}}(V)$ and was  introduced by Baum-Fulton-MacPherson in their seminal papers \cite{BFMI} and \cite{BFMII}.  In the latter formula $\alpha_V:K^{\text{hol}}_0(V)\rightarrow K^{\text{top}}_{0}(V)$  denotes the morphism constructed by Baum-Fulton-MacPherson between the Grothedieck group of coherent analytic sheaves on $V$ and the topological $K$-homology of $V$. We refer to \cite{BFMI} and \cite{BFMII} for definitions and properties.

\smallskip
\noindent
 As anticipated in the Introduction we set
$${\rm Td}_K^{{\rm BFM}}(V) := \alpha_V([\mathcal{O}_V])\in K_* (V)\qquad\text{and}\qquad {\rm Td}_*^{{\rm BFM}}(V)=\Ch_*
(\alpha_V([\mathcal{O}_V]))\in H_* (V,\mathbb{Q}) . $$

\begin{theorem}
\label{noL2}
Let $V\subset \mathbb{C}\mathbb{P}^n$ be a complex projective curve such that $\sing(V)\neq \emptyset$. Let $h$ be the Hermitian metric on $\reg(V)$ induced by the Fubini-Study metric of  $\mathbb{C}\mathbb{P}^n$. Then 
for any closed extension $\overline{D}:L^2(\reg(V),h)\rightarrow L^2\Omega^{0,1}(\reg(V),h)$ of $\overline{\partial}:C^{\infty}_c(\reg(V))\rightarrow \Omega^{0,1}_c(\reg(V))$ we have that $$\Ind(\overline{D})\not= \chi(V,\mathcal{O}_V).$$ 
\end{theorem}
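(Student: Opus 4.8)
The plan is to prove something slightly stronger than the stated inequality, namely that the index of \emph{every} closed extension is \emph{strictly larger} than $\chi(V,\mathcal{O}_V)$, so that equality can never occur. The starting observation is that any closed extension $\overline{D}$ of $\overline{\partial}\colon C^\infty_c(\reg(V))\to \Omega^{0,1}_c(\reg(V))$ satisfies
$$\overline{\partial}_{\min}\subseteq \overline{D}\subseteq \overline{\partial}_{\max},$$
since $\overline{\partial}_{\min}$ is by definition the smallest closed extension and $\overline{\partial}_{\max}$ the largest. Because $\sing(V)$ is a finite set of points, the results of {\O}vrelid--Ruppenthal and Bei (\cite{OvRu}, \cite{FraBei}, \cite{Bei2017}) guarantee that both $\overline{\partial}_{\min}$ and $\overline{\partial}_{\max}$ have closed range and finite dimensional kernel and cokernel. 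Since $\Ker(\overline{D})\subseteq \Ker(\overline{\partial}_{\max})$ and $\im(\overline{D})\supseteq \im(\overline{\partial}_{\min})$, it follows at once that $\overline{D}$ is itself Fredholm; moreover, as $\Ker(\overline{\partial}_{\min})\subseteq \Ker(\overline{D})$ while $\coker(\overline{D})$ is a quotient of $\coker(\overline{\partial}_{\min})$, the index is monotone in the domain: $\Ind(\overline{D})\ge \Ind(\overline{\partial}_{\min})$. Thus it suffices to prove the single inequality $\Ind(\overline{\partial}_{\min})>\chi(V,\mathcal{O}_V)$.

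The second step is to identify $\Ind(\overline{\partial}_{\min})$ with the Euler characteristic of the normalization. Let $\nu\colon\widetilde{V}\to V$ be the normalization; then $\widetilde{V}$ is a smooth compact connected Riemann surface and $\nu$ restricts to a biholomorphism $\widetilde{V}\setminus \nu^{-1}(\sing(V))\cong \reg(V)$. The crucial analytic input is that the minimal $L^2$-$\overline{\partial}$-cohomology of $(\reg(V),h)$ reproduces the Dolbeault cohomology of $\widetilde{V}$, that is $H^{0,q}_{2,\overline{\partial}_{\min}}(\reg(V),h)\cong H^q(\widetilde{V},\mathcal{O}_{\widetilde{V}})$ for $q=0,1$. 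I would derive this using the conformal invariance, in real dimension two, of the $L^2$ norms on $(0,1)$- and $(1,0)$-forms together with a growth analysis of the (possibly cuspidal) metric near each puncture: in the minimal domain no poles are admissible, so $\Ker(\overline{\partial}_{\min})$ consists of the functions extending holomorphically across the punctures, namely the constants $H^0(\widetilde{V},\mathcal{O}_{\widetilde{V}})\cong\mathbb{C}$, while by the Hodge-type duality between the minimal and maximal extensions $\coker(\overline{\partial}_{\min})\cong \overline{H^0(\widetilde{V},K_{\widetilde{V}})}$, of dimension $g(\widetilde{V})$. Hence
$$\Ind(\overline{\partial}_{\min})=1-g(\widetilde{V})=\chi(\widetilde{V},\mathcal{O}_{\widetilde{V}}).$$

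The final step is purely algebraic. The short exact sequence of sheaves on $V$
$$0\longrightarrow \mathcal{O}_V\longrightarrow \nu_*\mathcal{O}_{\widetilde{V}}\longrightarrow \mathcal{S}\longrightarrow 0,$$
where $\mathcal{S}=\nu_*\mathcal{O}_{\widetilde{V}}/\mathcal{O}_V$ is a skyscraper sheaf supported on $\sing(V)$ of finite length $\delta=\sum_{p\in\sing(V)}\delta_p$, together with the finiteness of $\nu$ (so that $\chi(\widetilde{V},\mathcal{O}_{\widetilde{V}})=\chi(V,\nu_*\mathcal{O}_{\widetilde{V}})$), yields $\chi(V,\mathcal{O}_V)=\chi(\widetilde{V},\mathcal{O}_{\widetilde{V}})-\delta$. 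Since $V$ is singular, each $\delta_p\ge 1$ and therefore $\delta>0$. Combining the three steps,
$$\Ind(\overline{D})\ge \Ind(\overline{\partial}_{\min})=\chi(\widetilde{V},\mathcal{O}_{\widetilde{V}})=\chi(V,\mathcal{O}_V)+\delta>\chi(V,\mathcal{O}_V),$$
which gives the desired strict inequality, and in particular $\Ind(\overline{D})\ne\chi(V,\mathcal{O}_V)$.

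I expect the main obstacle to be the second step, the identification $\Ind(\overline{\partial}_{\min})=\chi(\widetilde{V},\mathcal{O}_{\widetilde{V}})$, since this is where the genuine analysis of the incomplete metric near the singular points enters; the monotonicity of the index and the computation of $\delta$ are comparatively routine. The delicate points are the closed-range and finiteness statements and the precise behaviour of the minimal and maximal domains near each puncture, for which I would lean on the cited work of {\O}vrelid--Ruppenthal and Bei rather than redo the estimates.
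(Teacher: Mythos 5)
Your proposal is correct and follows essentially the same route as the paper's proof: reduce to the minimal extension via monotonicity of the index in the domain, identify $\Ind(\overline{\partial}_{\min})$ with $\chi(M,\mathcal{O}_M)$ for a resolution (equivalently the normalization, since $V$ is a curve) using the cited $L^2$ results, and conclude from $\chi(V,\mathcal{O}_V)=\chi(M,\mathcal{O}_M)-\sum_p\delta_p$ with $\delta_p\geq 1$ at singular points because a normal curve point is smooth. The only cosmetic differences are that you obtain monotonicity from the inclusions $\Ker(\overline{\partial}_{\min})\subseteq\Ker(\overline{D})$ and $\im(\overline{\partial}_{\min})\subseteq\im(\overline{D})$, while the paper factors through the Fredholm inclusion $\mathcal{D}(\overline{\partial}_{\min})\hookrightarrow\mathcal{D}(\overline{D})$ to get $\Ind(\overline{D})=\Ind(\overline{\partial}_{\min})+\dim(\mathcal{D}(\overline{D})/\mathcal{D}(\overline{\partial}_{\min}))$, and that the key index identity you sketch analytically is simply quoted in the paper from the literature on curves.
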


\begin{proof}
According to \cite{BPScurves} we know that both $\overline{\pa}_{\max/\min}:L^2(\reg(X),h)\rightarrow L^2\Omega^{0,1}(\reg(X),h)$ are Fredholm operators on their domains endowed with the corresponding graph norm. Let $i: \mathcal{D}(\overline{\pa}_{\min})\hookrightarrow\mathcal{D}(\overline{\pa}_{\max})$ be the natural inclusion of $\mathcal{D}(\overline{\pa}_{\min})$ into $\mathcal{D}(\overline{\pa}_{\max})$. Endowing both $\mathcal{D}(\overline{\pa}_{\min})$ and $\mathcal{D}(\overline{\pa}_{\max})$ with the corresponding graph norms and by the fact that $\overline{\pa}_{\min}=\overline{\pa}_{\max}\circ i$ we get that  $i: \mathcal{D}(\overline{\pa}_{\min})\hookrightarrow\mathcal{D}(\overline{\pa}_{\max})$ is a Fredholm operator whose index is given by $\Ind(i)=-\dim(\mathcal{D}(\overline{\pa}_{\max})/\mathcal{D}(\overline{\pa}_{\min}))$. Altogether this tells us that $$\Ind(\overline{\pa}_{\max})=\Ind(\overline{\pa}_{\min})+\dim(\mathcal{D}(\overline{\pa}_{\max})/\mathcal{D}(\overline{\pa}_{\min})).$$
It is now easy to deduce that any other closed extension $\overline{D}:L^2(\reg(V),h)\rightarrow L^2\Omega^{0,1}(\reg(V),h)$ of $\overline{\partial}:C^{\infty}_c(\reg(V))\rightarrow \Omega^{0,1}_c(\reg(V))$ is Fredholm on its domain endowed with the graph norm. Indeed $\ker(\overline{D})\subset \ker(\overline{\partial}_{\max})$ and so  $\dim(\ker(\overline{D}))<\infty$. On the other hand $\im(\overline{\partial}_{\min})\subset \im(\overline{D})$. Thus we have a natural surjective map $L^2\Omega^{0,1}(\reg(V),h)/\im(\overline{\partial}_{\min})\rightarrow L^2\Omega^{0,1}(\reg(V),h)/\im(\overline{D})$ which tells us that $L^2\Omega^{0,1}(\reg(V),h)/\im(\overline{D})$ is finite dimensional. Moreover, the same argument above,  shows that the index of $\overline{D}$ obeys $\Ind(\overline{D})=\Ind(\overline{\pa}_{\min})+\dim(\mathcal{D}(\overline{D})/\mathcal{D}(\overline{\pa}_{\min}))$. In particular we get that 
\begin{equation}
\label{inequalities}
\Ind(\overline{\pa}_{\min})\leq \Ind(\overline{D})\leq \Ind(\overline{\pa}_{\max}).
\end{equation}
According to \cite{FultonLibro} pag. 360 we have $$\chi(V,\mathcal{O}_V)=\chi(M,\mathcal{O}_M)-\sum_{p\in \sing(V)}\delta_p$$ where $\pi:M\rightarrow V$ is a resolution of $V$ and $\delta_p:=l(\pi_*\mathcal{O}_M/\mathcal{O}_V)_p$, that is the length of the stalk at $p$ of the sheaf $\pi_*\mathcal{O}_M/\mathcal{O}_V$. As $\sing(V)\neq \emptyset$ we have that $\chi(V,\mathcal{O}_V)<\chi(M,\mathcal{O}_M)$.
Let us justify this latter inequality; assume by contradiction  that $\chi(V,\mathcal{O}_V)=\chi(M,\mathcal{O}_M)$, that is $l(\pi_*\mathcal{O}_M/\mathcal{O}_V)_p=0$ for any $p\in \sing(V)$. This means that $(\pi_*\mathcal{O}_M)_p=\mathcal{O}_{V,p}$ for any $p\in X$ and therefore  $\pi_*\mathcal{O}_M=\mathcal{O}_{V}$. As $V$ is a curve we can assume that $\pi:M\rightarrow V$ is a normalization of $V$. Indeed if $\nu:N\rightarrow V$ is a normalization of $V$ then it is in particular a resolution of $V$ and if we consider now any other resolution $\pi:M\rightarrow V$ then $\pi^{-1}\circ \nu:N\rightarrow M$ is a biholomorphism. Thus we can conclude that $\tilde{\mathcal{O}}_V=\pi_*\mathcal{O}_M=\mathcal{O}_{V}$, that is $V$ is normal. But a normal curve is non-singular and this is not consistent with the fact that $\sing(V)\neq \emptyset$. Therefore we necessarily have $\chi(V,\mathcal{O}_V)<\chi(M,\mathcal{O}_M)$. On the other hand Th. 4.1 in \cite{BPScurves} tells us that $\Ind(\overline{\pa}_{\min})=\chi(M,\mathcal{O}_{M})$. Hence by using \eqref{inequalities} we conclude that for any closed extension $\overline{D}:L^2(\reg(V),h)\rightarrow L^2\Omega^{0,1}(\reg(V),h)$ of $\overline{\partial}:C^{\infty}_c(\reg(V))\rightarrow \Omega^{0,1}_c(\reg(V))$ we have that $\Ind(\overline{D})\not= \chi(V,\mathcal{O}_V).$
\end{proof}

We shall now use the above Theorem in order to draw conclusions for K-homology classes. First, however, we state
and prove a result about the existence K-homology classes defined by closed extensions $\overline{D}:L^2(\reg(V),h)\rightarrow L^2\Omega^{0,1}(\reg(V),h)$ of $\overline{\partial}:C^{\infty}_c(\reg(V))\rightarrow \Omega^{0,1}_c(\reg(V))$.

\begin{proposition}
In the setting of Theorem \ref{noL2} the following holds:\\
for any  closed extension $\overline{D}:L^2(\reg(V),h)\rightarrow L^2\Omega^{0,1}(\reg(V),h)$ of $\overline{\partial}:C^{\infty}_c(\reg(V))\rightarrow \Omega^{0,1}_c(\reg(V))$, the operator $\overline{D}+\overline{D}^*:L^2\Omega^{0,\bullet}(\reg(V),h)\rightarrow L^2\Omega^{0,\bullet}(\reg(V),h)$ defines a class $[\overline{D}+\overline{D}^*]\in KK_0(C(V),\mathbb{C})$. 
\end{proposition}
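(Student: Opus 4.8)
The plan is to exhibit a triple $(H,\upsilon,D)$ and check that it is an even unbounded Fredholm module for $C(V)$, so that $[\overline{D}+\overline{D}^*]:=[D]$ is the required class. Since $V$ is a curve we have $L^2\Omega^{0,\bullet}(\reg(V),h)=L^2\Omega^{0,0}(\reg(V),h)\oplus L^2\Omega^{0,1}(\reg(V),h)$, and I set $H:=L^2\Omega^{0,\bullet}(\reg(V),h)$ with the $\mathbb{Z}_2$-grading $\tau$ equal to $+\Id$ on $L^2\Omega^{0,0}(\reg(V),h)$ and $-\Id$ on $L^2\Omega^{0,1}(\reg(V),h)$. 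The representation $\upsilon$ is pointwise multiplication, a $*$-representation of $C(V)$ commuting with $\tau$. The operator is $D:=\overline{D}+\overline{D}^*$, which in this splitting is the off-diagonal matrix $\left(\begin{smallmatrix} 0 & \overline{D}^* \\ \overline{D} & 0 \end{smallmatrix}\right)$ with domain $\mathcal{D}(\overline{D})\oplus\mathcal{D}(\overline{D}^*)$. Because $\overline{D}$ is closed this operator is self-adjoint, and it anticommutes with $\tau$ by construction, so the grading axiom holds.

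As dense $*$-subalgebra I take $A:=S_c(V)$, which is available and dense in $C(V)$ since $\dim(\sing(V))=0$ for a curve. For $f\in S_c(V)$ and $\omega\in\mathcal{D}(\overline{D})$, Proposition \ref{stable} gives $f\omega\in\mathcal{D}(\overline{D})$; moreover $\overline{D}^*$ is itself a closed extension of the formal adjoint $\overline{\partial}^t$, since taking Hilbert adjoints in $\overline{\partial}_{\min}\subseteq\overline{D}\subseteq\overline{\partial}_{\max}$ yields $\overline{\partial}^t_{\min}\subseteq\overline{D}^*\subseteq\overline{\partial}^t_{\max}$, so Proposition \ref{stable} applied to $\overline{D}^*$ shows $f\omega\in\mathcal{D}(\overline{D}^*)$ as well. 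Hence $\mathcal{D}(D)$ is $\upsilon(A)$-invariant. The commutator is computed exactly as in the proof of Proposition \ref{minclass}: on smooth elements of the domain the Leibniz rule gives $[D,M_f]\omega=\overline{\partial}f\wedge\omega-(\overline{\partial}f\wedge)^*\omega$, and since $f\in S_c(V)$ is locally constant near $\sing(V)$ we have $\overline{\partial}f\in\Omega^{0,1}_c(\reg(V))\subset L^\infty\Omega^{0,1}(\reg(V),h)$, so $[D,M_f]$ extends to a bounded operator on $H$.

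The main point is that $(1+D^2)^{-1}$ is compact. Since $\mathcal{D}(D)$ with its graph norm splits orthogonally as $\mathcal{D}(\overline{D})\oplus\mathcal{D}(\overline{D}^*)$, the operator $D$ has compact resolvent iff the two inclusions $\mathcal{D}(\overline{D})\hookrightarrow L^2\Omega^{0,0}(\reg(V),h)$ and $\mathcal{D}(\overline{D}^*)\hookrightarrow L^2\Omega^{0,1}(\reg(V),h)$ are compact for the respective graph norms. Here I use the squeeze $\overline{\partial}_{\min}\subseteq\overline{D}\subseteq\overline{\partial}_{\max}$: for $u\in\mathcal{D}(\overline{D})$ one has $\overline{D}u=\overline{\partial}_{\max}u$, so the graph norms of $\overline{D}$ and of $\overline{\partial}_{\max}$ coincide on $\mathcal{D}(\overline{D})$ and the inclusion $\mathcal{D}(\overline{D})\hookrightarrow\mathcal{D}(\overline{\partial}_{\max})$ is isometric. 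Proposition \ref{maxclass} says $\overline{\eth}_{\operatorname{abs}}=\overline{\partial}_{\max}+\overline{\partial}^t_{\min}$ has compact resolvent, which for a curve means precisely that $\mathcal{D}(\overline{\partial}_{\max})\hookrightarrow L^2\Omega^{0,0}(\reg(V),h)$ is compact; composing with the bounded isometry gives compactness of $\mathcal{D}(\overline{D})\hookrightarrow L^2\Omega^{0,0}(\reg(V),h)$. Symmetrically, from $\overline{D}^*\subseteq\overline{\partial}^t_{\max}$ and the compact resolvent of $\overline{\eth}_{\operatorname{rel}}=\overline{\partial}_{\min}+\overline{\partial}^t_{\max}$ (Proposition \ref{minclass}), which gives compactness of $\mathcal{D}(\overline{\partial}^t_{\max})\hookrightarrow L^2\Omega^{0,1}(\reg(V),h)$, I deduce compactness of $\mathcal{D}(\overline{D}^*)\hookrightarrow L^2\Omega^{0,1}(\reg(V),h)$. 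Thus $(1+D^2)^{-1}$ is compact, a fortiori $\upsilon(f)(1+D^2)^{-1}$ is compact for all $f$, and all four axioms are verified.

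I expect the resolvent compactness to be the only genuinely nontrivial step: domain invariance and bounded commutators follow routinely from Proposition \ref{stable} and the Leibniz rule as in Proposition \ref{minclass}, whereas the resolvent of the \emph{a priori} arbitrary closed extension $\overline{D}$ must be controlled using the minimal and maximal cases. The device that makes this painless is the remark that squeezing $\overline{D}$ between $\overline{\partial}_{\min}$ and $\overline{\partial}_{\max}$ realizes $\mathcal{D}(\overline{D})$, with its graph norm, as a subspace of a domain already known (by Propositions \ref{minclass} and \ref{maxclass}) to embed compactly.
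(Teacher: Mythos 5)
Your proof is correct, and its first half --- the graded module $L^2\Omega^{0,0}\oplus L^2\Omega^{0,1}$ with pointwise multiplication, the dense $*$-subalgebra $S_c(V)$, domain invariance via Proposition \ref{stable}, and the Leibniz computation showing $[D,M_f]$ is bounded --- is exactly the paper's argument. Where you genuinely diverge is the compact-resolvent step. The paper gets the two compact embeddings from the literature: for $\mathcal{D}(\overline{D}^*)\hookrightarrow L^2\Omega^{0,1}$ it invokes \cite{FraBei} Th.~5.1, which asserts compactness of the domain embedding for \emph{any} closed extension of $\overline{\partial}_{1,0}$, transferred to closed extensions of $\overline{\partial}^t$ by composing with the Hodge star and conjugation; for $\mathcal{D}(\overline{D})\hookrightarrow L^2$ it invokes \cite{BPScurves} Th.~3.1 and \cite{OvRu} Th.~1.2 to get compactness of $\mathcal{D}(\overline{\partial}_{\max})\hookrightarrow L^2$ and then restricts, exactly as you do. You instead extract both embeddings from Propositions \ref{minclass} and \ref{maxclass}, exploiting that on a curve $\overline{\partial}_{0,1}=0$, so the domains of $\overline{\eth}_{\operatorname{abs}}$ and $\overline{\eth}_{\operatorname{rel}}$ decouple degree by degree and their compact resolvents are equivalent to the compactness of $\mathcal{D}(\overline{\partial}_{\max})\hookrightarrow L^2\Omega^{0,0}$ and $\mathcal{D}(\overline{\partial}^t_{\max})\hookrightarrow L^2\Omega^{0,1}$, respectively. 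Your route buys self-containedness (no citations beyond what the paper has already established) and it makes explicit the adjoint squeeze $\overline{\partial}^t_{\min}\subseteq\overline{D}^*\subseteq\overline{\partial}^t_{\max}$, a point the paper leaves implicit even though it is needed there as well: it is precisely what makes $\overline{D}^*$ a closed extension of $\overline{\partial}^t$, so that \cite{FraBei} Th.~5.1 and Proposition \ref{stable} can be applied to it. What the paper's route buys is an argument whose shape does not depend on the degree-wise decoupling: in the surface case (Proposition \ref{noKL2-prop}) the middle-degree Laplacian mixes $\overline{D}_{0,1}$ and $\overline{D}_{0,0}^*$, your splitting trick is unavailable, and one is forced back to results of the \cite{FraBei}/\cite{GrieserLesch} type. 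Finally, note that both proofs rest on the same convention, used throughout the paper (e.g.\ in the proof of Theorem \ref{noL2}, where $\ker(\overline{D})\subset\ker(\overline{\partial}_{\max})$ is asserted), that a ``closed extension'' is one lying between the minimal and the maximal extension; this containment is not automatic for an abstract closed extension of $\overline{\partial}|_{C^\infty_c}$, so it is a hypothesis you are entitled to, not a gap.
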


\begin{proof}
Let $\overline{D}:L^2(\reg(V),h)\rightarrow L^2\Omega^{0,1}(\reg(V),h)$ be an arbitrary closed extension of $\overline{\partial}:C^{\infty}_c(\reg(V))\rightarrow \Omega^{0,1}_c(\reg(V))$. First we want to show that $\overline{D}+\overline{D}^*$ induces a class in  $KK_0(C(V),\mathbb{C})$. Let $H=L^2(\reg(V),h)\oplus L^2\Omega^{0,1}(\reg(V),h)$ and let $S_c(V)$ be the dense $*$-subalgebra of $C(V)$. As usual $C(V)$ acts on $H$ by pointwise multiplication. Thanks to Prop. \ref{stable} we know that $\mathcal{D}(\overline{D}+\overline{D}^*)$ is preserved by the action of $S_c(V)$. Furthermore, given $f\in S_c(V)$, we have $[\overline{D}+\overline{D}^*,f]\omega=\overline{\partial}f\wedge \omega-(\overline{\partial}f\wedge)^*\omega$ where $(\overline{\partial}f\wedge)^*$ is the adjoint of the map $\eta\mapsto \overline{\partial}f\wedge\eta$. As $\overline{\pa}f\in \Omega_c^{0,1}(\reg(V))$ we can conclude that $[\overline{D}+\overline{D}^*,f]$ induces a bounded operator on $H$. Finally we are left to show that $\overline{D}+\overline{D}^*$ has compact resolvent. Thanks to \cite{FraBei} Th. 5.1 we know that given an arbitrary closed extension $\overline{D}_{1,0}:L^2\Omega^{1,0}(\reg(V),h)\rightarrow L^2\Omega^{1,1}(\reg(V),h)$ of $\overline{\partial}_{1,0}:\Omega^{1,0}_c(\reg(V))\rightarrow \Omega^{1,1}_c(\reg(V))$, the inclusion $\mathcal{D}(\overline{D}_{1,0})\hookrightarrow L^2\Omega^{1,0}(\reg(V),h)$ is a compact operator, where  $\mathcal{D}(\overline{D}_{1,0})$ is endowed with the corresponding graph norm. Composing with $*$ and $c$ this tells us that for any arbitrary closed extension $\overline{D}^*:L^2\Omega^{0,1}(\reg(V),h)\rightarrow L^2(\reg(V),h)$ of $\overline{\partial}^t:\Omega^{0,1}_c(\reg(V))\rightarrow C^{\infty}_c(\reg(V))$, the inclusion $\mathcal{D}(\overline{D}_{0,1}^*)\hookrightarrow L^2\Omega^{1,0}(\reg(V),h)$ is a compact operator. Consider now $\overline{\partial}_{\max}:L^2(\reg(V),h)\rightarrow L^2\Omega^{0,1}(\reg(V),h)$. By \cite{BPScurves} Th. 3.1 and \cite{OvRu}
Th. 1.2 we know that the inclusion $\mathcal{D}(\overline{\partial}_{\max})\hookrightarrow L^2(\reg(V),h)$ is a compact operator. This implies immediately that given an arbitrary closed extension $\overline{D}:L^2(\reg(V),h)\rightarrow L^2\Omega^{0,1}(\reg(V),h)$ of $\overline{\partial}:C^{\infty}_c(\reg(V))\rightarrow \Omega^{0,1}_c(\reg(V))$, the inclusion $\mathcal{D}(\overline{D})\hookrightarrow L^2(\reg(V),h)$ is compact as well. In this way we are in position to conclude that $\overline{D}+\overline{D}^*$ has compact resolvent. Thus $\overline{D}+\overline{D}^*$ induces a class $[\overline{D}+\overline{D}^*]\in KK_0(C(V),\mathbb{C})$.
\end{proof}

\begin{corollary}
\label{noL2Kcurves}
In the setting of Th. \ref{noL2} there is no  closed extension $\overline{D}:L^2(\reg(V),g)\rightarrow L^2\Omega^{0,1}(M,h)$ of $\overline{\partial}:C^{\infty}_c(\reg(V))\rightarrow \Omega^{0,1}_c(\reg(V))$ such that the class $[\overline{D}+\overline{D}^*]$ coincides with the Baum-Fulton-MacPherson class ${\rm Td}^{{\rm BFM}}_K (V)\in K^{\operatorname{top}}_0 (V)$ through the isomorphism  $K^{\operatorname{top}}_0(X)\cong KK_0(C(X),\mathbb{C})$.
\end{corollary}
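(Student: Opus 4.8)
The plan is to reduce the statement to Theorem \ref{noL2} by pushing forward to a point and invoking the functoriality of analytic K-homology. Suppose, for contradiction, that there were a closed extension $\overline{D}$ of $\overline{\partial}\colon C^{\infty}_c(\reg(V))\to \Omega^{0,1}_c(\reg(V))$ with $[\overline{D}+\overline{D}^*]={\rm Td}_K^{{\rm BFM}}(V)$ in $K^{\operatorname{top}}_0(V)\cong KK_0(C(V),\mathbb{C})$. By the preceding proposition the left-hand side is a genuine K-homology class, so the equality is meaningful. Let $p\colon V\to \{{\rm pt}\}$ be the collapse map; it induces a homomorphism $p_*\colon KK_0(C(V),\mathbb{C})\to KK_0(\mathbb{C},\mathbb{C})\cong \mathbb{Z}$, and applying $p_*$ to the assumed equality yields $p_*[\overline{D}+\overline{D}^*]=p_*({\rm Td}_K^{{\rm BFM}}(V))$.

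First I would evaluate the analytic side. Pushing an even unbounded Fredholm module forward along $p$ amounts to forgetting the $C(V)$-representation and taking the index of the underlying operator. With respect to the $\mathbb{Z}_2$-grading of $L^2\Omega^{0,\bullet}(\reg(V),h)=L^2(\reg(V),h)\oplus L^2\Omega^{0,1}(\reg(V),h)$ the self-adjoint operator $\overline{D}+\overline{D}^*$ is odd, its even-to-odd part is exactly $\overline{D}$, and its graded kernel splits as $\ker(\overline{D})\oplus\ker(\overline{D}^*)=\ker(\overline{D})\oplus\coker(\overline{D})$. Since $\overline{D}+\overline{D}^*$ has compact resolvent by the preceding proposition, it is Fredholm (so $\im(\overline{D})$ is closed and the identification $\ker(\overline{D}^*)\cong\coker(\overline{D})$ is legitimate), whence $p_*[\overline{D}+\overline{D}^*]=\Ind(\overline{D})$. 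Next I would evaluate the topological side: by the naturality of the Baum--Fulton--MacPherson transformation $\alpha$ under proper pushforward, $p_*({\rm Td}_K^{{\rm BFM}}(V))=p_*\alpha_V([\mathcal{O}_V])=\alpha_{\rm pt}(p_*[\mathcal{O}_V])=\chi(V,\mathcal{O}_V)$, exactly as recorded in the Introduction. Comparing the two evaluations, the assumed equality would force $\Ind(\overline{D})=\chi(V,\mathcal{O}_V)$, which is impossible by Theorem \ref{noL2}. This contradiction proves the corollary.

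The genuinely routine parts are the index computation $p_*[\overline{D}+\overline{D}^*]=\Ind(\overline{D})$ and the final comparison with Theorem \ref{noL2}. The step I expect to require the most care is the bookkeeping identifying the two pushforwards: that the topological $p_*$ on $K^{\operatorname{top}}_0$ corresponds, under the isomorphism $K^{\operatorname{top}}_0(V)\cong KK_0(C(V),\mathbb{C})$, to the analytic pushforward computing the Fredholm index, and that this same $p_*$ carries the Baum--Fulton--MacPherson class to the holomorphic Euler characteristic. Both are standard---the former is the index-theoretic description of $p_*$, the latter is the defining compatibility of $\alpha$ with direct image, see \cite{BFMI} and \cite{BFMII}---but they are the only nontrivial inputs beyond Theorem \ref{noL2} itself, and making them precise is where I would spend the effort.
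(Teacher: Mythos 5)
Your proposal is correct and follows essentially the same route as the paper's proof: push forward along the collapse map $p\colon V\to \{\mathrm{pt}\}$, identify $p_*[\overline{D}+\overline{D}^*]$ with $\Ind(\overline{D})$ and $p_*({\rm Td}^{{\rm BFM}}_K(V))$ with $\chi(V,\mathcal{O}_V)$, and conclude by Theorem \ref{noL2}. The only difference is that you spell out the Fredholm/grading bookkeeping and the naturality of the Baum--Fulton--MacPherson transformation, which the paper simply cites as well known.
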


\begin{proof}
 Let  $p:V\rightarrow q$ be the map sending $X$ to a point. With a little abuse of notation let us label with $p_*$ both maps $p_*:K^{\text{top}}_0(V)\rightarrow \mathbb{C}$, $p_*:KK_0(C(V),\mathbb{C})\rightarrow \mathbb{C}$ induced by $p:V\rightarrow q$. Then $p_*$ commutes with the identification $K^{\text{top}}_0(V)\cong KK_0(C(V),\mathbb{C})$ and it is well known that $p_* ({\rm Td}^{{\rm BFM}}_K (V))\equiv p_*(\alpha_V[\mathcal{O}_V])=\chi(V,\mathcal{O}_V)$ and $p_*([\overline{D}+\overline{D}^*])=\Ind(\overline{D}+\overline{D}^*)$. Now the Corollary follows immediately by Theorem \ref{noL2} as $\chi(\mathcal{O}_V)<\Ind(\overline{D}+\overline{D}^*)$.
\end{proof}

The next result is concerned with a similar question in the setting of normal complex projective surfaces. In order to state it  we need to introduce some notations. Let $(N,h)$ be a possibly incomplete Hermitian manifold of complex dimension $n$. A closed extension $(L^2\Omega^{p,q}(N,h),\overline{D}_{p,q})$ of $(\Omega_c^{p,q}(N),\overline{\partial}_{0,q})$ is given by a choice of a closed extension $\overline{D}_{p,q}:L^2\Omega^{p,q}(N,h)\rightarrow L^2\Omega^{p,q+1}(N,h)$ of $\overline{\pa}_{p,q}:\Omega_c^{p,q}(N)\rightarrow \Omega_c^{p,q+1}(N)$ for each $q=0,...,n$ such that $\Im(\overline{D}_{p,q})\subset \mathcal{D}(\overline{D}_{p,q+1})$ for any $q=0,...,n$. If the cohomology of the complex $(L^2\Omega^{p,q}(N,h),\overline{D}_{p,q})$ is finite dimensional then we will define $\chi_{2,\overline{D}_{p,q}}(N,h)$ as $\chi_{2,\overline{D}_{p,q}}(N):=\sum_{q}(-1)^q\dim(\ker(\overline{D}_{p,q})/\Im(\overline{D}_{p,q-1}))$. In particular we have $\chi_{2,\overline{\pa}_{p,q,\max/	\min}}(N,h):=\sum_{q}(-1)^q\dim(\ker(\overline{\pa}_{p,q,\max/\min})/\Im(\overline{\pa}_{p,q-1,\max/\min}))$.

\begin{theorem}
\label{noL2surf}
Let $V\subset \mathbb{C}\mathbb{P}^n$ a complex projective surface with $\dim(\sing(V))=0$. Let $\pi:M\rightarrow V$ be a resolution of $V$. Let $h$ be the Hermitian metric on $\reg(V)$ induced by the Fubini-Study metric of  $\mathbb{C}\mathbb{P}^n$. Then any closed extension $(L^2\Omega^{0,q}(\reg(V),h),\overline{D}_{0,q})$ of $(\Omega_c^{0,q}(\reg(V)),\overline{\partial}_{0,q})$ has finite dimensional cohomology and its corresponding Euler characteristic $\chi_{2,\overline{D}_{0,q}}(\reg(V),h)$ satisfies 
$$\chi_{2,\overline{\pa}_{0,q,\max}}(\reg(V),h)\leq \chi_{2,\overline{D}_{0,q}}(\reg(V),h)\leq \chi_{2,\overline{\pa}_{0,q,\min}}(\reg(V),h).$$ If $V$ is normal then we have 
\begin{equation}
\label{inequalitiessurfaces}
\chi_{2,\overline{\pa}_{0,q,\max}}(\reg(V),h)\leq \chi_{2,\overline{D}_{0,q}}(\reg(V),h)\leq \chi_{2, \overline{\pa}_{0,q,\min}}(\reg(V),h)\leq \chi(V,\mathcal{O}_V).
\end{equation}
 Finally if $V$ is normal and $R^1 \pi_* \mathcal{O}_M$ does not vanish then we have 
\begin{equation}
\label{inequalitiessurfaces2}
\chi_{2,\overline{\pa}_{0,q,\max}}(\reg(V),h)\leq \chi_{2,\overline{D}_{0,q}}(\reg(V),h)\leq \chi_{2,\overline{\pa}_{0,q,\min}}(\reg(V),h)<\chi(V,\mathcal{O}_V).
\end{equation}
Therefore, given a complex projective surface which is normal and with non-trivial $R^1 \pi_* \mathcal{O}_M$
and given a closed extension $(L^2\Omega^{0,q}(\reg(V),h),\overline{D}_{0,q})$ of $(\Omega_c^{0,q}(\reg(V)),\overline{\partial}_{0,q})$  we have that $$\chi_{2,\overline{D}_{0,q}}(\reg(V),h)\not= \chi(V,\mathcal{O}_V).$$
\end{theorem}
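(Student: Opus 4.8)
The plan is to reduce the whole statement to a single structural observation about which closed extensions are actually available, and then to track how the Euler characteristic $\chi_{2,\overline{D}_{0,q}}(\reg(V),h)$ moves as the extension varies. The guiding point is that, for a surface with only isolated singularities, the freedom in choosing a Hilbert complex $(L^2\Omega^{0,q}(\reg(V),h),\overline{D}_{0,q})$ is minimal: $\overline{D}_{0,2}$ is forced (its target is $0$), and I expect the operator in bidegree $(0,0)$ to be \emph{unique}. Indeed the points of $\sing(V)$ have real codimension $4$ in $\reg(V)$, hence zero $W^{1,2}$-capacity, so a logarithmic cutoff argument (approximate $f\in\mathcal{D}(\overline{\partial}_{0,0,\max})$ by $\chi_\epsilon f$ compactly supported in $\reg(V)$, using $f\in L^2$ and $\int|\overline{\partial}\chi_\epsilon|^2|f|^2\to0$) should show $\mathcal{D}(\overline{\partial}_{0,0,\max})=\mathcal{D}(\overline{\partial}_{0,0,\min})$; since capacity and the domains only depend on the quasi-isometry class of $h$, this transfers from the ambient Euclidean geometry to the Fubini--Study metric. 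Thus $\overline{D}_{0,0}=\overline{\partial}_{0,0,\min}=\overline{\partial}_{0,0,\max}$ for every admissible choice, and the only genuinely free operator is $\overline{D}_{0,1}$, constrained by $\overline{\partial}_{0,1,\min}\subseteq\overline{D}_{0,1}\subseteq\overline{\partial}_{0,1,\max}$. (This is precisely where surfaces differ from curves: in real dimension $2$ a point has positive $2$-capacity, the two extensions really differ, and one must compare indices, \cf Th.~\ref{noL2}.)

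Granting this, finiteness and the first chain of inequalities follow by sandwiching together with pure monotonicity. Finiteness: $H^0=\ker\overline{D}_{0,0}=\ker\overline{\partial}_{0,0,\max}$ is the space of global $L^2$ holomorphic functions, hence finite-dimensional; the inclusion $\ker\overline{D}_{0,1}\subseteq\ker\overline{\partial}_{0,1,\max}$ gives an injection $H^1_{\overline{D}}\hookrightarrow H^1_{\max}$ (the denominator $\Im\overline{D}_{0,0}=\Im\overline{\partial}_{0,0,\max}$ being common, by level-$0$ uniqueness), and $\Im\overline{\partial}_{0,1,\min}\subseteq\Im\overline{D}_{0,1}$ gives a surjection $H^2_{\min}\twoheadrightarrow H^2_{\overline{D}}$; since the minimal and maximal $L^2$-$\overline{\partial}$-cohomologies of $(\reg(V),h)$ are finite-dimensional by \cite{Bei2017}, \cite{FraBei}, \cite{OvRu}, \cite{JRupp}, all three groups are finite-dimensional (and the ranges closed). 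Monotonicity: applying the same two comparison maps to $\overline{D}_{0,1}\subseteq\overline{D}_{0,1}'$ shows that enlarging the level-$1$ domain can only raise $\dim H^1$ and lower $\dim H^2$, leaving $\dim H^0$ fixed; since $H^1,H^2$ enter $\chi=\dim H^0-\dim H^1+\dim H^2$ with signs $-$ and $+$, enlarging $\overline{D}_{0,1}$ can only decrease $\chi$. Taking the extremes $\overline{\partial}_{0,1,\max}$ and $\overline{\partial}_{0,1,\min}$ yields $\chi_{2,\overline{\partial}_{0,q,\max}}\le\chi_{2,\overline{D}_{0,q}}\le\chi_{2,\overline{\partial}_{0,q,\min}}$, with no normality hypothesis needed.

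For the refinement under normality I would pin down the top of the chain. In analogy with the curve case, where \cite{BPScurves} gives $\Ind(\overline{\partial}_{\min})=\chi(M,\mathcal{O}_M)$, the minimal $L^2$-$\overline{\partial}$-cohomology should compute $H^q(M,\mathcal{O}_M)$, so that $\chi_{2,\overline{\partial}_{0,q,\min}}(\reg(V),h)=\chi(M,\mathcal{O}_M)$ (via \cite{OvRu}, \cite{JRupp}). If $V$ is normal then $\pi_*\mathcal{O}_M=\mathcal{O}_V$, and since $\dim(\sing(V))=0$ with one-dimensional fibres we have $R^q\pi_*\mathcal{O}_M=0$ for $q\ge2$ while $R^1\pi_*\mathcal{O}_M$ is a skyscraper sheaf; the Leray spectral sequence then gives $\chi(M,\mathcal{O}_M)=\chi(V,\mathcal{O}_V)-\dim R^1\pi_*\mathcal{O}_M$. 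Hence $\chi_{2,\overline{\partial}_{0,q,\min}}(\reg(V),h)=\chi(V,\mathcal{O}_V)-\dim R^1\pi_*\mathcal{O}_M\le\chi(V,\mathcal{O}_V)$, with equality if and only if $R^1\pi_*\mathcal{O}_M=0$. Combined with the previous sandwich this proves \eqref{inequalitiessurfaces}, and strictness when $R^1\pi_*\mathcal{O}_M\neq0$ gives \eqref{inequalitiessurfaces2}; in particular $\chi_{2,\overline{D}_{0,q}}(\reg(V),h)\le\chi_{2,\overline{\partial}_{0,q,\min}}(\reg(V),h)<\chi(V,\mathcal{O}_V)$, which is the final assertion.

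The main obstacle is the structural input $\overline{\partial}_{0,0,\min}=\overline{\partial}_{0,0,\max}$: the cutoff estimate must be run against the genuinely degenerate Fubini--Study geometry near $\sing(V)$, not a smooth background metric, and it is exactly this codimension-$\ge2$ removability that forces the free cohomological level to be the odd one, thereby reversing the direction of the inequalities relative to the curve case. A secondary point needing care is the identification $\chi_{2,\overline{\partial}_{0,q,\min}}(\reg(V),h)=\chi(M,\mathcal{O}_M)$, which I would extract as the surface analogue of the index computation of \cite{BPScurves} from the finiteness and representation theorems of \cite{OvRu} and \cite{JRupp}; once these two ingredients are in place, the monotonicity and the Leray computation are routine.
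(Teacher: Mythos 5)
Your proof has the same architecture as the paper's: (i) rigidity in bidegree $(0,0)$, i.e. $\overline{\partial}_{0,0,\max}=\overline{\partial}_{0,0,\min}$, so that every admissible complex shares the same $H^{0}$ and the same denominator at level one; (ii) the sandwich $\overline{\partial}_{0,1,\min}\subseteq\overline{D}_{0,1}\subseteq\overline{\partial}_{0,1,\max}$ together with monotonicity of $\dim H^{1}$ and $\dim H^{2}$, hence of $\chi$, in the level-one domain; (iii) the identification $\chi_{2,\overline{\partial}_{0,q,\min}}(\reg(V),h)=\chi(M,\mathcal{O}_M)$ taken from the literature; (iv) the formula $\chi(M,\mathcal{O}_M)=\chi(V,\mathcal{O}_V)-\dim_{\mathbb{C}}H^0(V,R^1\pi_*\mathcal{O}_M)$ for normal $V$. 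Steps (ii)--(iv) are exactly the paper's steps; your Leray computation in (iv) is in effect a proof of the formula the paper quotes from \cite{FultonLibro}, p.~361, and for (iii) the paper cites \cite{Pardon}, \cite{PardonSternJAMS} and \cite{JRupp} (Pardon--Stern is the reference for the minimal complex; \cite{OvRu} and \cite{JRupp} mainly concern the maximal one, so your attribution needs adjusting).

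The genuine gap is in your treatment of (i), which is the load-bearing analytic input. Your cutoff argument requires $\int|\overline{\partial}\chi_\epsilon|^2|f|^2\,\dvol_h\rightarrow 0$ for \emph{every} $f\in\mathcal{D}(\overline{\partial}_{0,0,\max})$, but such $f$ is merely $L^2$ and need not be bounded: zero capacity of the singular points only gives $\int|d\chi_\epsilon|^2\,\dvol_h\rightarrow 0$, which handles bounded $f$. For the de Rham operator on real-valued functions one reduces to bounded functions by truncation, but truncation is incompatible with $\overline{\partial}$ (the truncation of an unbounded holomorphic function is no longer holomorphic, and its $\overline{\partial}$ is not controlled), so there is no elementary reduction. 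The actual proofs in the literature supplement the cutoff by local $L^2$-solvability of $\overline{\partial}$ near the singular point together with an extension argument for the resulting holomorphic part; this is precisely the content of the results the paper invokes at this step, namely \cite{GrieserLesch} Th.~1.2 or \cite{PSCrelle}. Your parenthetical explanation of the curve/surface dichotomy is also wrong: a point has zero $W^{1,2}$-capacity in real dimension $2$ as well. What goes wrong for curves (cf.\ Th.~\ref{noL2}) is not positive capacity but the degeneracy of the induced metric: for a cusp, the metric pulled back to the normalization is comparable to $|t|^2|dt|^2$, so unbounded holomorphic functions such as $1/t$ become square-integrable, lie in the maximal domain, and the cutoff estimate $\int|1/t|^2|d\chi_\epsilon|^2\,\dvol\rightarrow 0$ fails for them. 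If you replace your capacity sketch by a citation of the rigidity results above, the rest of your argument goes through and coincides with the paper's proof.
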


\begin{proof}
According to \cite{PardonSternJAMS} we know that $H^{0,q}_{2,\overline{\pa}_{\min}}(\reg(V),h)$ is finite dimensional for each $q=0,...,2$. By the fact that $\dim(\sing(X))=0$ we are in position to use  \cite{JRupp} Th. 1.9 to conclude  that $H^{0,q}_{2,\overline{\pa}_{\max}}(\reg(V),h)$ is finite dimensional for each $q=0,...,2$. Moreover, again using the assumption that $\dim(\sing(X))=0$, we can  conclude that  $\overline{\partial}_{\max}:L^2(\reg(V),h)\rightarrow L^2\Omega^{0,1}(\reg(V),h)$ and $\overline{\partial}_{\min}:L^2(\reg(V),h)\rightarrow L^2\Omega^{0,1}(\reg(V),h)$ coincides, see  \cite{GrieserLesch} Th. 1.2 or \cite{PSCrelle}. Consider now any closed extension $(L^2\Omega^{0,q}(\reg(V),h),\overline{D}_{0,q})$ of $(\Omega_c^{0,q}(\reg(V)),\overline{\partial}_{0,q})$. Clearly $\ker(\overline{D}_{0,0})=\mathbb{C}$ and $\Im(\overline{D}_{0,0})=\Im(\overline{\pa}_{\max})=\Im(\overline{\pa}_{\min})$. Therefore $\ker(\overline{D}_{0,1})/\Im(\overline{D}_{0,0})$ is finite dimensional because $\ker(\overline{D}_{0,1})\subset \ker(\overline{\pa}_{0,1,\max})$, $\ker(\overline{\pa}_{0,1,\max})/\Im(\overline{\pa}_{\max})$ is finite dimensional and, as previously remarked, we have $\Im(\overline{D}_{0,0})=\Im(\overline{\pa}_{\max})=\Im(\overline{\pa}_{\min})$. Concerning $L^2\Omega^{0,2}(\reg(V),h)/(\Im(\overline{D}_{0,1}))$ we can conclude similarly that it is finite dimensional because $L^2\Omega^{0,2}(\reg(V),h)/(\Im(\overline{\pa}_{0,1,\min}))$ is finite dimensional and $\Im(\overline{\pa}_{0,1,\min})\subset \Im(\overline{D}_{0,1})$. Hence we can conclude that $(L^2\Omega^{0,q}(\reg(V),h),\overline{D}_{0,q})$ has finite dimensional cohomology and its Euler characteristic satisfies $$\chi_{2,\overline{D}_{0,q}}(\reg(V),h)= 1+\dim(L^2\Omega^{0,2}(\reg(V),h)/(\Im(\overline{D}_{0,1})))-\dim(\ker(\overline{D}_{0,1})/\Im(\overline{\partial}_{\min})).$$ It therefore clear that $$\chi_{2,\overline{\pa}_{0,q,\max}}(\reg(V),h)\leq \chi_{2,\overline{D}}(\reg(V),h)\leq \chi_{2,\overline{\pa}_{0,q,\min}}(\reg(V),h)$$  because we have $\dim(L^2\Omega^{0,2}(\reg(V),h)/(\Im(\overline{\pa}_{0,1,\min})))\geq\dim(L^2\Omega^{0,2}(\reg(V),h)/(\Im(\overline{D}_{0,1})))\geq$ $\dim(L^2\Omega^{0,2}(\reg(V),h)/(\Im(\overline{\pa}_{0,1,\max})))$ and $\dim(\ker(\overline{\pa}_{0,1,\min})/\Im(\overline{\partial}_{\min}))\leq \dim(\ker(\overline{D}_{0,1})/\Im(\overline{\partial}_{\min}))$ $\leq \dim(\ker(\overline{\pa}_{0,1,\max})/\Im(\overline{\partial}_{\min}))$.
Concerning \eqref{inequalitiessurfaces} and \eqref{inequalitiessurfaces2} we argue as follows. Thanks to \cite{FultonLibro} pag. 361 we know that,  given any normal surface $V\subset \mathbb{C}\mathbb{P}^n$ its structure sheaf satisfies $\chi(V,\mathcal{O}_V)=\chi(M,\mathcal{O}_M)+\sum_{p}n_p$ where the sum is taken over the points $p\in \sing(V)$ and $n_p:= l(R^1 \pi_* \mathcal{O}_M)_p$, that is the length of the stalk of the sheaf $R^1 \pi_* \mathcal{O}_M$ at $p$. Clearly $\sum_pn_p\geq 0$ and if $R^1\pi_*\mathcal{O}_M$ is non-trivial then $\sum_pn_p>0$. Now \eqref{inequalitiessurfaces} and \eqref{inequalitiessurfaces2} follow immediately because thanks to \cite{Pardon}, \cite{PardonSternJAMS} and \cite{JRupp} we know that $\chi_{2,\overline{\pa}_{0,q,\min}}(\reg(V),h)=\chi(M,\mathcal{O}_M)$.
\end{proof}

We point out that the inequality $\chi_{2,\overline{\pa}_{0,q,\max}}(\reg(V),h)\leq  \chi_{2,\overline{\pa}_{0,q,\min}}(\reg(V),h)$ has been already proved in \cite{Pardon}.

We now proceed to use this result in order to compare K-homology classes, as it was done for algebraic curves.
First we establish the existence of analytic K-homology classes corresponding to closed extensions 
$(L^2\Omega^{0,q}(N,h),\overline{D}_{0,q})$ of $(\Omega_c^{0,q}(N),\overline{\partial}_{0,q})$.

\begin{proposition}
\label{noKL2-prop}
Let $V\subset \mathbb{C}\mathbb{P}^n$ be a complex projective surface with $\dim(\sing(V))=0$. Let $h$ be the metric on $\reg(V)$ induced by the Fubini-Study metric of $\mathbb{C}\mathbb{P}^n$. Given an arbitrary closed extension  $(L^2\Omega^{0,q}(N,h),\overline{D}_{0,q})$ of $(\Omega_c^{0,q}(N),\overline{\partial}_{0,q})$ 
the corresponding  rolled-up operator $\overline{D}_0+\overline{D}_0^*:L^2\Omega^{0,\bullet}(\reg(V),h)\rightarrow L^2\Omega^{0,\bullet}(\reg(V),h)$ defines a class $[\overline{D}_0+\overline{D}^*_0]\in KK_0(C(V),\mathbb{C})$.
\end{proposition}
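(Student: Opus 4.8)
The plan is to exhibit $(H,\upsilon,\overline{D}_0+\overline{D}_0^*)$ as an even unbounded Fredholm module for $C(V)$ and then obtain the class by the Baaj--Julg bounded transform recalled above (\cite{Baaj-Julg}). Here $H=L^2\Omega^{0,\bullet}(\reg(V),h)$, graded by the parity of $q$, $\upsilon$ is the representation of $C(V)$ by pointwise multiplication, and the operator is self-adjoint since $(L^2\Omega^{0,q}(\reg(V),h),\overline{D}_{0,q})$ is a Hilbert complex. As in Prop.~\ref{minclass} and in the curve case, I would take $S_c(V)$ as the dense $*$-subalgebra, which is available precisely because $\dim(\sing(V))=0$ (see \eqref{subalgebra}). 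Domain invariance is then immediate from Prop.~\ref{stable}: for $f\in S_c(V)$ both $\mathcal{D}(\overline{D}_{0,q})$ and $\mathcal{D}(\overline{D}_{0,q}^*)$ are stable under multiplication by $f$, hence so is $\mathcal{D}(\overline{D}_0+\overline{D}_0^*)=\bigoplus_q\big(\mathcal{D}(\overline{D}_{0,q})\cap\mathcal{D}(\overline{D}_{0,q-1}^*)\big)$. Since $f$ is locally constant near $\sing(V)$ one has $\overline{\partial}f\in\Omega^{0,1}_c(\reg(V))$, so $[\overline{D}_0+\overline{D}_0^*,f]\omega=\overline{\partial}f\wedge\omega-(\overline{\partial}f\wedge)^*\omega$ is bounded Clifford multiplication, settling the commutator condition exactly as before.

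The crux is the compactness of $(1+(\overline{D}_0+\overline{D}_0^*)^2)^{-1}$. By Theorem~\ref{noL2surf} the complex $(L^2\Omega^{0,q}(\reg(V),h),\overline{D}_{0,q})$ has finite-dimensional cohomology, so $\overline{D}_0+\overline{D}_0^*$ is Fredholm on its graph-norm domain (\cite{BruLe}); compact resolvent is then equivalent to the compactness of the inclusion $\mathcal{D}(\overline{D}_0+\overline{D}_0^*)\hookrightarrow H$ in the graph norm, which I would prove degreewise. Because $\overline{D}_{0,1}\circ\overline{D}_{0,0}=0$ forces $\im(\overline{D}_{0,0})\perp\im(\overline{D}_{0,1}^*)$, the graph norm splits and controls, for $\omega=\omega^0+\omega^1+\omega^2$, the quantities $\|\overline{D}_{0,0}\omega^0\|$, $\|\overline{D}_{0,1}\omega^1\|+\|\overline{D}_{0,0}^*\omega^1\|$ and $\|\overline{D}_{0,1}^*\omega^2\|$ separately. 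Hence a graph-bounded sequence gives, in each bidegree, a sequence bounded in the graph norm of the corresponding \emph{maximal} operator: using $\overline{\partial}_{0,q,\min}\subseteq\overline{D}_{0,q}\subseteq\overline{\partial}_{0,q,\max}$ one finds $\omega^0$ bounded in $\mathcal{D}(\overline{\partial}_{0,0,\max})$, $\omega^1$ bounded in $\mathcal{D}(\overline{\partial}_{0,1,\max})\cap\mathcal{D}(\overline{\partial}_{0,0,\max}^t)$, and $\omega^2$ bounded in $\mathcal{D}(\overline{\partial}_{0,1,\max}^t)$ (using that on functions $\overline{\partial}_{0,0,\max}=\overline{\partial}_{0,0,\min}$, as in the proof of Theorem~\ref{noL2surf}, so that $\overline{\partial}_{0,0,\min}^t=\overline{\partial}_{0,0,\max}^t$).

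Finally I would recognize these three spaces as the degree components of the domains of $\overline{\eth}_{\operatorname{abs}}=\overline{\partial}_{0,\max}+\overline{\partial}_{0,\min}^t$ (degrees $0$ and $1$) and of $\overline{\eth}_{\operatorname{rel}}=\overline{\partial}_{0,\min}+\overline{\partial}_{0,\max}^t$ (degree $2$, where $\overline{\partial}_{0,2}=0$). Since $\dim(\sing(V))=0$, Prop.~\ref{maxclass} and Prop.~\ref{minclass} (resting on \cite{OvRu} Th.~1.2 and \cite{FraBei} Cor.~5.2) ensure that $\overline{\eth}_{\operatorname{abs}}$ and $\overline{\eth}_{\operatorname{rel}}$ have compact resolvent, i.e.\ that each of these degree-component domains embeds compactly into the relevant $L^2\Omega^{0,q}(\reg(V),h)$. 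Passing to a common subsequence yields $L^2$-convergence of $\omega_n$, so $\mathcal{D}(\overline{D}_0+\overline{D}_0^*)\hookrightarrow H$ is compact; the three axioms being verified, the Baaj--Julg proposition gives $[\overline{D}_0+\overline{D}_0^*]\in KK_0(C(V),\mathbb{C})$. The main obstacle is the middle-degree compactness: unlike the end degrees, the $(0,1)$-component genuinely mixes $\overline{D}_{0,1}$ and $\overline{D}_{0,0}^*$, and its compact embedding is not a consequence of either operator alone but of the full $\Delta_{\overline{\partial},0,1}$-type estimate near the isolated singularities coming from \cite{OvRu}; the sandwiching between minimal and maximal extensions is exactly what transfers this from the fixed geometric extensions to the arbitrary $\overline{D}_{0,q}$.
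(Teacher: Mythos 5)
Your proof is correct, and its first half (the Hilbert space, the algebra $S_c(V)$, domain invariance via Prop.~\ref{stable}, and the boundedness of the commutator $[\overline{D}_0+\overline{D}_0^*,f]=\overline{\partial}f\wedge\cdot-(\overline{\partial}f\wedge)^*$) coincides with the paper's proof; where you genuinely diverge is the compact-resolvent step. The paper stays at the level of the second-order operator: it decomposes $(\overline{D}_0+\overline{D}_0^*)^2$ into the three block Laplacians $\overline{D}_{0,0}^*\circ\overline{D}_{0,0}$, $\overline{D}_{0,1}^*\circ\overline{D}_{0,1}+\overline{D}_{0,0}\circ\overline{D}_{0,0}^*$ and $\overline{D}_{0,1}\circ\overline{D}_{0,1}^*$, invokes \cite{GrieserLesch} Th.~1.2 for the first and \cite{FraBei} Th.~5.1 for the third, and handles the mixed middle block by combining the finite dimensionality of $\ker(\overline{D}_{0,1})/\Im(\overline{D}_{0,0})$ (from Theorem~\ref{noL2surf}) with the abstract result \cite{FraBei} Cor.~2.1. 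You instead work with the first-order pieces: the sandwiching $\overline{\partial}_{0,q,\min}\subseteq\overline{D}_{0,q}\subseteq\overline{\partial}_{0,q,\max}$ (which is indeed the paper's standing convention for ``closed extension,'' used also in Theorems \ref{noL2} and \ref{noL2surf}) and its adjoint consequence $\overline{D}_{0,q}^*\subseteq\overline{\partial}^t_{0,q,\max}$, together with the identity $\overline{\partial}_{0,0,\max}=\overline{\partial}_{0,0,\min}$ on functions, let you embed each graded piece of $\mathcal{D}(\overline{D}_0+\overline{D}_0^*)$, continuously for the natural norms, into a graded piece of $\mathcal{D}(\overline{\eth}_{\operatorname{abs}})$ (degrees $0$ and $1$) or of $\mathcal{D}(\overline{\eth}_{\operatorname{rel}})$ (degree $2$), and compactness then follows from Props.~\ref{maxclass} and \ref{minclass}. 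This is legitimate, with two small verifications worth writing out: the cross term $\langle\overline{D}_{0,0}\omega^0,\overline{D}_{0,1}^*\omega^2\rangle$ vanishes because $\overline{D}_{0,1}\circ\overline{D}_{0,0}=0$ (automatic, as both operators act distributionally), so the graph norm of the rolled-up operator really does split degreewise; and, by the same orthogonality applied to $\overline{\eth}_{\operatorname{abs}}$ and $\overline{\eth}_{\operatorname{rel}}$, their compact resolvents do pass to compact inclusions of each graded component of their domains. What your route buys is economy: it recycles results already proved in the paper, and it needs neither \cite{FraBei} Cor.~2.1 nor the finite dimensionality of the cohomology of the arbitrary extension --- in fact your appeal to Theorem~\ref{noL2surf} and Fredholmness is superfluous, since for a self-adjoint operator compact resolvent is always equivalent to compactness of the graph-norm inclusion of the domain. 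What the paper's route buys is independence from having both the absolute and relative modules at hand: it only requires discreteness of the spectrum of the three block Laplacians, which is the form in which the analytic input appears in the literature and which would survive in settings where only the end-degree operators plus finite-dimensional middle cohomology are controlled.
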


\begin{proof}
Let $(L^2\Omega^{0,q}(\reg(V),h),\overline{D}_{0,q})$ be any closed extension of $(\Omega_c^{0,q}(\reg(V)),\overline{\partial}_{0,q})$ and let  $\overline{D}_0+\overline{D}_0^*:L^2\Omega^{0,\bullet}(\reg(V),h)\rightarrow L^2\Omega^{0,\bullet}(\reg(V),h)$ be the corresponding rolled-up operator.  Let $H= L^2\Omega^{0,\bullet}(\reg(V),h)$ and let us fix  $S_c(V)$ as a dense $*$-subalgebra of $C(V)$. As in the previous cases $C(V)$ acts on $H$ by pointwise multiplication. Thanks to Prop. \ref{stable} we know that $\mathcal{D}(\overline{D}_0+\overline{D}_0^*)$ is preserved by the action of $S_c(V)$. Furthermore, given $f\in S_c(V)$, we have $[\overline{D}_0+\overline{D}_0^*,f]\omega=\overline{\partial}f\wedge \omega-(\overline{\partial}f\wedge)^*\omega$ where $(\overline{\partial}f\wedge)^*$ is the adjoint of the map $\eta\mapsto \overline{\partial}f\wedge\eta$. As $\overline{\pa}f\in \Omega_c^{0,1}(\reg(V))$ we can conclude that $[\overline{D}_0+\overline{D}_0^*,f]$ induces a bounded operator on $H$. Thus we are left with the task  to prove that $\overline{D}_0+\overline{D}_0^*$ has compact resolvent. This latter assertion is well known to be equivalent to saying that $\overline{D}_0+\overline{D}_0^*$ has entirely discrete  spectrum. Moreover the spectrum of  $\overline{D}_0+\overline{D}_0^*$  is discrete if and only if the spectrum of its square $(\overline{D}_0+\overline{D}_0^*)^2:L^2\Omega^{0,\bullet}(\reg(V),h)\rightarrow L^2\Omega^{0,\bullet}(\reg(V),h)$ is discrete. On the other hand $(\overline{D}_0+\overline{D}_0^*)^2$ decomposes as the direct sum of three self-adjoint operators: $\overline{D}^*_{0,0}\circ \overline{D}_{0,0}:L^2(\reg(V),h)\rightarrow L^2(\reg(V),h)$, $\overline{D}^*_{0,1}\circ \overline{D}_{0,1}+\overline{D}_{0,0}\circ \overline{D}_{0,0}^*:L^2\Omega^{0,1}(\reg(V),h)\rightarrow L^2\Omega^{0,1}(\reg(V),h)$ and $\overline{D}_{0,1}\circ \overline{D}_{0,1}^*:L^2\Omega^{0,2}(\reg(V),h)\rightarrow L^2\Omega^{0,2}(\reg(V),h)$. Therefore, in order to show that $\overline{D}_0+\overline{D}_0^*$ has discrete spectrum, it suffices to prove that the three operators above have discrete spectrum. By \cite{GrieserLesch} Th. 1.2 we know that $\overline{D}_{0,0}^*\circ \overline{D}_{0,0}:L^2(\reg(V),h)\rightarrow L^2(\reg(V),h)$ has discrete spectrum. According to \cite{FraBei} Th. 5.1 we know that $\overline{D}_{0,1}\circ \overline{D}_{0,1}^*:L^2\Omega^{0,2}(\reg(V),h)\rightarrow L^2\Omega^{0,2}(\reg(V),h)$ has discrete spectrum. Finally, as we know that $\ker(\overline{D}_{0,1})/\Im(\overline{D}_{0,0})\cong \ker(\overline{D}^*_{0,1}\circ \overline{D}_{0,1}+\overline{D}_{0,0}\circ \overline{D}_{0,0}^*)$ is finite dimensional and that both $\overline{D}_{0,1}\circ \overline{D}_{0,1}^*$ and $\overline{D}_{0,0}^*\circ \overline{D}_{0,0}$ have discrete spectrum, we can use \cite{FraBei} Cor. 2.1 in order to conclude that $\overline{D}^*_{0,1}\circ \overline{D}_{0,1}+\overline{D}_{0,0}\circ \overline{D}_{0,0}^*:L^2\Omega^{0,1}(\reg(V),h)\rightarrow L^2\Omega^{0,1}(\reg(V),h)$ has discrete spectrum too. This establishes the first point of this corollary. \end{proof}

\begin{corollary}
\label{noKL2}
Let $V\subset \mathbb{C}\mathbb{P}^n$ be a complex projective surface with $\dim(\sing(V))=0$. Let $h$ be the metric on $\reg(V)$ induced by the Fubini-Study metric of $\mathbb{C}\mathbb{P}^n$. We assume that $V$ is normal and 
that $R^1\pi_*\mathcal{O}_M$ is non-trivial.  Then there is no closed extension $(L^2\Omega^{0,q}(V,h),\overline{D}_{0,q})$ of $(\Omega_c^{0,q}(V),\overline{\partial}_{0,q})$  such that $[\overline{D}_0+\overline{D}_0^*]$ coincides with the Baum-Fulton-MacPherson class ${\rm Td}^{{\rm BFM}}_K (V)\in K^{\operatorname{top}}_0 (V)$ through the isomorphism  $K^{\operatorname{top}}_0(X)\cong KK_0(C(X),\mathbb{C})$. 

\end{corollary}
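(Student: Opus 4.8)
The plan is to argue exactly as in the proof of Corollary \ref{noL2Kcurves}, pushing both classes forward to a point and separating them numerically, with the Euler-characteristic inequality of Theorem \ref{noL2surf} playing the role that Theorem \ref{noL2} played in the curve case. The class $[\overline{D}_0+\overline{D}_0^*]\in KK_0(C(V),\mathbb{C})$ exists by Proposition \ref{noKL2-prop}, so the statement is well-posed for every closed extension under consideration.

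First I would introduce the collapse map $p:V\rightarrow \{\mathrm{pt}\}$ and the induced homomorphisms $p_*:KK_0(C(V),\mathbb{C})\rightarrow \mathbb{C}$ and $p_*:K^{\operatorname{top}}_0(V)\rightarrow \mathbb{C}$, which commute with the identification $K^{\operatorname{top}}_0(V)\cong KK_0(C(V),\mathbb{C})$. On the topological side one has the standard identity $p_*({\rm Td}_K^{{\rm BFM}}(V))\equiv p_*(\alpha_V[\mathcal{O}_V])=\chi(V,\mathcal{O}_V)$. On the analytic side, $p_*([\overline{D}_0+\overline{D}_0^*])=\Ind(\overline{D}_0+\overline{D}_0^*)$, the index of the rolled-up operator taken with respect to the even/odd grading of $L^2\Omega^{0,\bullet}(\reg(V),h)$, where $H^+=L^2\Omega^{0,0}\oplus L^2\Omega^{0,2}$ and $H^-=L^2\Omega^{0,1}$.

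The crucial step is to identify this index with the $L^2$-Euler characteristic of the complex. Since $\dim(\sing(V))=0$, Theorem \ref{noL2surf} guarantees that the complex $(L^2\Omega^{0,q}(\reg(V),h),\overline{D}_{0,q})$ has finite dimensional cohomology; hence the rolled-up operator is Fredholm on its domain, and by the Hilbert-complex formalism of \cite{BruLe} its index equals $\chi_{2,\overline{D}_{0,q}}(\reg(V),h)=\sum_q(-1)^q\dim(\ker(\overline{D}_{0,q})/\Im(\overline{D}_{0,q-1}))$. I would then invoke the final conclusion of Theorem \ref{noL2surf}: under the standing hypotheses that $V$ is normal and $R^1\pi_*\mathcal{O}_M$ is non-trivial, we have $\chi_{2,\overline{D}_{0,q}}(\reg(V),h)\neq \chi(V,\mathcal{O}_V)$.

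Combining these, $p_*([\overline{D}_0+\overline{D}_0^*])=\chi_{2,\overline{D}_{0,q}}(\reg(V),h)\neq \chi(V,\mathcal{O}_V)=p_*({\rm Td}_K^{{\rm BFM}}(V))$, so the two classes cannot agree in $KK_0(C(V),\mathbb{C})$, which is precisely the assertion. The heavy lifting has already been carried out in Theorem \ref{noL2surf}; the only genuinely delicate point left is the identification $\Ind(\overline{D}_0+\overline{D}_0^*)=\chi_{2,\overline{D}_{0,q}}(\reg(V),h)$, which requires the finite-dimensionality of \emph{all} the cohomology groups of the complex (so that the alternating sum is both meaningful and equal to the Fredholm index) rather than merely of the individual quotient spaces used to bound the Euler characteristic in Theorem \ref{noL2surf}.
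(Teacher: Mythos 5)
Your proposal is correct and follows essentially the same route as the paper: the paper likewise invokes Theorem \ref{noL2surf} to get $\Ind(\overline{D}_0+\overline{D}_0^*)=\chi_{2,\overline{D}_{0,q}}(\reg(V),h)<\chi(V,\mathcal{O}_V)$ and then pushes both classes forward to a point exactly as in Corollary \ref{noL2Kcurves}. Your write-up merely makes explicit the two points the paper leaves implicit (the grading used to define the index and the Fredholm/Hilbert-complex identification of the index with the $L^2$-Euler characteristic), which is a faithful expansion rather than a different argument.
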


\begin{proof}
Thanks to Theorem \ref{noL2surf} we know that $\Ind(\overline{D}_0+\overline{D}_0^*)=\chi_{2,\overline{D}_{0,q}}(\reg(V),h)<\chi(V,\mathcal{O}_V)$. Now the assertion follows by arguing as in the proof of Cor. \ref{noL2Kcurves}.
\end{proof}

\smallskip
\noindent
{\bf Remark.} The above results show that there are no  closed extension $(L^2\Omega^{0,q}(V,h),\overline{D}_{0,q})$ of $(\Omega_c^{0,q}(V),\overline{\partial}_{0,q})$ with the property that the associated rolled-up operator defines a K-homology class
realising analytically the Baum-Fulton-MacPherson. Still, there might exist a {\it different} Hilbert complex with such a property.
Recently John Lott has constructed such a Hilbert complex. See \cite{Lott}.

\smallskip
\noindent
We end this section with the following proposition that we believe to have an independent interest.

\begin{proposition}
Let $V\subset \mathbb{C}\mathbb{P}^n$ a complex projective surface with $\dim(\sing(V))=0$.  Let $h$ be the Hermitian metric on $\reg(V)$ induced by the Fubini-Study metric of  $\mathbb{C}\mathbb{P}^n$. Then the quotient $\mathcal{D}(\overline{\pa}_{0,1,\max})/\mathcal{D}(\overline{\pa}_{0,1,\min})$ is a finite dimensional vector space and we have the following formula $$\dim(\mathcal{D}(\overline{\pa}_{0,1,\max})/\mathcal{D}(\overline{\pa}_{0,1,\min}))=\chi(M,\mathcal{O}_M)-\chi(M,\mathcal{O}(Z-|Z|))$$ where $\pi:M\rightarrow V$ is a resolution of $M$, $Z$ is the unreduced exceptional set of $\pi$ and $|Z|$ is the reduced exceptional set of $\pi$.
\end{proposition}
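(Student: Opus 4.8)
The plan is to reduce the computation of the domain quotient to a difference of two $L^2$-Euler characteristics, and then to feed in the sheaf-theoretic identifications of the minimal and maximal $L^2$-Dolbeault cohomologies on the resolution. First I would pin down where the two closed extensions actually differ. Since $\dim(\sing(V))=0$, the degree-zero extensions agree, $\overline{\pa}_{0,0,\max}=\overline{\pa}_{0,0,\min}$ (by \cite{GrieserLesch} Th.~1.2 or \cite{PSCrelle}), so that $\mathcal{D}(\overline{\pa}_{0,0,\max})=\mathcal{D}(\overline{\pa}_{0,0,\min})$ and $\im(\overline{\pa}_{0,0,\max})=\im(\overline{\pa}_{0,0,\min})=:B$. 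At the top degree $\overline{\pa}_{0,2}=0$, since there are no $(0,3)$-forms on a surface, so $\mathcal{D}(\overline{\pa}_{0,2,\max})=\mathcal{D}(\overline{\pa}_{0,2,\min})=L^2\Omega^{0,2}(\reg(V),h)$. Hence the only nontrivial domain quotient is the one in bidegree $(0,1)$ that we wish to compute. Finite dimensionality of the cohomology of both complexes follows from \cite{PardonSternJAMS} and \cite{JRupp} Th.~1.9, and by the remark recalled in Section 2 this forces all the relevant images to be closed.

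The homological heart of the argument is the identity
$$\dim\big(\mathcal{D}(\overline{\pa}_{0,1,\max})/\mathcal{D}(\overline{\pa}_{0,1,\min})\big)=\chi_{2,\overline{\pa}_{0,q,\min}}(\reg(V),h)-\chi_{2,\overline{\pa}_{0,q,\max}}(\reg(V),h).$$
To establish it I would note that $\overline{\pa}_{0,1,\max}$ induces a surjection
$$\mathcal{D}(\overline{\pa}_{0,1,\max})/\mathcal{D}(\overline{\pa}_{0,1,\min})\longrightarrow \im(\overline{\pa}_{0,1,\max})/\im(\overline{\pa}_{0,1,\min})$$
whose kernel is isomorphic to $\ker(\overline{\pa}_{0,1,\max})/\ker(\overline{\pa}_{0,1,\min})$; indeed an element of $\ker(\overline{\pa}_{0,1,\max})$ lying in the minimal domain automatically lies in $\ker(\overline{\pa}_{0,1,\min})$, because the two operators coincide on $\mathcal{D}(\overline{\pa}_{0,1,\min})$. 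The associated short exact sequence gives
$$\dim\big(\mathcal{D}(\overline{\pa}_{0,1,\max})/\mathcal{D}(\overline{\pa}_{0,1,\min})\big)=\dim\frac{\ker(\overline{\pa}_{0,1,\max})}{\ker(\overline{\pa}_{0,1,\min})}+\dim\frac{\im(\overline{\pa}_{0,1,\max})}{\im(\overline{\pa}_{0,1,\min})}.$$
Since $B$ is common to both complexes, the first summand equals $\dim H^{0,1}_{2,\overline{\pa}_{\max}}(\reg(V),h)-\dim H^{0,1}_{2,\overline{\pa}_{\min}}(\reg(V),h)$, and since $L^2\Omega^{0,2}(\reg(V),h)$ is the common ambient space, the second equals $\dim H^{0,2}_{2,\overline{\pa}_{\min}}(\reg(V),h)-\dim H^{0,2}_{2,\overline{\pa}_{\max}}(\reg(V),h)$. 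Using that the degree-zero cohomologies coincide, the two degree-zero terms cancel, and summing yields exactly $\chi_{2,\overline{\pa}_{0,q,\min}}-\chi_{2,\overline{\pa}_{0,q,\max}}$.

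It then remains to identify the two Euler characteristics. For the minimal complex I would quote, as in the proof of Theorem \ref{noL2surf}, that $\chi_{2,\overline{\pa}_{0,q,\min}}(\reg(V),h)=\chi(M,\mathcal{O}_M)$ (\cite{Pardon}, \cite{PardonSternJAMS}, \cite{JRupp}). For the maximal complex I would invoke the identification of the maximal $L^2$-Dolbeault cohomology with twisted sheaf cohomology on the resolution, namely $H^{0,q}_{2,\overline{\pa}_{\max}}(\reg(V),h)\cong H^q(M,\mathcal{O}(Z-|Z|))$ for every $q$, whence $\chi_{2,\overline{\pa}_{0,q,\max}}(\reg(V),h)=\chi(M,\mathcal{O}(Z-|Z|))$; this is the content of Ruppenthal's analysis in \cite{JRupp}, in which the multiplicities of the unreduced exceptional divisor $Z$ measure precisely the extra singular behaviour allowed by the maximal extension near $\sing(V)$. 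Substituting both identifications into the displayed identity produces the claimed formula.

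The routine part is the homological bookkeeping of the second paragraph. I expect the main obstacle to be the last identification, $\chi_{2,\overline{\pa}_{0,q,\max}}(\reg(V),h)=\chi(M,\mathcal{O}(Z-|Z|))$: describing the maximal domain in sheaf-theoretic terms — that is, controlling exactly which $L^2$-forms on $\reg(V)$ extend across the exceptional set with poles bounded by $Z-|Z|$ — is the genuinely analytic input, and it is here that both the surface hypothesis and the precise geometry of the induced Fubini--Study metric enter. Once that identification is granted, the equality $\dim(\mathcal{D}(\overline{\pa}_{0,1,\max})/\mathcal{D}(\overline{\pa}_{0,1,\min}))=\chi(M,\mathcal{O}_M)-\chi(M,\mathcal{O}(Z-|Z|))$ follows, and in particular the quotient is finite dimensional, completing the proof.
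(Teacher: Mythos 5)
Your proposal is correct, and it reaches the paper's intermediate formula by a different route. The paper does not prove the identity
\begin{equation*}
\dim\bigl(\mathcal{D}(\overline{\pa}_{0,1,\max})/\mathcal{D}(\overline{\pa}_{0,1,\min})\bigr)
=\dim H^{0,1}_{2,\overline{\pa}_{\max}}-\dim H^{0,1}_{2,\overline{\pa}_{\min}}
+\dim H^{0,2}_{2,\overline{\pa}_{\min}}-\dim H^{0,2}_{2,\overline{\pa}_{\max}}
\end{equation*}
by hand: it verifies that the mixed quotients $\ker(\overline{\pa}_{0,q,\max})/\im(\overline{\pa}_{0,q-1,\min})$ are finite dimensional for $q=0,1,2$ (using $\overline{\pa}_{0,0,\max}=\overline{\pa}_{0,0,\min}$ exactly as you do) and then invokes a general Hilbert-complex result, Th.~1.4 and Cor.~1.1 of \cite{FBJTA}, which simultaneously gives finite dimensionality of the domain quotient and the formula above. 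You instead prove this identity directly: the short exact sequence
$0\to \ker_{\max}/\ker_{\min}\to \mathcal{D}_{\max}/\mathcal{D}_{\min}\to \im_{\max}/\im_{\min}\to 0$
induced by $[\omega]\mapsto[\overline{\pa}_{0,1,\max}\omega]$ is correct (the kernel computation uses $\ker_{\max}\cap\mathcal{D}_{\min}=\ker_{\min}$, which you justify), and the two summands are identified with the cohomological differences using the common image $B=\im(\overline{\pa}_{0,0,\max})=\im(\overline{\pa}_{0,0,\min})$ in degree one and the common ambient space $L^2\Omega^{0,2}$ (with $\overline{\pa}_{0,2}=0$) in degree two; finite dimensionality of both summands follows from \cite{JRupp} and \cite{PardonSternJAMS}, so your argument also yields the finiteness claim, not just the formula. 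From that point on the two proofs coincide: both conclude via $\chi_{2,\overline{\pa}_{0,q,\min}}=\chi(M,\mathcal{O}_M)$ and $\chi_{2,\overline{\pa}_{0,q,\max}}=\chi(M,\mathcal{O}(Z-|Z|))$, citing \cite{PardonSternJAMS} and \cite{JRupp}. What your route buys is a self-contained, elementary proof specific to the surface case (and, incidentally, cleaner bookkeeping than the paper's displayed computation, which contains index typos); what the paper's route buys is generality, since the cited result of \cite{FBJTA} applies to Hilbert complexes in arbitrary degree and dimension under the mixed finite-dimensionality hypotheses, rather than relying on the low-dimensional structure (only one nontrivial quotient, vanishing of $\overline{\pa}$ in top degree) that your argument exploits.
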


\begin{proof}
We already know that $\ker(\overline{\pa}_{0,q,\max})/\Im(\overline{\pa}_{0,q-1,\min})$ is finite dimensional for each $q=0,...,2$. In fact, when $q=0$, this is just $\ker(\overline{\pa}_{0,q,\max})$ and when $q=2$ it becomes $L^2\Omega^{0,2}(\reg(V),h)/\Im(\overline{\pa}_{0,1,\min})$ $=$ $H^{0,2}_{2,\overline{\pa}_{\min}}(\reg(V),h)$. Finally when $q=1$ we have $\ker(\overline{\pa}_{0,1,\max})/\Im(\overline{\pa}_{\min})=$ $\ker(\overline{\pa}_{0,1,\max})/\Im(\overline{\pa}_{\max})$ $=H^{0,2}_{2,\overline{\pa}_{\max}}(\reg(V),h)$ as we have already recalled above that $\overline{\pa}_{\min}=\overline{\pa}_{\max}$. Hence we are in position to apply \cite{FBJTA} Th. 1.4 and Cor. 1.1 and this tells us that $\mathcal{D}(\overline{\pa}_{0,1,\max})/\mathcal{D}(\overline{\pa}_{0,1,\min})$ is finite dimensional and that 
\begin{align}
& \nonumber \dim(\mathcal{D}(\overline{\pa}_{0,1,\max})/\mathcal{D}(\overline{\pa}_{0,1,\min}))=\\
& \nonumber  \dim(H^{0,1}_{2,\overline{\pa}_{\max}}(\reg(V),h))-\dim(H^{0,2}_{2,\overline{\pa}_{\max}}(\reg(V),h))+\dim(H^{0,2}_{2,\overline{\pa}_{\min}}(\reg(V),h))-\dim(H^{0,1}_{2,\overline{\pa}_{\min}}(\reg(V),h))\\
& \nonumber =\sum_{q=0}^2\dim(H^{0,2}_{2,\overline{\pa}_{\min}}(\reg(V),h))-\sum_{q=0}^2\dim(H^{0,2}_{2,\overline{\pa}_{\max}}(\reg(V),h))= \chi(M,\mathcal{O}_M)-\chi(M,\mathcal{O}(Z-|Z|))
\end{align}
where the last equality follows by the results proved in \cite{PardonSternJAMS} and \cite{JRupp}. The proof is thus complete.
\end{proof}

\noindent
There are many examples of normal projective surfaces with non-rational singularities.  For instance any surface  $S\subset \mathbb{C}\mathbb{P}^3$ which is a projective cone over a plane smooth curve having degree  bigger than 2 is a normal projective surface with non-rational singularities. A simple example is provided by the surface $S\subset \mathbb{C}\mathbb{P}^3$ defined by  $X^3+Y^3+Z^3=0$. More generally   Artin's criterion, see \cite{compactcomplexsurfaces} page 94, can be used to construct examples of normal projective surfaces with non-rational singularities by contracting exceptional curves. Finally we mention that other interesting examples of normal projective surfaces with non-rational singularities are given  in \cite{Cheltsov} Section 3.

\section{Rational singularities.}
 We begin this section by
 recalling that  in the context of complex spaces Levy has generalized the results of Baum-Fulton-MacPherson,
defining in particular a homomorphism $\alpha_X:K^{\operatorname{hol}}_0(X)\rightarrow K^{\operatorname{top}}_0(X)$,
with $K^{\operatorname{hol}}_0(X)$ equal to the $K$-homology group of coherent analytic sheaves on the complex 
space $X$. See \cite{Roni}.

 In this section we are interested in complex spaces with rational singularities.
 Recall that a complex space $X$ is said to have rational singularities if $X$ is normal and there exists a resolution $\pi:M\rightarrow X$ such that $R^k\pi_*\mathcal{O}_M=0$ for $k>0$. 
In the setting of complex projective varieties well known examples are provided by  log-terminal singularities, canonical singularities and toric singularities. In the framework of complex surfaces another well known class is provided by  Du Val singularities.  As a reference for this topic we recommend \cite{Artin}, \cite{JKol}, \cite{KoMo}, \cite{Laufer} and \cite{Miles}. Let $X$ be a compact and irreducible complex space with only rational singularities. Let $\pi:M\rightarrow X$ be a resolution of $X$. Let $g$ be any Hermitian metric on $M$ and let $\gamma$ be the Hermitian metric on $\reg(X)$ induced by $g$ through $\pi$. Consider the complex of preasheves  given by the assignment $U\mapsto \mathcal{D}(\overline{\partial}_{0,q,\max})\subset L^2\Omega^{0,q}(\reg(U),\gamma|_{\reg(U)})$ and let us denote by $(\mathcal{L}^{0,q}, \overline{\partial}_{0,q})$ the corresponding complex of sheaves arising by sheafification. We a little abuse of notation we have labeled by $\overline{\partial}_{0,q}:\mathcal{L}^{0,q}\rightarrow \mathcal{L}^{0,q+1}$ the morphism of sheaves induced by $\overline{\partial}_{0,q,\max}$. We have the following property:
\begin{proposition}
\label{ratio}
Let $X$ be a compact and irreducible  complex space. Then the complex of fine sheaves $(\mathcal{L}^{0,q}, \overline{\partial}_{0,q})$ is a  resolution of $\mathcal{O}_X$ if and only if $X$ has rational singularities.
\end{proposition}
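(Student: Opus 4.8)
The plan is to reduce the statement to an identification of the cohomology sheaves of $(\mathcal{L}^{0,q},\overline{\partial}_{0,q})$ with the higher direct image sheaves $R^q\pi_*\mathcal{O}_M$, after which the two defining conditions of a rational singularity will fall out directly. First I would exploit that $\pi|_{M\setminus D}\colon (M\setminus D,g|_{M\setminus D})\to(\reg(X),\gamma)$ is a holomorphic isometry, where $D=\pi^{-1}(\sing(X))$. Under pullback by this isometry a section of $\mathcal{L}^{0,q}$ over an open set $U\subseteq X$, that is, an element of $\mathcal{D}(\overline{\partial}_{0,q,\max})$ over $\reg(U)$ with respect to $\gamma$, corresponds exactly to an $L^2_{\mathrm{loc}}$ $(0,q)$-form on $\pi^{-1}(U)\setminus D$ whose distributional $\overline{\partial}$ is again $L^2_{\mathrm{loc}}$ with respect to $g$. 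Denoting by $\mathcal{A}^{0,q}$ the sheaf on $M$ defined by this latter assignment $V\mapsto\{\text{maximal-domain forms on } V\setminus D\}$, the isometry yields $\mathcal{L}^{0,q}=\pi_*\mathcal{A}^{0,q}$, compatibly with the differentials and with the natural augmentation $\mathcal{O}_X\to\mathcal{L}^{0,0}$ induced by restriction of holomorphic functions to $\reg(X)$.

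The analytic heart of the argument, and the step I expect to be the \textbf{main obstacle}, is to prove that $(\mathcal{A}^{0,\bullet},\overline{\partial})$ is a fine resolution of $\mathcal{O}_M$ on $M$. Fineness is immediate, since these sheaves are stable under multiplication by smooth cut-off functions. For exactness one argues locally: away from $D$ the complex is the ordinary Dolbeault complex and the claim is the usual $\overline{\partial}$-Poincar\'e lemma, whereas at a point $x_0\in D$ one must establish a local $L^2$-$\overline{\partial}$-lemma for the maximal complex relative to the \emph{smooth} metric $g$ near the normal-crossing divisor. In degree $0$ this is an $L^2$ Riemann-type extension statement: an $L^2_{\mathrm{loc}}$ holomorphic function on $V\setminus D$ extends holomorphically across $D$, giving $\mathcal{H}^0(\mathcal{A}^\bullet)=\mathcal{O}_M$. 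In positive degrees one needs local solvability of $\overline{\partial}u=f$ with $u\in L^2_{\mathrm{loc}}$ for $\overline{\partial}$-closed $f$; since $D$ has measure zero and $g$ extends smoothly across $D$, this should follow from H\"ormander-type $L^2$ estimates on small pseudoconvex neighbourhoods together with the removability of $D$ for $L^2$ forms, as in the $L^2$-theory of \cite{JRupp} and \cite{Bei2017}. This yields $\mathcal{H}^q(\mathcal{A}^\bullet)=0$ for $q\geq 1$.

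Granting this, I would pass from $M$ to $X$ by a Leray-type argument. Because each $\mathcal{A}^{0,q}$ is fine, hence soft, and because $\pi$ is proper, proper base change combined with the vanishing of higher cohomology of soft sheaves on the compact fibres gives $R^j\pi_*\mathcal{A}^{0,q}=0$ for $j\geq 1$; thus the $\mathcal{A}^{0,q}$ are $\pi_*$-acyclic. Consequently the complex $\pi_*\mathcal{A}^{0,\bullet}=\mathcal{L}^{0,\bullet}$ computes $R\pi_*\mathcal{O}_M$, so that
\[
\mathcal{H}^0(\mathcal{L}^\bullet)=\pi_*\mathcal{O}_M=\widetilde{\mathcal{O}}_X,\qquad \mathcal{H}^q(\mathcal{L}^\bullet)=R^q\pi_*\mathcal{O}_M\quad(q\geq 1),
\]
where $\pi_*\mathcal{O}_M=\widetilde{\mathcal{O}}_X$ comes from the factorisation of $\pi$ through the normalization $\nu\colon\widetilde{X}\to X$ together with $\nu_*\mathcal{O}_{\widetilde{X}}=\widetilde{\mathcal{O}}_X$.

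Finally I would read off the equivalence. The augmented complex $0\to\mathcal{O}_X\to\mathcal{L}^{0,0}\to\mathcal{L}^{0,1}\to\cdots$ is exact, i.e. $(\mathcal{L}^{0,q},\overline{\partial}_{0,q})$ is a resolution of $\mathcal{O}_X$, if and only if the augmentation $\mathcal{O}_X\to\mathcal{H}^0(\mathcal{L}^\bullet)=\widetilde{\mathcal{O}}_X$ is an isomorphism and $\mathcal{H}^q(\mathcal{L}^\bullet)=R^q\pi_*\mathcal{O}_M=0$ for every $q\geq 1$. The first condition is precisely the normality of $X$, and the second is precisely the vanishing of the higher direct images; jointly they constitute the definition of $X$ having rational singularities. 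Since this characterisation is known to be independent of the chosen resolution, the proof is complete.
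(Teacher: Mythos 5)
Your proposal is correct and follows essentially the same route as the paper: both identify $\mathcal{L}^{0,\bullet}$ with $\pi_*$ of the fine complex of maximal-domain $L^2$ forms on $M\setminus D$ (your $\mathcal{A}^{0,\bullet}$, the paper's $\mathcal{C}^{0,\bullet}_D$), show that this complex is a fine resolution of $\mathcal{O}_M$, and then read off normality from $\mathcal{H}^0(\mathcal{L}^\bullet)=\pi_*\mathcal{O}_M=\widetilde{\mathcal{O}}_X$ and the condition $R^q\pi_*\mathcal{O}_M=0$ from exactness in positive degrees. The only real difference is at what you call the main obstacle: the paper handles it by identifying $\mathcal{C}^{0,\bullet}_D$ with the ordinary maximal $L^2$-Dolbeault complex of $(M,g)$, arguing as in \cite{PardonSternJAMS} Prop.~1.12 and 1.17, rather than re-proving a local $L^2$-$\overline{\partial}$-lemma near $D$ via H\"ormander estimates as you sketch.
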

\begin{proof}
Let $D\subset M$ be the normal crossing divisor given by $D=\pi^{-1}(\sing(X))$. Consider the complex of preasheves  given by the assignment $U\mapsto \mathcal{D}(\overline{\partial}_{0,q,\max})\subset L^2\Omega^{0,q}(U\setminus (U\cap D),g|_{U\setminus (U\cap D)})$ and let us denote by $(\mathcal{C}^{0,q}_{D}, \overline{\partial}_{0,q})$ the corresponding complex of sheaves arising by sheafification. Besides $(\mathcal{C}^{0,q}_{D}, \overline{\partial}_{0,q})$ let us consider also the complex of sheaves $(\mathcal{C}^{0,q}, \overline{\partial}_{0,q})$ defined as sheafification of the complex of preasheves  given by $U\mapsto \mathcal{D}(\overline{\partial}_{0,q,\max})\subset L^2\Omega^{0,q}(U,g|_U)$. Arguing as in \cite{PardonSternJAMS} Prop. 1.12 and 1.17 we can show that we have an equality of complexes of sheaves $(\mathcal{C}^{0,q}, \overline{\partial}_{0,q})$ $=(\mathcal{C}^{0,q}_D, \overline{\partial}_{0,q})$. As $(\mathcal{C}^{0,q}, \overline{\partial}_{0,q})$ is a fine resolution of $\mathcal{O}_M$ we know that  $(\mathcal{C}^{0,q}_D, \overline{\partial}_{0,q})$ is a fine resolution of $\mathcal{O}_M$ as well. Assume now that $X$ has rational singularities. It is clear that $\ker(\mathcal{L}^{0,0}\stackrel{\overline{\partial}}{\rightarrow}\mathcal{L}^{0,1})=\pi_*\ker(\mathcal{C}_D^{0,0}\stackrel{\overline{\partial}}{\rightarrow}\mathcal{C}_D^{0,1})$. On the other hand $\ker(\mathcal{C}_D^{0,0}\stackrel{\overline{\partial}}{\rightarrow}\mathcal{C}_D^{0,1})$ $=\ker(\mathcal{C}^{0,0}\stackrel{\overline{\partial}}{\rightarrow}\mathcal{C}^{0,1})=\mathcal{O}_M$. Thus we showed that 
$\ker(\mathcal{L}^{0,0}\stackrel{\overline{\partial}}{\rightarrow}\mathcal{L}^{0,1})=\pi_*\mathcal{O}_M$ and since $X$ is normal we have $\mathcal{O}_X=\pi_*\mathcal{O}_M=\ker(\mathcal{L}^{0,0}\stackrel{\overline{\partial}}{\rightarrow}\mathcal{L}^{0,1})$.
Finally, as $X$ has rational singularities and $\mathcal{L}^{0,q}=\pi_*\mathcal{C}_D^{0,q}$, we can conclude that $(\mathcal{L}^{0,q},\overline{\partial}_{0,q})$ is an exact complex of  sheaves and thus a  resolution of $\mathcal{O}_X$. Assume now that the complex of sheaves $(\mathcal{L}^{0,q}, \overline{\partial}_{0,q})$ is a  resolution of $\mathcal{O}_X$. As observed above for any compact and irreducible Hermitian complex space we have $\ker(\mathcal{L}^{0,0}\stackrel{\overline{\partial}}{\rightarrow}\mathcal{L}^{0,1})=\pi_*\mathcal{O}_M$. On the other hand it is clear that $\pi_*\mathcal{O}_M=\tilde{\mathcal{O}}_X$. As we assumed that $(\mathcal{L}^{0,q}, \overline{\partial}_{0,q})$ is a  resolution of $\mathcal{O}_X$ we are led to conclude that $\mathcal{O}_X=\tilde{\mathcal{O}}_X$, that is $X$ is normal. Finally by the fact that $\mathcal{L}^{0,q}=\pi_*\mathcal{C}_D^{0,q}$ and that by assumption $(\mathcal{L}^{0,q}, \overline{\partial}_{0,q})$ is a  resolution of $\mathcal{O}_X$  we can conclude that $R^k\pi_*\mathcal{O}_M=0$ for each $k>0$. In conclusion $X$ has rational singularities as desired.
\end{proof}

We recall also that $X$ has rational singularities if and only if the structure sheaf $\mathcal{O}_X$ can be resolved by using a complex of sheaf built from the minimal $L^2$-$\overline{\partial}$ complex, see \cite{RUIMRN}.
It is therefore natural to expect that in this setting the class $\alpha_X [\mathcal{O}_X]$, which is the
generalization given by Levy of the Baum-Fulton-MacPherson class,
coincides with the pushforward  of the analytic Todd class of $M$. We now proceed to establish this result.
\vspace{0.2cm} 

Let $(X,h)$ be a  compact and irreducible Hermitian complex space with only rational singularities. Since, by assumption,
$X$ is normal we know that $\mathcal{O}_X=\pi_*\mathcal{O}_M$. Therefore in $K^{\operatorname{hol}}_0(X)$
we have 
\begin{equation}
\label{chain}
[\mathcal{O}_X]=[\pi_*\mathcal{O}_M]=\pi_{!}([\mathcal{O}_M]).
\end{equation}
Now using the map $\alpha_X:K^{\operatorname{hol}}_0(X)\rightarrow K^{\operatorname{top}}_0(X)$, defined by  Levy in 
\cite{Roni},
and the fact that $K^{\operatorname{top}}_0(X)\cong KK_0(C(X),\mathbb{C})$ we can conclude that \begin{equation}
\label{chain2}
\alpha_X([\mathcal{O}_X])=\alpha_X(\pi_{!}([\mathcal{O}_M]))=\pi_*(\alpha_M([\mathcal{O}_M]))=\pi_* [\overline{\eth}_M]
\end{equation}
In the projective case this means that
\begin{equation}
\label{chain2-bis}
{\rm Td}^{{\rm BFM}}_K (X)=\pi_* [\overline{\eth}_M]
\end{equation}
Consequently, by Theorem \ref{Kequalities}, we obtain that under the additional assumption that $\dim (\sing (X))=0$
we have that $$\alpha_X([\mathcal{O}_X])=[\overline{\eth}_{\operatorname{rel}}]$$
and if  $X$ is a projective variety, this means that
\begin{equation}\label{chain2-ter}
{\rm Td}^{{\rm BFM}}_K (X)=[\overline{\eth}_{\operatorname{rel}}]\,.
\end{equation}
In the  equality \eqref{chain2} we used  that $\alpha_M([\mathcal{O}_M])=[\overline{\eth}_M]$ and we now explain why this equality holds. By Lemma 3.4.(c) in Levy's article \cite{Roni} we know that $\alpha_M([\mathcal{O}_M])$ is the Poincar\'e dual 
of the class in $K^0 (M)$ corresponding to the locally free sheaf given by $\mathcal{O}_M$; the latter is obviously the 
product line bundle $M\times \mathbb{C}$ over $M$. The Poincar\'e dual of this element is obtained by applying the Thom
isomoprhism $K^0 (M) \to K^0 (\Lambda^{1,0} M)$ followed by the quantization isomorphism $ K^0 (\Lambda^{1,0} M)\to K_0 (M)$. But the Thom isomorphism applied to the trivial line bundle is equal to the class
defined by the symbol of $\overline{\pa}+\overline{\pa}^t$, see \cite{LawMich}  Theorem C8 p. 387,  and the associated K-homology class is precisely 
$[\overline{\eth}_M]$, as required.
 Summarizing we have:
 
 \begin{proposition}\label{prop-equality-rational}
If $(X,h)$ is  a  compact and irreducible Hermitian complex space, then the analogue of the Baum-Fulton-MacPherson class constructed by Levy, $\alpha_X([\mathcal{O}_X])\in K^{\operatorname{top}}_0 (X)$,
coincides with $\pi_* [\overline{\eth}_M]$. In addition, requiring that $\dim(\sing(X))=0$, the class $\alpha_X([\mathcal{O}_X])\in K^{\operatorname{top}}_0 (X)$ coincides also with  $[\overline{\eth}_{\operatorname{rel}}]$ through the isomorphism 
$K^{\operatorname{top}}_0(X)\cong KK_0(C(X),\mathbb{C})$. Consequently,
$\Ch_* [\overline{\eth}_{\operatorname{rel}}]=\Ch_* (\alpha_X([\mathcal{O}_X]))$ in $H_* (X,\mathbb{Q})$. In particular,
if $X$ is projective then 
\begin{equation}\label{eq:equality-rational}
\Ch_* [\overline{\eth}_{\operatorname{rel}}]={\rm Td}^{{\rm BFM}}_* (X) \quad\text{in}\quad H_* (X,\mathbb{Q}).
\end{equation}
\end{proposition}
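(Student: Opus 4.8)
The plan is to assemble the proof from the chain of identities set up in the paragraphs preceding the statement (in which, following the section's standing hypothesis, $X$ has only rational singularities), so that the argument reduces to three linked steps. First I would record the purely sheaf-theoretic input: since $X$ has rational singularities it is normal, whence $\mathcal{O}_X=\pi_*\mathcal{O}_M$, and the vanishing $R^k\pi_*\mathcal{O}_M=0$ for $k>0$ guarantees that pushing $[\mathcal{O}_M]$ forward along $\pi$ in the $K$-homology of coherent analytic sheaves produces exactly $[\pi_*\mathcal{O}_M]$. This is precisely the identity \eqref{chain}, namely $[\mathcal{O}_X]=\pi_{!}([\mathcal{O}_M])$ in $K^{\operatorname{hol}}_0(X)$.

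Next I would invoke the Riemann--Roch naturality of Levy's transformation, $\alpha_X\circ\pi_!=\pi_*\circ\alpha_M$ from \cite{Roni}, to transport \eqref{chain} to topological $K$-homology and obtain \eqref{chain2}, that is $\alpha_X([\mathcal{O}_X])=\pi_*(\alpha_M([\mathcal{O}_M]))$. The genuinely geometric step, and the one I expect to be the crux, is the identification $\alpha_M([\mathcal{O}_M])=[\overline{\eth}_M]$ on the smooth manifold $M$. Here I would use Lemma 3.4(c) of \cite{Roni} to express $\alpha_M([\mathcal{O}_M])$ as the Poincar\'e dual of the class of the trivial line bundle $\mathcal{O}_M=M\times\mathbb{C}$ in $K^0(M)$; this duality is realised by the Thom isomorphism $K^0(M)\to K^0(\Lambda^{1,0}M)$ followed by the quantization map $K^0(\Lambda^{1,0}M)\to K_0(M)$. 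Applying the Thom isomorphism to the trivial bundle yields the symbol class of $\overline{\partial}+\overline{\partial}^t$ (Theorem C.8 of \cite{LawMich}), whose associated $K$-homology class is by definition $[\overline{\eth}_M]$. Combining these gives $\alpha_X([\mathcal{O}_X])=\pi_*[\overline{\eth}_M]$, the first assertion.

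For the remaining assertions I would specialise to $\dim(\sing(X))=0$ and feed in Theorem \ref{Kequalities}, which already yields $\pi_*[\overline{\eth}_M]=[\overline{\eth}_{\operatorname{rel}}]$; chaining with the above gives $\alpha_X([\mathcal{O}_X])=[\overline{\eth}_{\operatorname{rel}}]$ under $K^{\operatorname{top}}_0(X)\cong KK_0(C(X),\mathbb{C})$. Applying the Chern character $\Ch_*$ to both sides then produces $\Ch_*[\overline{\eth}_{\operatorname{rel}}]=\Ch_*(\alpha_X([\mathcal{O}_X]))$, and in the projective case I would invoke the defining relation ${\rm Td}^{{\rm BFM}}_*(X)=\Ch_*(\alpha_X([\mathcal{O}_X]))$ recalled earlier to conclude \eqref{eq:equality-rational}. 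Beyond this bookkeeping, the one delicate point is ensuring that the naturality square for $\alpha$ genuinely applies to the proper map $\pi$ relating the singular space $X$ to its resolution $M$; this is where I would take care to cite the precise functoriality statements of \cite{Roni}.
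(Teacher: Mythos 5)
Your proposal is correct and follows essentially the same route as the paper: the chain $[\mathcal{O}_X]=\pi_!([\mathcal{O}_M])$ (using normality and $R^k\pi_*\mathcal{O}_M=0$), Levy's naturality $\alpha_X\circ\pi_!=\pi_*\circ\alpha_M$, the identification $\alpha_M([\mathcal{O}_M])=[\overline{\eth}_M]$ via Lemma 3.4(c) of \cite{Roni} and the Thom-isomorphism computation from \cite{LawMich}, then Theorem \ref{Kequalities} and the Chern character. Your explicit remark that the vanishing of the higher direct images is what makes $\pi_![\mathcal{O}_M]=[\pi_*\mathcal{O}_M]$ is a point the paper leaves implicit, but it is the same argument.
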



%
%
%
\section{Invariance by birational equivalence}
 Let $V$ be a complex projective variety. Let $\Gamma:=\pi_1(V)$ be its fundamental group, let $B\Gamma$ be the classifying space of $\Gamma$ and let $r:V\rightarrow B\Gamma$ be a classifying map for the universal covering of $V$,  $b:\widetilde{V}\rightarrow V$. 
 Assume first that $V$ is smooth and consider $[\overline{\eth}_{V}]\in K_0 (V)$. Recall that  $[\overline{\eth}_{V}]={\rm Td}_K (V):= \alpha_V [\mathcal{O}_V]$ in $K_0 (V)$. It is proved in 
  \cite{WBlock} that the class $r_*  ( {\rm Td}_K (V))$  in $K_0(B\Gamma)$
   is a birational invariant of $V$. This means the following. Let $\psi: W\dashrightarrow V$ be a birational equivalence;
   it is well known, see \cite{Griffiths-Harris}, that $\psi$ induces
 an isomorphism between the fundamental groups of $W$ and $V$ and thus a homotopy equivalence 
   between the respective classifying spaces. Put it differently, we can choose 
   $B\Gamma$ as a classifying space for the universal covering of $W$, $\widetilde{W}\to W$. 
   If now $s:W\to B\Gamma$ is a classifying map
   associated to $\widetilde{W}\to W$, then the birational invariance we have alluded to means that 
    $$ s_* ({\rm Td}_K (W)) = r_* ({\rm Td}_K (V))\quad\text{in}\quad K_0(B\Gamma)\,.$$
    We can rewrite this as
    \begin{equation}\label{BLWE}
    s_* [ \overline{\eth}_{W}] = r_* ( [\overline{\eth}_{V} ] )\quad\text{in}\quad K_0(B\Gamma)\,.
    \end{equation}
It is then clear  that the higher Todd genera of $V$, defined as
  $$\{\langle \alpha, r_* {\rm Td}_* (V) \rangle\,,\quad \alpha\in H_* (B\Gamma)\}\,,$$
  are {\it birational invariants} of $V$.
  
  \medskip
   In this section we want to investigate the analogue of these properties in the singular case.
  The first important remark we have to make is that, unlike  in the smooth case, in the singular case the fundamental group is not a birational invariant. We can consider for instance a smooth plane curve $C$ of positive genus. Its projective cone is simply connected but this is not true for its resolution  which is a $\mathbb{P}^1$-bundle over $C$.
  There are, however, interesting special cases in which it is. One of these is provided by complex projective surfaces with only rational singularities, see \cite{Briesk}. Another important class is provided by  projective varieties with log-terminal singularities, see \cite{Takayama}. We shall include these particular cases in the following general situation:
  
  \medskip\noindent
    $V$ and $W$ will be two complex projective varieties 
  with $\dim(\sing(V))=\dim(\sing(W))=0$ and with $\psi:W\dashrightarrow V$  a birational equivalence between them;
  we will  assume that there exist resolutions $\pi:M\rightarrow V$ and $\rho:N\rightarrow W$ such that both maps $\pi_*:\pi_1(M)\rightarrow \pi_1(V)$ and $\rho_*:\pi_1(N)\rightarrow \pi_1(W)$ are isomorphisms.
  Notice that, consequently, $V$ and $W$ have isomorphic fundamental groups. Indeed we know that both $\pi_*:\pi_1(M)\rightarrow \pi_1(V)$ and $\rho_*:\pi_1(N)\rightarrow \pi_1(W)$ are isomorphisms. Moreover $\psi$, $\pi$ and $\rho$ induce a birational map $\lambda: N\dashrightarrow M$ which in turn induces an isomorphism between $\pi_1(M)$ and $\pi_1(N)$.  Summarizing: $\pi_1(W)\cong \pi_1(N)\cong \pi_1(M)\cong \pi_1(V)$. 
%
We can thus identify  the classifying spaces for the universal coverings of $W$ and $V$ with a common space $B\Gamma$.
We have now all the ingredients for the main result of this section:

\begin{proposition}\label{sTability}
Let $\psi: W\dashrightarrow V$ be a  birational equivalence between  complex projective varieties
with $\dim(\sing(V))=\dim(\sing(W))=0$. Assume that there exist resolutions $\pi:M\rightarrow V$ and $\rho:N\rightarrow W$ such that both maps $\pi_*:\pi_1(M)\rightarrow \pi_1(V)$ and $\rho_*:\pi_1(N)\rightarrow \pi_1(W)$ are isomorphisms.   Let $s: W\rightarrow B\Gamma$ and $r:V\rightarrow B\Gamma$ be  classifying maps for the universal coverings 
    $\widetilde{W}\rightarrow W$ and $\widetilde{V}\rightarrow V$. Then, with the above notations,
 $$s_*[\overline{\eth}^W_{\mathrm{rel}}]=r_*[\overline{\eth}^V_{\mathrm{rel}}]\quad\text{in}\quad K_0(B\Gamma)\,.$$
\end{proposition}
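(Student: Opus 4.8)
The plan is to reduce the assertion to the already-known smooth case recalled in \eqref{BLWE}, using Theorem \ref{Kequalities} as the bridge between the singular classes and the classes on the resolutions. First I would apply Theorem \ref{Kequalities} to both varieties: since $\dim(\sing(V))=\dim(\sing(W))=0$, it yields $[\overline{\eth}^V_{\mathrm{rel}}]=\pi_*[\overline{\eth}_M]$ in $K_0(V)$ and $[\overline{\eth}^W_{\mathrm{rel}}]=\rho_*[\overline{\eth}_N]$ in $K_0(W)$. Here $M$ and $N$ are the given resolutions, which I may and do take smooth and projective (resolutions produced by blowing up along smooth centres are projective, and further blow-ups do not alter the fundamental group, so the hypotheses on $\pi_*$ and $\rho_*$ are preserved). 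Applying $r_*$ and $s_*$ and using functoriality of the pushforward in analytic K-homology, I obtain $r_*[\overline{\eth}^V_{\mathrm{rel}}]=(r\circ\pi)_*[\overline{\eth}_M]$ and $s_*[\overline{\eth}^W_{\mathrm{rel}}]=(s\circ\rho)_*[\overline{\eth}_N]$. It therefore suffices to establish the equality $(s\circ\rho)_*[\overline{\eth}_N]=(r\circ\pi)_*[\overline{\eth}_M]$ in $K_0(B\Gamma)$.

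Next I would identify $r\circ\pi$ and $s\circ\rho$ as classifying maps for the universal covers of $M$ and $N$. Because $\pi_*:\pi_1(M)\to\pi_1(V)$ is an isomorphism by hypothesis and $r_*:\pi_1(V)\to\Gamma$ is an isomorphism since $r$ classifies $\widetilde{V}\to V$, the composite $(r\circ\pi)_*=r_*\circ\pi_*$ is an isomorphism, so $r\circ\pi:M\to B\Gamma$ classifies the universal cover of $M$; symmetrically $s\circ\rho:N\to B\Gamma$ classifies the universal cover of $N$. The map $\lambda:N\dashrightarrow M$ induced by $\psi$, $\pi$, $\rho$ is a birational equivalence of smooth projective varieties, and on fundamental groups $\lambda_*=(\pi_*)^{-1}\circ\psi_*\circ\rho_*$, where $\psi_*:\pi_1(W)\to\pi_1(V)$ is the isomorphism induced by $\psi$. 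Under the common identification $\pi_1(W)\cong\pi_1(N)\cong\pi_1(M)\cong\pi_1(V)\cong\Gamma$ fixed in the setup, the classifying maps are chosen so that $s_*=r_*\circ\psi_*$, and then a direct computation gives $(s\circ\rho)_*=r_*\circ\psi_*\circ\rho_*=(r\circ\pi)_*\circ\lambda_*$ as homomorphisms $\pi_1(N)\to\Gamma$. In other words, under the $\pi_1$-identification furnished by $\lambda_*$, the two classifying maps $s\circ\rho$ and $r\circ\pi$ induce the same homomorphism to $\Gamma$; this is precisely the compatibility needed to feed $\lambda$ into the smooth result.

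Having arranged this, I would apply the birational invariance recorded in \eqref{BLWE} to the birational equivalence $\lambda:N\dashrightarrow M$ of smooth projective varieties, with classifying maps $s\circ\rho$ and $r\circ\pi$. This gives at once $(s\circ\rho)_*[\overline{\eth}_N]=(r\circ\pi)_*[\overline{\eth}_M]$ in $K_0(B\Gamma)$. Chaining this with the two identities of the first paragraph produces $s_*[\overline{\eth}^W_{\mathrm{rel}}]=(s\circ\rho)_*[\overline{\eth}_N]=(r\circ\pi)_*[\overline{\eth}_M]=r_*[\overline{\eth}^V_{\mathrm{rel}}]$, which is the claim.

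I expect the genuine content to lie entirely in the bookkeeping of the second paragraph, rather than in any analysis. The delicate point is verifying that $r\circ\pi$ and $s\circ\rho$ are classifying maps for the covers of $M$ and $N$ and that they are compatible with $\lambda_*$; this is exactly where the hypotheses that $\pi_*$ and $\rho_*$ are \emph{isomorphisms} on fundamental groups are indispensable. Without them the composites would fail to classify the universal covers of the resolutions, the reduction to the smooth statement \eqref{BLWE} would not be licit, and (as the projective-cone example in the preceding discussion shows) the conclusion could genuinely fail. Everything else is formal: functoriality of the K-homology pushforward, Theorem \ref{Kequalities}, and the cited smooth invariance.
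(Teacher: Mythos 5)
Your proposal is correct and follows essentially the same route as the paper: the paper packages your first two paragraphs into Lemma \ref{sTability2} (namely that $r\circ\pi$ is, up to homotopy, a classifying map $\ell$ for the universal cover of $M$, so that Theorem \ref{Kequalities} and functoriality give $r_*[\overline{\eth}^V_{\mathrm{rel}}]=\ell_*[\overline{\eth}_M]$, and symmetrically for $W$), and then concludes exactly as you do by applying the smooth birational invariance \eqref{BLWE} to $\lambda:N\dashrightarrow M$. Your explicit bookkeeping of the compatibility $s_*=r_*\circ\psi_*$ of the identifications with $\Gamma$, and the remark that the resolutions may be taken projective so that \eqref{BLWE} applies, are points the paper leaves implicit, but they refine rather than alter the argument.
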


\noindent
In order to prove this proposition we need the following lemma.

\begin{lemma}
\label{sTability2}
Let $V$ be a complex projective variety
with $\dim(\sing(V))=0$. Assume that there exists a resolution $\pi:M\rightarrow V$ such that $\pi_*:\pi_1(M)\rightarrow \pi_1(V)$ is an isomorphism.  
Set $\Gamma:= \pi_1 (V)$ and let  $\ell: M\rightarrow B\Gamma$ and $r: V\rightarrow B\Gamma$ be  classifying maps for 
$a:\widetilde{M}\rightarrow M$ and $b:\widetilde{V}\rightarrow V$, the universal coverings of $M$ and $V$ respectively. 
Then the following equality holds: 
$$\ell_*[\overline{\eth}_{M}]=r_*[\overline{\eth}^V_{\mathrm{rel}}] \quad\text{in}   \quad K_0(B\Gamma)\,.$$

\end{lemma}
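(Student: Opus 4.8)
The plan is to reduce the stated identity to a homotopy statement between the two maps $\ell$ and $r\circ\pi$ from $M$ to $B\Gamma$, and then to invoke the functoriality and the homotopy invariance of analytic $K$-homology together with Theorem \ref{Kequalities}. Since $K$-homology is covariant and the pushforward $f_*$ depends only on the homotopy class of $f$, it suffices to show that $\ell\simeq r\circ\pi$ as maps $M\to B\Gamma$. Granting this, one computes
$$\ell_*[\overline{\eth}_M]=(r\circ\pi)_*[\overline{\eth}_M]=r_*\big(\pi_*[\overline{\eth}_M]\big)=r_*[\overline{\eth}^V_{\mathrm{rel}}],$$
where the first equality is homotopy invariance, the second is functoriality $(r\circ\pi)_*=r_*\circ\pi_*$, and the third is precisely Theorem \ref{Kequalities}, applicable because $\dim(\sing(V))=0$.

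To establish the homotopy I would use that $B\Gamma$ is an Eilenberg--MacLane space $K(\Gamma,1)$, so that the free homotopy classes of maps $M\to B\Gamma$ are in bijection with the conjugacy classes of homomorphisms $\pi_1(M)\to\Gamma$; equivalently, a map $M\to B\Gamma$ is, up to homotopy, determined by the covering of $M$ that it classifies. On the one hand, $\ell$ is by hypothesis a classifying map for the universal covering $a\colon\widetilde{M}\to M$. On the other hand, since $r$ classifies $b\colon\widetilde{V}\to V$, the composite $r\circ\pi$ classifies the pulled-back covering $\pi^*\widetilde{V}\to M$. Thus the proof reduces to identifying $\pi^*\widetilde{V}$ with $\widetilde{M}$.

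This identification is the crux of the argument, and the only place where the hypothesis that $\pi_*\colon\pi_1(M)\to\pi_1(V)=\Gamma$ be an isomorphism is used. The covering $\pi^*\widetilde{V}\to M$ is a normal $\Gamma$-covering whose monodromy homomorphism is $\pi_*\colon\pi_1(M)\to\Gamma$, the monodromy of $\widetilde{V}\to V$ being the identity of $\Gamma$. Because $\pi_*$ is surjective the total space $\pi^*\widetilde{V}$ is connected, and because $\pi_*$ is injective the subgroup of $\pi_1(M)$ corresponding to this connected covering is $\ker(\pi_*)=\{1\}$; hence $\pi^*\widetilde{V}$ is simply connected and is therefore the universal covering of $M$. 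Consequently $r\circ\pi$ is also a classifying map for $\widetilde{M}\to M$, and by the uniqueness up to homotopy of classifying maps one concludes $\ell\simeq r\circ\pi$. Equivalently, one checks directly that $\ell_*=\pi_*=(r\circ\pi)_*$ as conjugacy classes of homomorphisms $\pi_1(M)\to\Gamma$. I expect this topological identification to be the main (if brief) obstacle, everything else being formal; it completes the reduction and hence the proof.
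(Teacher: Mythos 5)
Your proposal is correct and follows essentially the same route as the paper: both reduce the claim to the homotopy $\ell\simeq r\circ\pi$, establish it by showing that the pulled-back covering $\pi^*\widetilde{V}\to M$ is simply connected (hence the universal covering of $M$) using that $\pi_*$ is an isomorphism, and then conclude via functoriality of $K$-homology and Theorem \ref{Kequalities}. The only difference is cosmetic: the paper verifies simple connectivity of $\pi^*\widetilde{V}$ by an explicit loop-lifting computation and constructs the $\Gamma$-equivariant isomorphism $\widetilde{M}\cong\pi^*\widetilde{V}$ by hand, whereas you invoke the Galois correspondence and the classification of maps into $K(\Gamma,1)$ by conjugacy classes of homomorphisms.
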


\begin{proof}
We first remark  that  up to homotopy we have the equality 
$$
\ell=r\circ \pi .$$
This is a very classic result; since we could not find a quotable reference we are going to briefly
discuss its proof. We need to show that the pull-back of the universal bundle
 $E\Gamma\to B\Gamma$ by the two maps
  $r\circ \pi$ and $ \ell$ are isomorphic principal $\Gamma$-bundles over $M$.\\
Let $\pi^*\widetilde{V}$ be the pull back of $b:\widetilde{V}\rightarrow V$. First we point out  that $\pi^*\widetilde{V}$ is path-connected. Moreover, as $\pi_*:\pi_1(M)\rightarrow \pi_1(V)$ is an isomorphism, we have that   $\pi^*\widetilde{V}$ is a simply connected Galois covering of $M$; this means that it is, up to isomorphism, the universal covering of $M$. This latter property  is well known but we give a justification nevertheless. Let $\psi:\pi^*\widetilde{V}\rightarrow \widetilde{V}$ be the map defined by $\psi(x,z)=z$. Then $b\circ \psi=\pi\circ e$ where $e:\pi^*\widetilde{V}\rightarrow M$ is the covering map.  Let $x\in M$ and $y\in \pi^*\widetilde{V}$ with $e(y)=x$ be arbitrarily fixed. Consider $\pi_1(\pi^*\widetilde{V},y)$ and let $[\gamma]\in \pi_1(\pi^*\widetilde{V},y)$.  Then $b_*(\psi_*([\gamma]))=[0]$ as $\widetilde{V}$ is simply connected. Therefore $\pi_*(e_*([\gamma]))=[0]$. But this allows us to conclude that $[\gamma]=0$ as $\pi_*:\pi_1(M,x)\rightarrow \pi_1(V,\pi(x))$ is an isomorphism and $e_*:\pi_1(\pi^*\widetilde{V})\rightarrow \pi_1(M,x)$ is injective. So we showed that $\pi_1(\pi^*\widetilde{V},y)$ is trivial and thus, since $\pi^*\widetilde{V}$ is path-connected, we can conclude that $\pi^*\widetilde{V}$ is simply connected. Summarizing, we can deduce the existence of an isomorphism of coverings $\xi:\widetilde{M}\rightarrow \pi^*\widetilde{V}$. Moreover $\xi:\widetilde{M}\rightarrow \pi^*\widetilde{V}$ is equivariant with respect to the right action of $\Gamma$, that is, the monodromy action of the fundamental group, see for instance \cite{ManettiTop}. Hence $\xi:\widetilde{M}\rightarrow \pi^*\widetilde{V}$ is an isomorphism of $\Gamma$-principal bundles and  we can therefore 
conclude 
%
%
%
that $\pi \circ r=\ell$ up to homotopy.
We have proved in Th. \ref{Kequalities} that
$\pi_*[\overline{\eth}_M]=[\overline{\eth}_{\mathrm{rel}}]$. Thus  $r_* (\pi_* [\overline{\eth}_M])=r_* [\overline{\eth}^V_{\mathrm{rel}}]$ and since  in K-homology 
 $r_* \circ \pi_*= \ell_*$ we conclude that 
$\ell_*[\overline{\eth}_{M}]=r_*[\overline{\eth}^V_{\mathrm{rel}}]$ as required.

\end{proof}

\begin{proof}(of Prop. \ref{sTability}).  
We have already observed that $M$ and $N$ are birationally equivalent
through a birational map $\lambda: N\dashrightarrow M$ which is the composition of $\pi$, $\psi$ and a birational
inverse of $\rho$.  Let $\ell: M\to B\Gamma$ and $\kappa: N\to B\Gamma$ be the classifying maps
of the universal coverings of $M$ and $N$.
We then have, by the Lemma and by \eqref{BLWE},
$$s_*[\overline{\eth}^W_{\mathrm{rel}}]=
\kappa_* [\overline{\eth}_{N}]= 
\ell_* [\overline{\eth}_{M}]=
r_*[\overline{\eth}^V_{\mathrm{rel}}]$$
which is what we wanted to show.

\end{proof}

\begin{corollary}
Let $V$ and $W$ be as in Prop. \ref{sTability}.
Assume in addition that $V$ and $W$ have only rational singularities. Then 
 $$\{\langle \alpha, r_* {\rm Td}^{{\rm BFM}}_* (V) \rangle\,,\quad \alpha\in H^* (B\Gamma,\mathbb{Q})\}$$
 are birational invariants.
 \end{corollary}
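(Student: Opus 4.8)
The plan is to reduce the numerical statement to the equality of pushed-forward K-homology classes proved in Proposition~\ref{sTability}, after identifying $[\overline{\eth}_{\operatorname{rel}}]$ with the Baum-Fulton-MacPherson class in the rational case and then transporting everything into rational homology by means of the Chern character. First I would note that, since $V$ and $W$ have only rational singularities and $\dim(\sing(V))=\dim(\sing(W))=0$, Proposition~\ref{prop-equality-rational} applies to both of them and yields
$$
{\rm Td}^{{\rm BFM}}_K (V)=[\overline{\eth}^V_{\mathrm{rel}}]\quad\text{and}\quad {\rm Td}^{{\rm BFM}}_K (W)=[\overline{\eth}^W_{\mathrm{rel}}]
$$
through the isomorphism $K^{\operatorname{top}}_0\cong KK_0(C(\cdot),\mathbb{C})$. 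The hypotheses of Proposition~\ref{sTability} on the resolutions and the fundamental groups are exactly those assumed here, so its conclusion $s_*[\overline{\eth}^W_{\mathrm{rel}}]=r_*[\overline{\eth}^V_{\mathrm{rel}}]$ combines with the two identifications above to give
$$
r_*\big({\rm Td}^{{\rm BFM}}_K (V)\big)=s_*\big({\rm Td}^{{\rm BFM}}_K (W)\big)\quad\text{in}\quad K_0(B\Gamma).
$$

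Next I would apply the Chern character $\Ch_*\colon K_0(\cdot)\to H_*(\cdot,\mathbb{Q})$. Being a natural transformation, it commutes with the pushforwards $r_*$ and $s_*$, so that, using the defining relation ${\rm Td}^{{\rm BFM}}_* =\Ch_*\,{\rm Td}^{{\rm BFM}}_K$ recorded at the beginning of Section~5,
$$
r_*\big({\rm Td}^{{\rm BFM}}_* (V)\big)=\Ch_*\big(r_*\,{\rm Td}^{{\rm BFM}}_K (V)\big)\quad\text{and}\quad s_*\big({\rm Td}^{{\rm BFM}}_* (W)\big)=\Ch_*\big(s_*\,{\rm Td}^{{\rm BFM}}_K (W)\big).
$$
The displayed K-homology equality then forces
$$
r_*\big({\rm Td}^{{\rm BFM}}_* (V)\big)=s_*\big({\rm Td}^{{\rm BFM}}_* (W)\big)\quad\text{in}\quad H_*(B\Gamma,\mathbb{Q}),
$$
and pairing both sides with an arbitrary $\alpha\in H^*(B\Gamma,\mathbb{Q})$ yields $\langle\alpha,r_*{\rm Td}^{{\rm BFM}}_*(V)\rangle=\langle\alpha,s_*{\rm Td}^{{\rm BFM}}_*(W)\rangle$, which is precisely the asserted birational invariance.

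Since the argument is an assembly of the preceding results, I do not expect a genuine analytic obstacle; the points that require care are bookkeeping ones. One must verify that the rational singularities hypothesis makes simultaneously available both the hypotheses of Proposition~\ref{prop-equality-rational} (normality together with vanishing of the higher direct images, which is what delivers the identification of classes) and those of Proposition~\ref{sTability} (resolutions inducing isomorphisms on fundamental groups, which is what lets $V$ and $W$ share the common classifying space $B\Gamma$), and that the isomorphism $K^{\operatorname{top}}_0\cong KK_0(C(\cdot),\mathbb{C})$ is compatible with the pushforward maps $r_*$ and $s_*$. The only structural input beyond the cited propositions is the naturality of $\Ch_*$ with respect to continuous maps, which is standard.
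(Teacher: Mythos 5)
Your proof is correct and follows essentially the same route as the paper's own (first) argument: the paper likewise combines Proposition \ref{prop-equality-rational} with Proposition \ref{sTability} and the naturality of $\Ch_*$ to obtain $r_*\,{\rm Td}^{{\rm BFM}}_* (V)=s_*\,{\rm Td}^{{\rm BFM}}_* (W)$ in $H_* (B\Gamma,\mathbb{Q})$ and then pairs with $\alpha$; the only cosmetic difference is that you identify the classes at the K-homology level (via \eqref{chain2-ter}) and apply the Chern character at the end, whereas the paper invokes the homological identity \eqref{eq:equality-rational} directly. Be aware that the paper also records a second, purely topological proof of the same corollary, using \eqref{chain2-bis} together with \cite[Proposition 1.4]{WBlock}, which bypasses the analytic classes entirely.
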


 \begin{proof}
 We can either proceed analytically or topologically. In the first case we use Proposition \ref{prop-equality-rational},
 and more precisely \eqref{eq:equality-rational}, and Proposition \ref{sTability} in order to see that 
 $r_* {\rm Td}^{{\rm BFM}}_* (V)=s_* {\rm Td}^{{\rm BFM}}_* (W)$ in $H^* (B\Gamma,\mathbb{Q})$. Consequently, for any $\alpha\in H^* (B\pi_1(W),\mathbb{Q})$ we have $$\langle \alpha, r_* {\rm Td}^{{\rm BFM}}_* (V) \rangle= \langle \alpha, s_* {\rm Td}^{{\rm BFM}}_* (W) \rangle$$ as required.\\ We can also proceed without using analytic classes; indeed, from \eqref{chain2-bis}
 we know that $\pi_*  {\rm Td}_* (M)= {\rm Td}^{{\rm BFM}}_* (V)$ and similarly  $\rho_*  {\rm Td}_* (N)= {\rm Td}^{{\rm BFM}}_* (W)$. Let $z: N\to B\Gamma$ be a classifying map for
 the universal covering of $N$. As $\ell$ is homotopic to $r\circ \pi$ and $z$ is homotopic to $s\circ \rho$ we infer 
 from  \cite[Proposition 1.4]{WBlock} that $r_* {\rm Td}^{{\rm BFM}}_* (V)=s_* {\rm Td}^{{\rm BFM}}_* (W)$ in $H^* (B\Gamma,\mathbb{Q})$. Consequently we have again that for any $\alpha\in H^* (B\Gamma,\mathbb{Q})$  the equality $\langle \alpha, r_* {\rm Td}^{{\rm BFM}}_* (V) \rangle= \langle \alpha, s_* {\rm Td}^{{\rm BFM}}_* (W) \rangle$ holds. 
\end{proof}

As mentioned in the introduction examples of singular projective varieties admitting a resolution that induces an isomorphism between fundamental groups are for instance projective surfaces with only rational singularities and  projective varieties with log-terminal singularities, see for instance  \cite{Briesk} and  \cite{Takayama}, respectively. Other examples are provided by  complex projective varieties with quotient singularities, see \cite{Kollar}.

\bibliographystyle{plain}
\bibliography{atodd}

\begin{thebibliography}{10}

\bibitem{AlbinGell}
Pierre Albin and Jesse Gell-Redman.
\newblock The index formula for families of dirac type operators on
  pseudomanifolds.
\newblock https://arxiv.org/abs/1712.08513.

\bibitem{ALMP11}
Pierre Albin, Eric Leichtnam, Rafe Mazzeo, and Paolo Piazza.
\newblock The signature package on {W}itt spaces.
\newblock {\em Ann. Sci. \'Ec. Norm. Sup\'er. (4)}, 45(2):241--310, 2012.

\bibitem{ALMP13.2}
Pierre Albin, Eric Leichtnam, Rafe Mazzeo, and Paolo Piazza.
\newblock The {N}ovikov conjecture on {C}heeger spaces.
\newblock {\em J. Noncommut. Geom.}, 11(2):451--506, 2017.

\bibitem{ALMP13.1}
Pierre Albin, Eric Leichtnam, Rafe Mazzeo, and Paolo Piazza.
\newblock Hodge theory on {C}heeger spaces.
\newblock {\em J. Reine Angew. Math.}, 744:29--102, 2018.

\bibitem{Artin}
Michael Artin.
\newblock On isolated rational singularities of surfaces.
\newblock {\em Amer. J. Math.}, 88:129--136, 1966.

\bibitem{Baaj-Julg}
Saad Baaj and Pierre Julg.
\newblock Th{\'e}orie bivariante de {K}asparov et op{\'e}rateurs non born{\'e}s
  dans les {$C^*$}-modules hilbertiens.
\newblock {\em C. R. Math. Acad. Sci. Paris}, 296(21):875--878, 1983.

\bibitem{compactcomplexsurfaces}
Wolf~P. Barth, Klaus Hulek, Chris A.~M. Peters, and Antonius Van~de Ven.
\newblock {\em Compact complex surfaces}, volume~4 of {\em Ergebnisse der
  Mathematik und ihrer Grenzgebiete. 3. Folge. A Series of Modern Surveys in
  Mathematics [Results in Mathematics and Related Areas. 3rd Series. A Series
  of Modern Surveys in Mathematics]}.
\newblock Springer-Verlag, Berlin, second edition, 2004.

\bibitem{BFMI}
Paul Baum, William Fulton, and Robert MacPherson.
\newblock Riemann-{R}och for singular varieties.
\newblock {\em Publ. Math. Inst. Hautes \'Etudes Sci.}, (45):101--145, 1975.

\bibitem{BFMII}
Paul Baum, William Fulton, and Robert MacPherson.
\newblock Riemann-{R}och and topological {$K$}\ theory for singular varieties.
\newblock {\em Acta Math.}, 143(3-4):155--192, 1979.

\bibitem{BaHiSc}
Paul Baum, Nigel Higson, and Thomas Schick.
\newblock On the equivalence of geometric and analytic {$K$}-homology.
\newblock {\em Pure Appl. Math. Q.}, 3(1, Special Issue: In honor of Robert D.
  MacPherson. Part 3):1--24, 2007.

\bibitem{Bei2017}
Francesco Bei.
\newblock On the {L}aplace-{B}eltrami operator on compact complex spaces.
\newblock arxiv.org/abs/1706.05962. To appear on {\em Trans. Amer. Math. Soc.}

\bibitem{FBJTA}
Francesco Bei.
\newblock On the {$L^2$}-{P}oincar\'e duality for incomplete {R}iemannian
  manifolds: a general construction with applications.
\newblock {\em J. Topol. Anal.}, 8(1):151--186, 2016.

\bibitem{FraBei}
Francesco Bei.
\newblock Degenerating {H}ermitian metrics and spectral geometry of the
  canonical bundle.
\newblock {\em Adv. Math.}, 328:750--800, 2018.

\bibitem{SympBei}
Francesco Bei.
\newblock Symplectic manifolds, {$L^p$}-cohomology and {$q$}-parabolicity.
\newblock {\em Differential Geom. Appl.}, 64:136--157, 2019.

\bibitem{BeiPiazza}
Francesco Bei and Paolo Piazza.
\newblock On the {$L^2$}-{$\overline{\partial}$}-cohomology of certain complete
  {K}\"{a}hler metrics.
\newblock {\em Math. Z.}, 290(1-2):521--537, 2018.

\bibitem{BieMil}
Edward Bierstone and Pierre~D. Milman.
\newblock Canonical desingularization in characteristic zero by blowing up the
  maximum strata of a local invariant.
\newblock {\em Invent. Math.}, 128(2):207--302, 1997.

\bibitem{WBlock}
Jonathan Block and Shmuel Weinberger.
\newblock Higher {T}odd classes and holomorphic group actions.
\newblock {\em Pure Appl. Math. Q.}, 2(4, Special Issue: In honor of Robert D.
  MacPherson. Part 2):1237--1253, 2006.

\bibitem{BraSchYou}
Jean-Paul Brasselet, J\"{o}rg Sch\"{u}rmann, and Shoji Yokura.
\newblock Hirzebruch classes and motivic {C}hern classes for singular spaces.
\newblock {\em J. Topol. Anal.}, 2(1):1--55, 2010.

\bibitem{Briesk}
Egbert Brieskorn.
\newblock Rationale {S}ingularit\"aten komplexer {F}l\"achen.
\newblock {\em Invent. Math.}, 4:336--358, 1967/1968.

\bibitem{BruLe}
J.~Br{\"u}ning and M.~Lesch.
\newblock Hilbert complexes.
\newblock {\em J. Funct. Anal.}, 108(1):88--132, 1992.

\bibitem{BPScurves}
Jochen Br\"uning, Norbert Peyerimhoff, and Herbert Schr\"oder.
\newblock The {$\overline\partial$}-operator on algebraic curves.
\newblock {\em Comm. Math. Phys.}, 129(3):525--534, 1990.

\bibitem{Cheeger:Spec}
Jeff Cheeger.
\newblock Spectral geometry of singular {R}iemannian spaces.
\newblock {\em J. Differential Geom.}, 18(4):575--657 (1984), 1983.

\bibitem{Cheltsov}
I.~A. Chelt'sov.
\newblock Del {P}ezzo surfaces with nonrational singularities.
\newblock {\em Mat. Zametki}, 62(3):451--467, 1997.

\bibitem{Esnault}
H\'el\`ene Esnault and Eckart Viehweg.
\newblock Logarithmic de {R}ham complexes and vanishing theorems.
\newblock {\em Invent. Math.}, 86(1):161--194, 1986.

\bibitem{Fischer}
Gerd Fischer.
\newblock {\em Complex analytic geometry}.
\newblock Lecture Notes in Mathematics, Vol. 538. Springer-Verlag, Berlin-New
  York, 1976.

\bibitem{FoxHaskellHodge}
Jeffrey Fox and Peter Haskell.
\newblock Hodge decompositions and {D}olbeault complexes on normal surfaces.
\newblock {\em Trans. Amer. Math. Soc.}, 343(2):765--778, 1994.

\bibitem{FoxHaskellTodd}
Jeffrey Fox and Peter Haskell.
\newblock Perturbed {D}olbeault operators and the homology {T}odd class.
\newblock {\em Proc. Amer. Math. Soc.}, 128(12):3715--3721, 2000.

\bibitem{FultonLibro}
William Fulton.
\newblock {\em Intersection theory}, volume~2 of {\em Ergebnisse der Mathematik
  und ihrer Grenzgebiete. 3. Folge. A Series of Modern Surveys in Mathematics
  [Results in Mathematics and Related Areas. 3rd Series. A Series of Modern
  Surveys in Mathematics]}.
\newblock Springer-Verlag, Berlin, second edition, 1998.

\bibitem{MellesMilmanPacific}
Caroline Grant and Pierre Milman.
\newblock Metrics for singular analytic spaces.
\newblock {\em Pacific J. Math.}, 168(1):61--156, 1995.

\bibitem{MellesMilmanToulouse}
Caroline Grant~Melles and Pierre Milman.
\newblock Classical {P}oincar\'e metric pulled back off singularities using a
  {C}how-type theorem and desingularization.
\newblock {\em Ann. Fac. Sci. Toulouse Math. (6)}, 15(4):689--771, 2006.

\bibitem{GraRe}
Hans Grauert and Reinhold Remmert.
\newblock {\em Coherent analytic sheaves}, volume 265 of {\em Grundlehren der
  Mathematischen Wissenschaften [Fundamental Principles of Mathematical
  Sciences]}.
\newblock Springer-Verlag, Berlin, 1984.

\bibitem{SingDef}
G.-M. Greuel, C.~Lossen, and E.~Shustin.
\newblock {\em Introduction to singularities and deformations}.
\newblock Springer Monographs in Mathematics. Springer, Berlin, 2007.

\bibitem{GrieserLesch}
Daniel Grieser and Matthias Lesch.
\newblock On the {$L^2$}-{S}tokes theorem and {H}odge theory for singular
  algebraic varieties.
\newblock {\em Math. Nachr.}, 246/247:68--82, 2002.

\bibitem{Griffiths-Harris}
Phillip Griffiths and Joseph Harris.
\newblock {\em Principles of algebraic geometry}.
\newblock Wiley Classics Library. John Wiley \& Sons, Inc., New York, 1994.
\newblock Reprint of the 1978 original.

\bibitem{HaskellIndex}
Peter Haskell.
\newblock Index theory on curves.
\newblock {\em Trans. Amer. Math. Soc.}, 288(2):591--604, 1985.

\bibitem{Haskell-K}
Peter Haskell.
\newblock Index theory of geometric {F}redholm operators on varieties with
  isolated singularities.
\newblock {\em $K$-Theory}, 1(5):457--466, 1987.

\bibitem{Hilsumproj}
Michel Hilsum.
\newblock Une preuve analytique de la conjecture de {J}.{ R}osenberg.
\newblock https://hal.archives-ouvertes.fr/hal-01841905v1.

\bibitem{Hilsum-LNM}
Michel Hilsum.
\newblock Signature operator on {L}ipschitz manifolds and unbounded {K}asparov
  bimodules.
\newblock In {\em Operator algebras and their connections with topology and
  ergodic theory ({B}uteni, 1983)}, volume 1132 of {\em Lecture Notes in
  Math.}, pages 254--288. Springer, Berlin, 1985.

\bibitem{Hiro}
Heisuke Hironaka.
\newblock Resolution of singularities of an algebraic variety over a field of
  characteristic zero. {I}, {II}.
\newblock {\em Ann. of Math. (2) {\bf 79} (1964), 109--203; ibid. (2)},
  79:205--326, 1964.

\bibitem{Kasparov}
G.~G. Kasparov.
\newblock Topological invariants of elliptic operators. {I}. {$K$}-homology.
\newblock {\em Izv. Akad. Nauk SSSR Ser. Mat.}, 39(4):796--838, 1975.

\bibitem{Kasparov-inventiones}
Gennadi Kasparov.
\newblock Equivariant {KK}-theory and the {N}ovikov conjecture.
\newblock {\em Invent. Math.}, 91:147--201, 1988.

\bibitem{Kollar}
J\'{a}nos Koll\'{a}r.
\newblock Shafarevich maps and plurigenera of algebraic varieties.
\newblock {\em Invent. Math.}, 113(1):177--215, 1993.

\bibitem{JKol}
J\'anos Koll\'ar.
\newblock {\em Singularities of the minimal model program}, volume 200 of {\em
  Cambridge Tracts in Mathematics}.
\newblock Cambridge University Press, Cambridge, 2013.
\newblock With a collaboration of S\'andor Kov\'acs.

\bibitem{KoMo}
J\'anos Koll\'ar and Shigefumi Mori.
\newblock {\em Birational geometry of algebraic varieties}, volume 134 of {\em
  Cambridge Tracts in Mathematics}.
\newblock Cambridge University Press, Cambridge, 1998.
\newblock With the collaboration of C. H. Clemens and A. Corti, Translated from
  the 1998 Japanese original.

\bibitem{Laufer}
Henry~B. Laufer.
\newblock On rational singularities.
\newblock {\em Amer. J. Math.}, 94:597--608, 1972.

\bibitem{LawMich}
H.~Blaine Lawson, Jr. and Marie-Louise Michelsohn.
\newblock {\em Spin geometry}, volume~38 of {\em Princeton Mathematical
  Series}.
\newblock Princeton University Press, Princeton, NJ, 1989.

\bibitem{Roni}
Roni~N. Levy.
\newblock The {R}iemann-{R}och theorem for complex spaces.
\newblock {\em Acta Math.}, 158(3-4):149--188, 1987.

\bibitem{LiTian}
Peter Li and Gang Tian.
\newblock On the heat kernel of the {B}ergmann metric on algebraic varieties.
\newblock {\em J. Amer. Math. Soc.}, 8(4):857--877, 1995.

\bibitem{Lott}
John Lott.
\newblock A {D}olbeault-{H}ilbert complex for a variety with isolated singular
  points.
\newblock arxiv.org/abs/1904.07744. To appear on {\em Ann. K-Theory}.

\bibitem{ManettiTop}
Marco Manetti.
\newblock {\em Topology}, volume~91 of {\em Unitext}.
\newblock Springer, Cham, 2015.
\newblock Translated from the 2014 Italian edition by Simon G. Chiossi, La
  Matematica per il 3+2.

\bibitem{Nagase}
Masayoshi Nagase.
\newblock Remarks on the {$L^2$}-{D}olbeault cohomology groups of singular
  algebraic surfaces and curves.
\newblock {\em Publ. Res. Inst. Math. Sci.}, 26(5):867--883, 1990.

\bibitem{Takeo}
Takeo Ohsawa.
\newblock {\em {$L^2$} approaches in several complex variables}.
\newblock Springer Monographs in Mathematics. Springer, Tokyo, 2015.
\newblock Development of Oka-Cartan theory by $L^2$ estimates for the
  $\overline{\partial}$ operator.

\bibitem{OvRu}
N.~{\O}vrelid and J.~Ruppenthal.
\newblock {$L^2$}-properties of the {$\overline{\partial}$} and the
  {$\overline{\partial}$}-{N}eumann operator on spaces with isolated
  singularities.
\newblock {\em Math. Ann.}, 359(3-4):803--838, 2014.

\bibitem{OvreVaAJM}
Nils {\O}vrelid and Sophia Vassiliadou.
\newblock Some {$L^2$} results for {$\overline\partial$} on projective
  varieties with general singularities.
\newblock {\em Amer. J. Math.}, 131(1):129--151, 2009.

\bibitem{OvreVaIn}
Nils {\O}vrelid and Sophia Vassiliadou.
\newblock {$L^2$}-{$\overline\partial$}-cohomology groups of some singular
  complex spaces.
\newblock {\em Invent. Math.}, 192(2):413--458, 2013.

\bibitem{PSCrelle}
William Pardon and Mark Stern.
\newblock Pure {H}odge structure on the {$L_2$}-cohomology of varieties with
  isolated singularities.
\newblock {\em J. Reine Angew. Math.}, 533:55--80, 2001.

\bibitem{Pardon}
William~L. Pardon.
\newblock The {$L_2$}-{$\overline\partial$}-cohomology of an algebraic surface.
\newblock {\em Topology}, 28(2):171--195, 1989.

\bibitem{PardonSternJAMS}
William~L. Pardon and Mark~A. Stern.
\newblock {$L^2\text{--}\overline\partial$}-cohomology of complex projective
  varieties.
\newblock {\em J. Amer. Math. Soc.}, 4(3):603--621, 1991.

\bibitem{PiZe}
Paolo Piazza and Vito~Felice Zenobi.
\newblock Singular spaces, groupoids and metrics of positive scalar curvature.
\newblock {\em J. Geom. Phys.}, 137:87--123, 2019.

\bibitem{Miles}
Miles Reid.
\newblock Canonical {$3$}-folds.
\newblock In {\em Journ\'ees de {G}\'eometrie {A}lg\'ebrique d'{A}ngers,
  {J}uillet 1979/{A}lgebraic {G}eometry, {A}ngers, 1979}, pages 273--310.
  Sijthoff \&\ Noordhoff, Alphen aan den Rijn---Germantown, Md., 1980.

\bibitem{RosenbergPSC}
Jonathan Rosenberg.
\newblock {$C^{\ast} $}-algebras, positive scalar curvature, and the {N}ovikov
  conjecture.
\newblock {\em Publ. Math. Inst. Hautes \'Etudes Sci.}, (58):197--212 (1984),
  1983.

\bibitem{RosenbergPSCIII}
Jonathan Rosenberg.
\newblock {$C^\ast$}-algebras, positive scalar curvature, and the {N}ovikov
  conjecture. {III}.
\newblock {\em Topology}, 25(3):319--336, 1986.

\bibitem{rosenberg-tams}
Jonathan Rosenberg.
\newblock An analogue of the {N}ovikov conjecture in complex algebraic
  geometry.
\newblock {\em Trans. Amer. Math. Soc.}, 360(1):383--394, 2008.

\bibitem{RUJFA}
J.~Ruppenthal.
\newblock Compactness of the {$\overline\partial$}-{N}eumann operator on
  singular complex spaces.
\newblock {\em J. Funct. Anal.}, 260(11):3363--3403, 2011.

\bibitem{JRupp}
J.~Ruppenthal.
\newblock {$L^2$}-theory for the {$\overline\partial$}-operator on compact
  complex spaces.
\newblock {\em Duke Math. J.}, 163(15):2887--2934, 2014.

\bibitem{RUIMRN}
J.~Ruppenthal.
\newblock {$L^2$}-{S}erre duality on singular complex spaces and rational
  singularities.
\newblock {\em Int. Math. Res. Not. IMRN}, (23):7198--7240, 2018.

\bibitem{Saperisolated}
Leslie Saper.
\newblock {$L_2$}-cohomology of {K}\"ahler varieties with isolated
  singularities.
\newblock {\em J. Differential Geom.}, 36(1):89--161, 1992.

\bibitem{SiuYaucompact}
Yum~Tong Siu and Shing~Tung Yau.
\newblock Compactification of negatively curved complete {K}\"ahler manifolds
  of finite volume.
\newblock In {\em Seminar on {D}ifferential {G}eometry}, volume 102 of {\em
  Ann. of Math. Stud.}, pages 363--380. Princeton Univ. Press, Princeton, N.J.,
  1982.

\bibitem{Takayama}
Shigeharu Takayama.
\newblock Local simple connectedness of resolutions of log-terminal
  singularities.
\newblock {\em Internat. J. Math.}, 14(8):825--836, 2003.

\bibitem{Zucker}
Steven Zucker.
\newblock Hodge theory with degenerating coefficients. {$L_{2}$}\ cohomology in
  the {P}oincar\'e metric.
\newblock {\em Ann. of Math. (2)}, 109(3):415--476, 1979.

\end{thebibliography}

\end{document}